\newcommand{\trdeg}{\operatorname{tr.deg}}
\newcommand{\bpr}{\begin{proof}}
\newcommand{\epr}{\end{proof}}
\newcommand{\spec}{\operatorname{Spec}}
\newcommand{\hspec}{\operatorname{hSpec}}
\newcommand{\mc}{\mathcal}
\newcommand{\mf}{\mathfrak}
\newcommand{\cha}{\operatorname{char}}
\newcommand{\mb}{\mathbb}
\newcommand{\dra}{\dashrightarrow}
\newcommand{\GK}{\operatorname{GK}}
\newcommand{\wt}{\widetilde}
\newcommand{\aut}{\operatorname{Aut}}
\DeclareMathOperator{\HB}{H}
\newcommand{\ord}{countable-avoiding}
\newcommand{\hord}{countable-avoiding}
\newcommand{\cl}{\operatorname{Cl}}
\newcommand{\naive}{na{\"\i}ve}
\newcommand{\beq}{\begin{equation}}
\newcommand{\eeq}{\end{equation}}
\newcommand{\Aut}{{\rm Aut}}
\newcommand{\bbar}[1]{\overline{#1}}
\renewcommand{\Lsh}{\mathcal{L}}
\newcommand{\ZZ}{{\mathbb Z}}
\newcommand{\PP}{{\mathbb P}}
 \DeclareMathOperator{\Pic}{Pic}
\DeclareMathOperator{\pic}{Pic}
 \DeclareMathOperator{\Ker}{Ker}
\numberwithin{equation}{section}
 \theoremstyle{plain}
\newtheorem{theorem}[equation]{Theorem}
\newtheorem{lemma}[equation]{Lemma}
\newtheorem{corollary}[equation]{Corollary}
\newtheorem{proposition}[equation]{Proposition}
\theoremstyle{definition}
\newtheorem{question}[equation]{Question}
\newtheorem{definition}[equation]{Definition}
\newtheorem{notation}[equation]{Notation}
\newtheorem{remark}[equation]{Remark}
\newtheorem{standing-hypothesis}[equation]{Standing Hypothesis}
\newtheorem{example}[equation]{Example}
\newtheorem{conjecture}[equation]{Conjecture}
\begin{document}
\title{The Dixmier-Moeglin equivalence for twisted homogeneous coordinate rings}
\author{J. Bell, D. Rogalski, and S. J.  Sierra}

\address{(Bell)
Department of Mathematics, Simon Fraser University, 8888 University
Drive, Burnaby, BC,  V5A 1S6, Canada.} \email{jpb@math.sfu.ca}

\address{(Rogalski)
Department of Mathematics, University of California San Diego, La
Jolla, CA 92093-0112, USA.} \email{drogalsk@math.ucsd.edu}

\address{(Sierra)
Department of Mathematics, University of Washington, Box 354350,
Seattle, WA 98195-4350, USA.} \email{sjsierra@math.washington.edu}

\thanks{The first author was partially supported by NSERC through grant
611456, the second author was partially supported by the NSF through
grant DMS-0600834, and the third author was partially supported by
the NSF through grant DMS-0802935.}

\keywords{graded ring, noncommutative projective geometry, twisted
homogeneous coordinate ring, primitive ring, Dixmier-Moeglin
equivalence}

\subjclass[2000]{14A22, 14J50, 16D60, 16S36, 16S38, 16W50}

\begin{abstract}
Given a projective scheme $X$ over a field $k$, an automorphism
$\sigma: X \to X$, and a $\sigma$-ample invertible sheaf $\mc{L}$,
one may form the twisted homogeneous coordinate ring $B = B(X,
\mc{L}, \sigma)$, one of the most fundamental constructions in
noncommutative projective algebraic geometry.  We study the
primitive spectrum of $B$, as well as that of other closely related
algebras such as skew and skew-Laurent extensions of commutative
algebras. Over an algebraically closed, uncountable field $k$ of
characteristic zero, we prove that that the primitive ideals of $B$
are characterized by the usual Dixmier-Moeglin conditions whenever
$\dim X \leq 2$.
\end{abstract}

\date{\today}

\maketitle

\section{Introduction}
Let $A$ be an algebra over a base field $k$. To better understand
$A$, one of the basic properties in which one is interested is the
structure of the simple right $A$-modules. Often an explicit
understanding of the simple modules is difficult, and one settles
for less exact information: one aims instead to understand the
primitive ideals of $A$, that is, the possible annihilators of the
simple modules.  For this reason, questions about primitive ideals
are clearly fundamental, and there is a long history in
noncommutative ring theory of their study.

One of the seminal achievements in the subject is the work of
Dixmier and Moeglin on the characterization of the primitive
spectrum of the universal enveloping algebra $\mf{U}(L)$ of a
finite-dimensional Lie algebra $L$ over $\mb{C}$. This work showed
the that the primitive ideals can be recognized among all prime
ideals in the prime spectrum $\spec \mf{U}(L)$ in two different
ways.  A prime ideal of an algebra $A$ is called \emph{locally
closed} if $\{ P \}$ is an open subset of its closure in the Zariski
topology on $\spec A$. Assuming $A$ is also Goldie, the prime P is
called \emph{rational} if the center of the Goldie quotient ring
$Q(A/P)$ is an algebraic extension of $k$.  If $L$ is as above and
 $A = \mf{U}(L)$, then a prime $P$ of $A$ is primitive if and only if
it is locally closed, if and only if it is rational. Thus any Goldie
algebra $A$ for which both of these characterizations hold is said
to satisfy the \emph{Dixmier-Moeglin (DM)-equivalence.}

More recent research has extended the work on enveloping algebras in
two directions.  First, there are results which show that some of the
implications among the properties of being primitive, rational, and
locally closed (as well as some other related properties) hold
 under quite general hypotheses on an algebra $A$.
The paper \cite{RoSm} has a good review of what general implications
are currently known; we also review in Section~\ref{prim-primes-sec}
below some of the ones we need. Second, many other interesting
special classes of rings have been shown to satisfy the
DM-equivalence.  For example, the DM-equivalence holds for the most
familiar quantized coordinate rings in the theory of quantum groups
\cite[Corollary II.8.5]{BG}. Typically, in such classes of examples
where the DM-equivalence has been proved, there is some connection
of the algebras in question to geometry (in the quantized coordinate
ring case, there is a torus action.)

In this paper, we consider an important class of graded rings with a
strong connection to projective algebraic geometry, and study the
DM-equivalence for them; in particular we prove that the
DM-equivalence always holds in small-dimensional cases. The main
examples of interest are twisted homogeneous coordinate rings, which
are fundamental in the theory of noncommutative projective geometry.
Given a projective $k$-variety $X$, an invertible sheaf $\mc{L}$ on
$X$ and an automorphism $\sigma: X \to X$, one defines from this
data the \emph{twisted homogeneous coordinate ring} $B(X, \mc{L},
\sigma)$, as follows. Set $\mc{L}_0 = \mc{O}_X$ and for each $n \geq
1$, put $\mc{L}_n = \mc{L} \otimes \sigma^* \mc{L} \otimes \dots
\otimes (\sigma^{n-1})^* \mc{L}$. Then $B(X, \mc{L}, \sigma) =
\bigoplus_{n \geq 0} \HB^0(X, \mc{L}_n)$ is a naturally an
$\mb{N}$-graded $k$-algebra under the multiplication $f \star g = f
\cdot (\sigma^m)^*g$, where $f \in B_m$, $g \in B_n$. We only make
this construction when $\mc{L}$ is \emph{$\sigma$-ample}, an
additional technical condition which we will define in the body of
the paper. This condition ensures, among other things, that $B$ is
noetherian. See \cite{AV} and \cite{Ke1} for more background on
these rings.

In fact, our methods for studying primitivity will apply not just to
the rings $B = B(X, \mc{L}, \sigma)$ but also to several other
graded algebras.  One of the fundamental properties of the ring $B$,
in case $X$ is integral, is that its graded quotient ring has the
form $Q_{\rm gr}(B) = k(X)[t, t^{-1}; \sigma]$, where $k(X)$ is the
field of rational functions on $X$, and $\sigma: k(X) \to k(X)$ is
the induced automorphism.  We call any $\mb{Z}$-graded Ore domain
$A$ where $(Q_{\rm gr}(A))_0$ is a field \emph{birationally
commutative}, and in general one expects the properties of such
rings to be closely connected to commutative geometry. In
particular, this class also includes the skew and skew-Laurent
extensions $R[t; \sigma]$ and $R[t, t^{-1}; \sigma]$, where $R$ is a
commutative domain, finitely generated over $k$, and $\sigma: R \to
R$ is an automorphism.  The primitivity of skew and skew-Laurent
extensions has been studied by several authors, but our methods put
these examples in an interesting new context.

We are now ready to state our main theorem.
\begin{theorem}
\label{main-thm} {\rm [Theorem~\ref{main-DM-thm}]} Let $k$ be an
uncountable algebraically closed field of characteristic $0$.
\begin{enumerate}
\item Let $X$ be a projective variety over $k$ with automorphism $\sigma: X \to
X$ and $\sigma$-ample invertible sheaf $\mc{L}$.  If $\dim X \leq
2$, then $B(X, \mc{L}, \sigma)$ satisfies the Dixmier-Moeglin
equivalence.
\item Let $S$ be a commutative finitely generated $k$-algebra,
with automorphism $\sigma: S \to S$.  Let $A$ be either of the
algebras $S[t; \sigma]$ or $S[t, t^{-1}; \sigma]$.  If $\dim S \leq
2$ and $\GK A < \infty$, then $A$ satisfies the Dixmier-Moeglin
equivalence.
\end{enumerate}
\end{theorem}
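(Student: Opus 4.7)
The plan is to verify the DM-equivalence by establishing, separately, the three implications for each prime $P$: locally closed $\Rightarrow$ primitive, primitive $\Rightarrow$ rational, and rational $\Rightarrow$ locally closed. For our algebras---noetherian, prime, finitely generated, and of finite Gelfand--Kirillov dimension over an uncountable algebraically closed field $k$---the first two implications should be routine consequences of the general machinery reviewed in Section~\ref{prim-primes-sec} and developed in \cite{RoSm}, since such algebras are automatically Jacobson and of countable dimension over $k$. The substantive content of the theorem is therefore the implication rational $\Rightarrow$ locally closed.

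To attack this implication, I would first pass to $B/P$ (respectively $A/P$) to reduce to the following: if $B$ is prime with $Z(Q(B)) = k$, then $(0)$ is locally closed in $\spec B$. Because $B$ is birationally commutative, $Q_{\rm gr}(B) = K[t,t^{-1};\sigma]$ for some field $K$ of transcendence degree at most $\dim X$ over $k$, and rationality translates to $K^\sigma = k$. The prime ideals of $B$ are classified, up to mild finite subtleties, by the $\sigma$-invariant integral subschemes of $X$; local closure of $(0)$ thus reduces to the geometric assertion that $X$ carries only finitely many maximal proper $\sigma$-invariant integral subschemes.

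The geometric core is then: if $\sigma\colon X \to X$ is an automorphism of a projective variety with $\dim X \leq 2$ and $k(X)^\sigma = k$, then only finitely many integral $\sigma$-invariant divisors exist on $X$. The case $\dim X = 1$ is elementary. For $\dim X = 2$ one invokes the classification of surface automorphisms: on a suitable minimal model, $\sigma^*$ on $\NS(X)$ either has finite order, preserves an elliptic or rational fibration, or has positive topological entropy on a K3, Enriques, or abelian surface. The first two cases yield to direct analysis of the (finitely many) $\sigma$-invariant divisor classes together with the fibration structure. In the positive-entropy case the set of $\sigma$-periodic curves is a priori only countable, and the hypothesis $k(X)^\sigma = k$ together with the uncountability of $k$ (the countable-avoiding philosophy encoded in the preamble) must be deployed to rule out an infinite family of such curves. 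This positive-entropy case is where I expect the main technical difficulty to lie.

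Part (2) is run in parallel, with $Y = \spec S$ and $\sigma$-invariant irreducible closed subsets of $Y$ playing the role of $\sigma$-invariant integral subschemes of $X$. The additional hypothesis $\GK A < \infty$ in the skew-polynomial cases is crucial because it forces bounded growth on $\sigma$ (via its induced action on the Picard and Albanese data of a compactification of $Y$), ruling out positive-entropy behavior and reducing to cases close to the finite-order-on-$\NS$ regime. Assembling these ingredients gives the theorem, the principal obstacle being the positive-entropy surface case in (1).
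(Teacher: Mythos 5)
Your overall reduction---DM-equivalence comes down to \emph{rational} $\Rightarrow$ \emph{locally closed}, which in turn comes down to a finiteness statement about the $\sigma$-invariant geometry of $X$---has the right shape (the paper packages it as the countable-avoiding dichotomy of Lemma~\ref{height-one-lem}: the only possible obstruction is a prime factor with \emph{countably infinitely} many height one primes). But your stated geometric core, ``if $k(X)^\sigma=k$ and $\dim X\le 2$ then only finitely many integral $\sigma$-invariant divisors exist,'' is not the statement you need, and this is a genuine gap. The height one primes of $B(X,\mc{L},\sigma)$ (and of $S[t,t^{-1};\sigma]$) correspond to the \emph{maximal $\sigma$-irreducible closed subsets} of $X$ (Lemma~\ref{thcr-primes-lem}), and on a surface these can be zero-dimensional: a periodic orbit lying on no periodic curve is already maximal $\sigma$-irreducible and contributes a height one prime. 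This is exactly the failure mode in the known counterexamples (Lorenz's example, Henon maps on $\mb{A}^2$ as in Theorem~\ref{A2-case-thm}): there are \emph{no} periodic curves at all, yet countably infinitely many periodic orbits, so $(0)$ is rational and primitive but not locally closed. Controlling invariant divisors therefore proves nothing by itself; you must also show that the periodic points not lying on periodic curves form a closed, hence finite, set. That is the actual content of Theorem~\ref{dim2-ord-thm}: after Diller--Favre reduces finite growth type to (a) a power of $\sigma$ numerically trivial, or (b) $\sigma$ preserving a rational or elliptic fibration, case (a) is handled by showing $\sigma$ lies in an algebraic group of automorphisms (Dolgachev), so the dense-orbit locus is open because orbits of algebraic group actions are locally closed, and case (b) by observing that all periodic points and curves lie over the finitely many periodic points of the base curve.

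Two secondary points. First, you locate the ``main technical difficulty'' of part (1) in the positive-entropy case, but that case never arises: $\sigma$-ampleness of $\mc{L}$ forces $\sigma$ to be quasi-unipotent (Keeler), i.e.\ of finite growth type, exactly as $\GK A<\infty$ does in part (2); the hard work lies entirely within the finite-growth-type regime. Second, even the divisor statement you do need (finiteness of $\sigma$-periodic divisors in the absence of invariant rational functions) is proved in the paper not by surface classification but by a general argument in all dimensions: a descent to a finitely generated base field combined with finite generation of the divisor class group (Theorem~\ref{inv-divisor-thm}), together with a separate linear-algebra argument over $k(X)$ showing that uncountably many maximal $\sigma$-irreducibles already force an invariant rational function (Theorem~\ref{unc-irr-thm}). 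Your proposal would need substitutes for both of these, in addition to repairing the main gap above.
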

We remark that part (2) of the theorem is false without the
assumption that $\GK A < \infty$, as several well-known examples
which fail the DM-equivalence are skew-Laurent rings of this type.
Also, the assumption that $\mc{L}$ is $\sigma$-ample in part (1)
forces $\GK B < \infty$, whereas twisted homogeneous coordinate
rings over non-$\sigma$-ample sheaves typically have exponential
growth. Thus the two parts of the theorem are more analogous than
they at first appear.  We also see no obstruction to this theorem
being true in all dimensions, and we conjecture that this is so. Our
current methods are highly dependent, however, on results specific
to the surface case.

Now we begin to describe the structure of the paper, and the methods
we employ, several of which are interesting in themselves. In
Section~\ref{prim-primes-sec}, we review some well-known results and
show that in the special case of a birationally commutative
$\mb{Z}$-graded algebra $A$, to prove the Dixmier-Moeglin
equivalence it often suffices to examine the \emph{homogeneous}
prime spectrum. Specifically, under mild extra assumptions
(including the uncountability of $k$), the DM-equivalence will hold
for $A$ unless $A$ has a prime graded factor algebra whose number of
homogeneous height one primes is countably infinite.  We then
introduce in Section~\ref{examples-sec} the main examples $A$ we
will consider, and show that in each case there is a
quasi-projective scheme $X$, with automorphism $\sigma: X \to X$,
such that $(X, \sigma)$ encodes all of the information about the
homogeneous primes of the algebra.   For $A = B(X, \mc{L}, \sigma)$,
for example, the homogeneous primes of $A$ are in one-to-one
inclusion-reversing correspondence with \emph{$\sigma$-irreducible}
closed subsets $Z$ of $X$---the closed $\sigma$-invariant subsets
$Z$ such that $\sigma$ acts as a cycle on the irreducible components
of $Z$. Then if $X$ is integral, the homogeneous height one primes
of $A$ correspond to the \emph{maximal $\sigma$-irreducible} closed
subsets, in other words, the maximal elements among the set of all
proper $\sigma$-irreducible subsets of $X$.   A similar
correspondence holds for the skew and skew-Laurent extensions as in
Theorem~\ref{main-thm}(2), taking $X = \spec S$.

In Sections~\ref{orbital-char-sec} and \ref{base-ext-sec}, we study
the possible structure of the maximal $\sigma$-irreducible subsets
of a variety with automorphism $\sigma$, and prove the following
very general geometric result, which is of independent interest.
\begin{theorem}
\label{main-geom-thm} {\rm [Theorem~\ref{unc-irr-thm},
Theorem~\ref{inv-divisor-thm}]} Let $k$ be uncountable and
algebraically closed, and let $X$ be a quasi-projective integral
$k$-scheme with automorphism $\sigma: X \to X$.  Then the following
are equivalent:
\begin{enumerate}
\item $X$ has uncountably many maximal $\sigma$-irreducible closed subsets.

\item $X$ has infinitely many maximal $\sigma$-irreducible closed subsets of
codimension $1$ in $X$.

\item $X$ has a nonconstant  rational function $f \in k(X)$ such that $\sigma(f) = f$.
\end{enumerate}
\end{theorem}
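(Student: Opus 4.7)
My plan is to establish (3) $\Rightarrow$ (1) (which simultaneously gives (3) $\Rightarrow$ (2)) by a concrete level-set construction, and then to treat the reverse implications (1), (2) $\Rightarrow$ (3) by analyzing the action of $\sigma$ on divisor classes of a smooth projective model of $X$. The main difficulty lies in the reverse directions, and in particular in controlling the fixed part of $\sigma$ on $\Pic^0$.

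For (3) $\Rightarrow$ (1) and (2): given a nonconstant $f \in k(X)$ with $\sigma(f) = f$, the induced rational map $f\colon X \dra \mb{A}^1$ has the property that over its open domain of regularity $U$, each fiber $f^{-1}(c) \cap U$ is $\sigma$-stable. The closures $V_c \subseteq X$ are therefore proper $\sigma$-invariant closed subsets of pure codimension $1$, pairwise disjoint on $U$ for distinct $c \in k$. Each $V_c$ decomposes into finitely many $\sigma$-irreducible pieces, one per $\sigma$-orbit on its irreducible components. I would verify directly that every codimension-$1$ $\sigma$-irreducible proper closed subset $W$ is automatically maximal among proper $\sigma$-irreducible closed subsets: since $X$ is integral there is no proper codimension-$0$ closed subset, and a containment $W \subseteq W'$ of two codimension-$1$ $\sigma$-irreducible subsets forces the cyclic $\sigma$-orbit on the components of $W'$ to restrict to a nonempty $\sigma$-stable subset, which must be the full orbit, giving $W = W'$. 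Uncountability of $k$ then produces uncountably many distinct maximal codimension-$1$ $\sigma$-irreducible subsets, establishing both (1) and (2) in one stroke.

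The main work is (2) $\Rightarrow$ (3). After reducing to the case that $X$ is smooth and projective, attach to each maximal codimension-$1$ $\sigma$-irreducible $W_i$ the effective $\sigma$-invariant Weil divisor $D_i$ (the sum over the $\sigma$-orbit of an irreducible component). For each class $[D] \in \Pic(X)^\sigma$, $\sigma$ acts linearly on the finite-dimensional space $H^0(X, \mathcal{O}(D))$, and $\sigma$-invariant effective divisors in $|D|$ correspond to $1$-dimensional subspaces spanned by $\sigma$-eigensections. If any eigenspace of $\sigma$ on $H^0(X, \mathcal{O}(D))$ had dimension $\geq 2$, the ratio of two linearly independent eigensections with a common eigenvalue would be a nonconstant $\sigma$-invariant rational function, giving (3) directly. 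Contrapositively, when (3) fails each class in $\Pic(X)^\sigma$ supports only finitely many $\sigma$-invariant effective divisors, so the problem reduces to bounding the set of classes $\{[D_i]\} \subseteq \Pic(X)^\sigma$.

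This class-bounding step is the principal technical obstacle. My plan is to use the exact sequence $0 \to \Pic^0(X)^\sigma \to \Pic(X)^\sigma \to \NS(X)^\sigma$, together with the finite generation of $\NS(X)$, reducing the issue to control of $\Pic^0(X)^\sigma$: I would show that if (3) fails then $\Pic^0(X)^\sigma$ is $0$-dimensional, arguing contrapositively that a positive-dimensional abelian subvariety $A \subseteq \Pic^0(X)^\sigma$ would let one twist a fixed $\sigma$-invariant ample divisor $H$ by classes in $A$ to obtain a positive-dimensional family of $\sigma$-invariant linear systems, and a family-version of the eigenspace analysis above would then yield two independent $\sigma$-eigensections with the same eigenvalue inside some twist, forcing a nonconstant $\sigma$-invariant rational function. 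Coupled with intersection-theoretic control on the $\NS$-classes $[D_i] \bmod \Pic^0$ via a $\sigma$-invariant polarization, this should suffice to bound $\{[D_i]\}$ and complete (2) $\Rightarrow$ (3). Finally, (1) $\Rightarrow$ (3) is handled by a codimension pigeonhole on the uncountable family of maximal $\sigma$-irreducible subsets: either uncountably many have codimension $1$ (reducing to (2) $\Rightarrow$ (3)), or uncountably many have higher codimension, in which case the Zariski closure of their union is a proper $\sigma$-invariant closed subset on which the theorem applies inductively in $\dim X$.
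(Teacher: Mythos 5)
Your implication $(3)\Rightarrow(1),(2)$ is essentially the paper's argument and is fine (including the observation that a codimension-one $\sigma$-irreducible is automatically maximal). The two reverse implications, however, both contain genuine gaps. For $(1)\Rightarrow(3)$: your ``codimension pigeonhole'' cannot work, because the Zariski closure of the union of uncountably many (indeed, of any infinite collection of) maximal $\sigma$-irreducible subsets is a $\sigma$-invariant closed set containing infinitely many maximal $\sigma$-irreducibles, and a \emph{proper} $\sigma$-invariant closed subset $Z\subsetneq X$ can contain only finitely many of them ($Z$ is a finite union of $\sigma$-irreducibles $Z_1,\dots,Z_r$, and each maximal $\sigma$-irreducible inside $Z$ must equal some $Z_j$). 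So the closure is all of $X$ and there is no proper invariant subset to induct on. The paper handles $(1)\Rightarrow(3)$ by a completely different mechanism: fix a countable $k$-basis of an affine coordinate ring, attach to (almost) every point of the uncountable family a minimal-length vanishing linear combination, pigeonhole on the length $p$, show the span of the vectors $(r_1^{\sigma^n},\dots,r_p^{\sigma^n})$ in $K^p$ has dimension exactly $p-1$, and extract the invariant function from the $\sigma$-stability of its orthogonal complement.

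For $(2)\Rightarrow(3)$, the central lemma you propose --- ``if no nonconstant $\sigma$-invariant rational function exists then $\Pic^0(X)^\sigma$ is $0$-dimensional'' --- is false. Take $X$ an abelian variety and $\sigma$ translation by a point generating a Zariski-dense subgroup (such points exist since $k$ is uncountable): then $\sigma$ has dense orbits, so there is no invariant function, yet $\sigma^*$ is the \emph{identity} on $\Pic^0(X)$, which is positive-dimensional. Consequently the pigeonhole on classes collapses: $\Pic^0(X)^\sigma$ may be uncountable, finitely many $\NS$-classes do not force two of your $D_i$ to be \emph{linearly} equivalent, and algebraic equivalence alone produces no rational function. (There are further problems: $\sigma$ need not extend to an automorphism of a smooth projective model, so it need not act on $H^0(X,\mc{O}(D))$ at all; and a $\sigma$-invariant ample class need not exist.) The idea you are missing, and which the paper uses, is to descend $(X,\sigma)$ to a model $Y$ over a field $F$ finitely generated over its prime subfield; by the Mordell--Weil--N\'eron--Severi (Lang--N\'eron) theorem the \emph{entire} divisor class group $\cl Y$ is then a finitely generated group, as is the unit group modulo constants. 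Pigeonholing there yields relations $(f_m)=[Y_m]-\sum a_{i,m}[Y_i]$, whence semi-invariants $\sigma^p(f)=\alpha f$ with $\alpha$ constant, and a final minimality/algebraic-dependence trick on $\dim Y+1$ such semi-invariants with disjoint divisors eliminates the scalars and produces a genuinely $\sigma^p$-invariant nonconstant function; one then invokes the equivalence $(1)\Leftrightarrow(2)$ of the paper's Theorem~\ref{unc-irr-thm} to pass from $\sigma^p$ to $\sigma$. Your eigensection observation (two independent eigensections with equal eigenvalue give an invariant function) is in the same spirit as this last step, but without the descent to a finitely generated field the argument cannot get started.
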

\noindent We note that given the existence of a rational function
$f$ as in part (3) of the theorem, the fibers of the induced
rational map $f:  X \dra \PP^1$ give a cover of $X$ by uncountably
many $\sigma$-invariant codimension-1 closed subsets; it is
surprising to find that this is forced by each of the seemingly much
weaker conditions in parts (1) and (2).

The previous theorem allows us, for one, to reformulate the study of
the cardinality of the set of maximal $\sigma$-invariant subsets in
a more elegant way.  Given $x \in X$, we write $\mc{O}_x = \{
\sigma^n(x) |n \in \mb{Z} \}$.  We say that $(X, \sigma)$ is
\emph{ordinary} if for all $\sigma$-irreducible closed subsets $Z
\subseteq X$, the set $\{z \in Z | \mc{O}_z\ \text{is Zariski dense
in Z} \}$ is open in $Z$. In the other words, ordinary automorphisms
are those for which the property of lying on a dense orbit is an
open condition, inside  any closed subset $Z$ to which the
automorphism restricts.  We show in Section~\ref{orbital-char-sec},
using the equivalence between (1) and (3) in
Theorem~\ref{main-geom-thm}, that any of our examples will satisfy
the DM-equivalence as long as the corresponding geometric data $(X,
\sigma)$ is ordinary. In addition, the equivalence between (2) and
(3) in Theorem~\ref{main-geom-thm} is very useful in the study of
which pairs $(X, \sigma)$ are ordinary, as it shows that any failure
of ordinariness must be happening in codimension at least $2$.  The
proof of this latter equivalence in Theorem~\ref{main-geom-thm}
involves a reduction to the case of a countable base field, and so
we also study more generally in Section~\ref{base-ext-sec} how the
structure of the $\sigma$-invariant closed subsets is affected by
changing the base field.

In Section~\ref{growth-type-sec}, we restrict to the case that $X$
is integral with $\dim X \leq 2$.  We show how the growth of our
birationally commutative algebra $A$ is related to the geometric
properties of its associated data $(X, \sigma)$. In case $\GK A <
\infty$, we say that $(X, \sigma)$ has \emph{finite growth type},
and we show in particular that this is equivalent to the condition
$\GK k(X)[t, t^{-1}; \sigma] < \infty$.  (If $X$ is projective,
finite growth type is also equivalent to $\sigma$ being
\emph{quasi-unipotent} as in \cite{Ke1}.)  These results extend some
material from \cite{Ro}, which also relies on a dynamical
classification of birational maps of surfaces by Diller and Favre
\cite{DF}.

In Section~\ref{ord-auto-sec}, we prove that various special kinds
of automorphisms of varieties are ordinary, and in particular show
the following result:
\begin{theorem}
\label{main-ord-thm} {\rm [Theorem~\ref{dim2-ord-thm}]} Let $k$ be
uncountable, algebraically closed and of characteristic $0$.  Let
$X$ be a quasi-projective variety over $k$ with $\dim X = 2$, and
let $\sigma: X \to X$ be an automorphism of finite growth type. Then
$(X, \sigma)$ is ordinary.
\end{theorem}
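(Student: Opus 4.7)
I reduce first to the case that $Z$ is integral by passing to a power of $\sigma$. If $Z$ is $\sigma$-irreducible with irreducible components $Z_0,\dots,Z_{m-1}$ cyclically permuted by $\sigma$, then $\tau := \sigma^m$ is of finite growth type (as shown in Section~\ref{growth-type-sec}) and stabilizes each $Z_i$. A direct check gives $\overline{\mc O_z^\sigma} = Z$ if and only if $z \in Z_i$ and $\overline{\mc O_z^\tau} = Z_i$. Since the $Z_i$ intersect only in lower-dimensional loci, if each $D_{Z_i}^\tau$ is open in $Z_i$ then $D_Z^\sigma$ is open in $Z$. So it suffices to show: for any integral $\sigma$-invariant closed $W \subseteq X$, $D_W$ is open in $W$. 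The cases $\dim W = 0$ is trivial; for $\dim W = 1$, either $\sigma|_W$ has finite order (giving $D_W = \emptyset$) or it has infinite order, in which case the set of periodic points on the curve $W$ is finite and closed, so $D_W$ is open.

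\textbf{Main case $\dim W = 2$.} Assume $W = X$ is an integral quasi-projective surface with $\sigma$ of infinite order and finite growth type. By Theorem~\ref{main-geom-thm} I split into two subcases. \emph{Case 1:} there is a nonconstant $\sigma$-invariant rational function $f \in k(X)$. Then $f$ is regular on an open $\sigma$-stable subset $X \setminus I$, where $I$ is a proper $\sigma$-invariant closed subset of $X$. For $x \in X \setminus I$, $\sigma$-invariance of $f$ forces $\mc O_x^\sigma \subseteq f^{-1}(f(x))$, a proper closed subset of $X$; for $x \in I$, $\mc O_x^\sigma \subseteq I$ is also proper. Hence $D = \emptyset$ is open.

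\emph{Case 2:} no such $f$ exists. By the finite growth type analysis of Section~\ref{growth-type-sec} (via the Diller--Favre classification of quasi-unipotent birational self-maps of surfaces), a power $\sigma^n$ extends to an automorphism of a projective surface $Y$ birational to $X$ that lies in the connected algebraic group $G := \Aut^0(Y)$. After a $\sigma^n$-equivariant adjustment, and absorbing a lower-dimensional exceptional $\sigma$-invariant locus into the $\dim \leq 1$ analysis already handled, I identify $X$ with an open $\sigma^n$-stable subvariety of $Y$. Let $H$ be the Zariski closure of $\langle \sigma^n \rangle$ in $G$: it is a positive-dimensional connected algebraic subgroup of $G$, and $\langle \sigma^n \rangle$ is Zariski-dense in $H$, so $\overline{\mc O_y^{\sigma^n}} = \overline{H \cdot y}$ for every $y \in Y$. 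The maximal dimension of an $H$-orbit in $Y$ cannot be $0$ (else $\sigma^n = \id_Y$) or $1$ (else the $H$-orbits would yield a $\sigma^n$-invariant fibration $Y \dra C$, producing a nonconstant $\sigma$-invariant rational function on $X$, contradicting Case 2). Hence it is $2$, and $Y$ has an open $H$-orbit $U$. For $x \in X \cap U$, $\overline{H \cdot x} = \overline{U} = Y$ by irreducibility, so $\mc O_x^{\sigma^n}$ is dense in $X$. For $x \in X \setminus U$, $\dim \overline{H \cdot x} \leq 1$, so $\mc O_x^{\sigma^n}$ has proper closure in $X$. Thus the $\sigma^n$-dense-orbit locus in $X$ is the open set $X \cap U$, and it agrees with the $\sigma$-dense-orbit locus since $X$ is irreducible.

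\textbf{Main obstacle.} The principal technical difficulty is Case 2: invoking the structural results of Section~\ref{growth-type-sec} to pass to a projective model $Y$ where a power of $\sigma$ becomes an element of $\Aut^0(Y)$, and then carefully managing the birational identification between $X$ and its image in $Y$ (the exceptional locus being absorbed into the lower-dimensional analysis). Once the $\Aut^0$ framework is in place, the algebraic group argument --- density of $\langle \sigma^n \rangle$ in its Zariski closure $H$, upper semicontinuity of $H$-orbit dimension, and the exclusion of orbit dimensions $0$ and $1$ via Cases 1 and the infinite-order hypothesis --- runs cleanly.
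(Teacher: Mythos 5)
There is a genuine gap in your Case 2, and it is the heart of the matter. You assert that, since $\sigma$ has finite growth type, ``a power $\sigma^n$ extends to an automorphism of a projective surface $Y$ birational to $X$ that lies in the connected algebraic group $\Aut^0(Y)$.'' This is false in general. Any element of $\Aut^0(Y)$ acts trivially on the discrete group $N^1(Y)$, so if some power of $\sigma$ lay in $\Aut^0(Y)$ then $\Vert(\sigma^n)^*\Vert$ would be bounded, i.e.\ the growth data would be $(\rho,j)=(1,0)$. But finite growth type only means $\rho=1$; the Diller--Favre classification (Theorem~\ref{DF-sum-thm}) allows $j=1$ (where the conjugate $\tau$ need not even be an automorphism of a projective model, only a birational self-map preserving a rational fibration) and $j=2$ (an automorphism preserving an elliptic fibration, with $\Vert(\tau^n)^*\Vert\sim n^2$, so \emph{no} power is numerically trivial and no power can lie in $\Aut^0$). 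These cases are not excluded by your Case 2 hypothesis that there is no $\sigma$-invariant rational function: the base automorphism $\theta:C\to C$ of the invariant fibration can have infinite order, in which case the fibration supplies no invariant function and $\sigma$ may well have dense orbits. So your algebraic-group argument simply does not apply to $j=1,2$. Even for $j=0$, ``numerically trivial up to a power'' does not by itself put $\sigma$ in an algebraic group: one needs the nontrivial theorem of Dolgachev \cite{Do} that the kernel of $\Aut(Y)/\Aut^0(Y)\to \operatorname{O}(N^1(Y))$ is finite, which you do not invoke and which is not contained in Section~\ref{growth-type-sec}.

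The paper's proof treats the three Diller--Favre cases separately. For $j=0$ it uses \cite[Proposition~1]{Do} to place $\tau$ in the algebraic group $\Ker(r)$ and then applies Proposition~\ref{alg-gp-prop}(1) (your orbit-closure argument for an element of an algebraic group is essentially this step, and is fine once membership is established). For $j=1,2$ the argument is entirely different: assuming a dense orbit exists, the induced automorphism $\theta$ of the base curve $C$ has infinite order, hence finitely many periodic points, so all $\tau$-periodic points of $Y$ lie in the finitely many fibers over those points; combining this with Theorem~\ref{inv-divisor-thm} (only finitely many $\sigma$-periodic curves when there is no invariant rational function), the complement of the union $W$ of periodic curves is exactly the open dense-orbit locus. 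You will also need the Lefschetz-principle reduction to $k=\mb{C}$ (via Lemmas~\ref{gdo-basechange-lem} and \ref{fgt-ext-lem}) before citing \cite{DF}, which your write-up omits. Your reductions to the integral case and the handling of $\dim\le 1$ and of Case 1 are correct.
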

\noindent The proof of this theorem depends on many non-trivial
results from the theory of projective surfaces.  The main result
given above, Theorem~\ref{main-thm}, essentially immediately
follows; the proof is given in Section~\ref{summary-sec}, where we
also give a number of illustrative examples.  Among other things, we
review several examples  of skew and skew-Laurent extensions that do
not satisfy the DM-equivalence.

\section*{Acknowledgments}
This paper benefitted greatly from conversations with Lance Small
and Ken Goodearl, and we give them many thanks.

\section{Primitivity and the prime spectrum}

\label{prim-primes-sec}

Let $A$ be a $k$-algebra.  The following hypothesis is crucial for
the results we are about to review.
\begin{standing-hypothesis}
\label{k-hyp} Throughout this section, $k$ is an uncountable base
field.
\end{standing-hypothesis}
\noindent Later sections of the paper will require varying
assumptions on $k$, and so the appropriate standing hypothesis on
$k$ will be stated at the beginning of each section, as well as in
the statements of major results.

We first recall some standard results relating the primitivity of a
prime ideal $P$ in $A$ to the cardinality of the set of height one
primes in $A/P$.  We then specialize to the case of $\mb{Z}$-graded
birationally commutative algebras $A$.  Such algebras may have many
non-homogeneous primes, but we show the somewhat surprising result
that to detect the Dixmier-Moeglin equivalence, it typically
suffices to examine only the homogeneous prime spectrum of $A$.

Recall from the introduction that an algebra $A$ satisfies the
Dixmier-Moeglin (DM)-equivalence if the sets of locally closed,
rational, and primitive ideals of $A$, as defined there, all
coincide.   We say that a prime ideal $P$ of a prime algebra $A$ has
\emph{height one} if there does not exist a prime ideal $Q$ with
$(0) \subsetneq Q \subsetneq P$.   Let $\spec A$ be the set of all
prime ideals of $A$, considered as a topological space using the
Zariski topology, whose closed sets are the sets $V(I) = \{ P \in
\spec A | P \supseteq I \}$ as $I$ ranges over the ideals of $A$.
For the definitions of other standard notions we use here, such as
Jacobson ring, we refer the reader to \cite{MR}.  It is not obvious,
in general, that a nonzero prime ideal of a noetherian algebra $A$
must contain a height one prime.  To ensure this we will usually
assume that $A$ has DCC on prime ideals, which will be easy to show
in all of the examples in which we are interested.  It is a
well-known open question whether a noetherian ring must have DCC on
prime ideals.

The next two results, Lemmas~\ref{height-one-lem} and \ref{PI-lem},
are well-known, but for lack of a single reference we sketch the
proofs for the convenience of the reader.  The first result shows
how the DM-equivalence for the prime $(0)$ comes for free, given a
cardinality assumption on the set of height one primes.
\begin{lemma}
\label{height-one-lem}  Let $A$ be a prime noetherian, countably
generated $k$-algebra with DCC on prime ideals.
\begin{enumerate}
\item Suppose that $A$ has finitely many height one primes.  Then $(0)$
is locally closed, primitive, and rational.
\item Suppose that $A$ has uncountably many height one primes.  Then
$(0)$ is not locally closed, not primitive, and not rational.
\end{enumerate}
\end{lemma}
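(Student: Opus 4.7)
My plan for (1) is to establish local closedness directly and then invoke standard chain implications. First, since $A$ is prime, the intersection of finitely many nonzero two-sided ideals is nonzero, so $I := P_1 \cap \cdots \cap P_n \neq 0$; by DCC on prime ideals, every nonzero prime of $A$ contains some height one prime and hence contains $I$. Therefore $\spec A \setminus \{(0)\} = V(I)$ is closed and $(0)$ is locally closed. The implications $\text{locally closed} \Rightarrow \text{primitive} \Rightarrow \text{rational}$ then upgrade this to the full statement; both are standard for a countably generated prime $k$-algebra over uncountable $k$ under Standing Hypothesis~\ref{k-hyp} (the first via an Amitsur--Small density argument and the second via Quillen's lemma combined with algebraic closure of $k$, which forces $\End_A(M) = k$ for every simple $A$-module $M$; see \cite{RoSm}).

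For (2), the contrapositives of these same implications reduce the task to showing that $(0)$ is not rational. Nevertheless, I would first record an independent clean proof of ``not locally closed'': in a noetherian ring only finitely many minimal primes lie over any two-sided ideal, and in the prime ring $A$ every height one prime $P$ containing a fixed $0 \neq c$ is minimal over the nonzero ideal $AcA$ (any prime strictly below $P$ equals $(0)$, which does not contain $AcA$). Hence each nonzero element of $A$ lies in only finitely many height one primes, so $\bigcap_\alpha P_\alpha = 0$ whenever $\{P_\alpha\}$ is infinite, precluding local closedness. Note that this argument does not yet use uncountability.

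The main obstacle is ``not rational'', and this is where uncountability of $k$ is used essentially. My plan is to argue by contradiction. Suppose $Z(Q(A)) = k$; by the Amitsur--Small Nullstellensatz this upgrades to primitivity of $(0)$, producing a faithful simple module $M$ with $\End_A(M) = k$ and $\dim_k M \leq \aleph_0$. For each height one prime $P_\alpha$, faithfulness and simplicity give $P_\alpha M = M$, so for a fixed $0 \neq m \in M$ one obtains $p_\alpha \in P_\alpha$ with $p_\alpha - 1 \in L := \ann_A(m)$, a maximal left ideal of countable codimension. The contradiction then arises from the impossibility, over uncountable $k$, of an uncountable family $\{p_\alpha\}$ sitting in the single coset $1 + L$ while lying in pairwise incomparable height one primes; this is a counting/linear-independence estimate that is the technical heart of the argument. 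Once ``not rational'' is established, ``not primitive'' follows from the implication primitive $\Rightarrow$ rational, completing the plan.
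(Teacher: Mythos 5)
Your proof of part (1) is correct and is essentially the paper's: exhibit $\spec A\setminus\{(0)\}=V(P_1\cap\cdots\cap P_n)$ and then apply the Jacobson property and Nullstellensatz (which hold here because $A$ is countably generated over an uncountable field) to get the chain locally closed $\Rightarrow$ primitive $\Rightarrow$ rational. Your independent argument in part (2) that $(0)$ is not locally closed --- each nonzero ideal has only finitely many minimal primes over it, hence lies in only finitely many height one primes --- is also fine, and is in fact the same finiteness observation the paper uses.

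The gap is in your argument that $(0)$ is not rational. You begin by assuming $Z(Q(A))=k$ and assert that ``the Amitsur--Small Nullstellensatz upgrades this to primitivity of $(0)$.'' That implication goes the wrong way: the Nullstellensatz gives primitive $\Rightarrow$ rational, and rational $\Rightarrow$ primitive is one of the \emph{hard} directions of the Dixmier--Moeglin equivalence --- precisely what this paper is trying to establish in special cases. It is false under exactly the hypotheses of the lemma: the paper's final example, $U=S[t;\sigma]$ with $S=\mb{C}[u^{\pm1},v^{\pm1}]$ and $\sigma(u)=v$, $\sigma(v)=uv^{-1}$, is a prime noetherian countably generated algebra for which $(0)$ is rational but not primitive. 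So your contradiction argument never gets off the ground. (Even granting a faithful simple module, the ``counting/linear-independence estimate'' you defer to is not obviously available: an uncountable family $\{p_\alpha\}$ inside a single coset $1+L$ of a maximal one-sided ideal is not by itself contradictory, and you give no mechanism for extracting a contradiction from the incomparability of the $P_\alpha$.) The paper closes this gap differently: it uses your finiteness observation to show that $A$ has no \emph{countable separating set of ideals} (no countable family of nonzero ideals $\{I_n\}$ such that every nonzero ideal contains some $I_n$ --- if such a family existed, each of the uncountably many height one primes would be minimal over some $I_n$, but only finitely many can be minimal over each), and then invokes Irving's theorem \cite[Theorem 2.2]{Ir}, which says that for a countably generated noetherian prime algebra over an uncountable field the absence of a countable separating set forces $(0)$ to be non-rational. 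Some such external input is genuinely needed here; your proposal is missing it.
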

\begin{proof}
Since $A$ is noetherian and $\dim_k A$ has smaller cardinality than
$|k|$, it is a standard result that $A$ is a Jacobson ring
satisfying the Nullstellensatz over $k$ \cite[Proposition II.7.12,
Proposition II.7.16]{BG}.  In this case, for any prime $P$ we have
$P$ locally closed $\implies$ $P$ primitive $\implies$ $P$ rational
\cite[Lemma II.7.15]{BG}.

(1) Suppose that $A$ has finitely many height one primes, say
$\{P_1, P_2, \dots, P_n \}$.  By the previous paragraph, we just
need to prove that $(0)$ is locally closed.  Since every prime ideal
contains a height one prime, we have $\{(0) \} = \spec A \setminus
V(I)$, where $I = P_1 \cap P_2 \cap \dots \cap P_n \neq 0$.  Thus
$\{(0) \}$ is open in its closure, $\spec A$.

(2) Suppose that $A$ has uncountably many height one primes.  By the
first paragraph of the proof, we need show only that $(0)$ is not
rational.  Suppose that $A$ has a countable separating set of
ideals, in other words there exist nonzero ideals $\{I_1, I_2, I_3,
\dots \}$ of $A$ with the following property: for any nonzero ideal
$J$ of $A$, there is some $n \geq 1$ so that $J \supseteq I_n$.  Each height one prime
$P$ contains $I_n$ for some $n$, and since $P/I_n$ is a minimal
prime of the noetherian ring $A/I_n$, we see that at most finitely
many height one primes contain a given $I_n$.  This contradicts the
assumption that $A$ has uncountably many height 1 primes, so $A$
does not have a countable separating set of ideals.  Since $A$ is
noetherian and countably generated, Irving's theorem \cite[Theorem
2.2]{Ir} now applies and shows that $(0)$ is not rational.
\end{proof}

The preceding lemma shows that the existence of algebras with a
countably infinite number of height one primes is the main
obstruction to the DM-equivalence.  This suggests the following
definition.
\begin{definition} \label{height-one-def}
Given a $k$-algebra $A$, we say that $\spec A$ is
\emph{countable-avoiding} if every prime factor algebra of $A$ has
either finitely many or uncountably many height one primes.
\end{definition}

The following is an immediate consequence of the preceding
definition and Lemma~\ref{height-one-lem}.
\begin{corollary}
\label{ord-DM-cor}   Let $A$ be a noetherian, countably generated
$k$-algebra such that $A$ has DCC on prime ideals and $\spec A$ is
\ord. Then $A$ satisfies the DM-equivalence.
\end{corollary}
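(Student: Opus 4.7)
The plan is to reduce the corollary directly to Lemma~\ref{height-one-lem}, one prime at a time. Fix an arbitrary $P \in \spec A$; the goal is to show that $P$ is locally closed iff primitive iff rational.

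First I would translate these three properties from $P \in \spec A$ to $(0) \in \spec(A/P)$. Under the natural homeomorphism $\spec(A/P) \cong V(P) \subseteq \spec A$, the closure of $\{P\}$ is precisely $V(P)$, so $P$ is locally closed in $\spec A$ exactly when $(0)$ is locally closed in $\spec(A/P)$. A simple $A$-module with annihilator equal to $P$ is the same as a faithful simple $A/P$-module, so $P$ is primitive in $A$ iff $(0)$ is primitive in $A/P$. Rationality depends only on the Goldie quotient ring $Q(A/P)$, which is intrinsic to $A/P$. Hence it suffices to analyze $(0) \in \spec(A/P)$.

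Next I would verify that $A/P$ inherits every hypothesis of Lemma~\ref{height-one-lem}: it is prime by construction, noetherian and countably generated as a factor of $A$, and it has DCC on prime ideals since its primes biject with those of $A$ containing $P$. Because $\spec A$ is \ord, $A/P$ then has either finitely many or uncountably many height one primes. In the former case Lemma~\ref{height-one-lem}(1) makes $(0)$ simultaneously locally closed, primitive, and rational in $A/P$; in the latter, Lemma~\ref{height-one-lem}(2) makes $(0)$ satisfy none of the three. Either way the three properties agree at $P$, giving the DM-equivalence for $A$. No real obstacle arises beyond the bookkeeping translation of properties in the first paragraph; the uncountability of $k$ plays no further direct role here, having already been absorbed into Lemma~\ref{height-one-lem} via Irving's theorem.
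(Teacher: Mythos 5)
Your proposal is correct and is exactly the argument the paper has in mind: the paper states the corollary as an ``immediate consequence'' of Definition~\ref{height-one-def} and Lemma~\ref{height-one-lem}, and your prime-by-prime reduction to $(0)\in\spec(A/P)$, together with the check that $A/P$ inherits the lemma's hypotheses, is precisely the bookkeeping being left to the reader. Nothing is missing.
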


The preceding corollary applies to many standard kinds of algebras,
for example algebras satisfying a polynomial identity (PI algebras).
\begin{lemma}
\label{PI-lem} Let $R$ be a noetherian PI algebra which is countably
generated over $k$.  Then $R$ has DCC on prime ideals, $\spec R$ is
\ord, and $R$ satisfies the DM-equivalence.
\end{lemma}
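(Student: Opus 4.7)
The DCC on prime ideals for a noetherian PI algebra is classical: such rings have finite classical Krull dimension (see e.g.\ \cite{MR}), which automatically forces DCC on $\spec R$. Granting this, the DM-equivalence for $R$ will follow immediately from Corollary~\ref{ord-DM-cor} once the countable-avoiding property is verified, and it is the latter on which I would focus.

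To establish that $\spec R$ is countable-avoiding, I would fix a prime $P$ of $R$ and, replacing $R$ by $R/P$ (which is again countably generated noetherian PI), reduce to the case where $R$ is prime. Set $Z = Z(R)$. The plan is then to invoke the classical structure theory of prime noetherian PI rings: $Z$ is a commutative noetherian integral domain, and the contraction map $Q \mapsto Q \cap Z$ from $\spec R$ to $\spec Z$ is surjective, has finite fibers, and preserves heights (see e.g.\ \cite{MR}). Consequently, $R$ has uncountably many (respectively finitely many) height-one primes if and only if $Z$ does, and it suffices to verify this dichotomy for the center $Z$.

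For the commutative domain $Z$ the dichotomy is then straightforward. If $\dim Z = 0$ then $Z$ is a field and has no height-one primes. Otherwise $\dim Z \geq 1$, so $Z$ contains an element $z$ transcendental over $k$. A partial-fractions computation (using that $z$ is transcendental) shows that the elements $\{(z-c)^{-1} : c \in k,\ z - c \text{ a unit in } Z\}$ are $k$-linearly independent; since $R$ is countably generated as a $k$-algebra, $\dim_k Z \leq \dim_k R \leq \aleph_0$, and so at most countably many $c \in k$ can make $z - c$ a unit. As $k$ is uncountable, uncountably many $c \in k$ leave $z - c$ a nonzero non-unit of $Z$, and for each such $c$ Krull's Hauptidealsatz produces a height-one prime $\p_c \supseteq (z-c)Z$. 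These primes are pairwise distinct, since a prime containing both $z - c_1$ and $z - c_2$ for distinct $c_i$ would also contain the unit $c_1 - c_2 \in k^\times$. This exhibits uncountably many height-one primes of $Z$, completing the dichotomy.

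The main technical ingredient I expect to lean on is the prime correspondence between $R$ and $Z(R)$ for noetherian PI rings (including the preservation of heights); granted that standard tool, the countable-avoiding property of $\spec R$ reduces to a clean cardinality argument exploiting the uncountability of $k$.
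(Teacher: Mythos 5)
The architecture of your proposal (reduce to $R$ prime, verify the countable-avoiding dichotomy, then quote Corollary~\ref{ord-DM-cor}) matches the paper, and your partial-fractions cardinality argument is essentially the same mechanism the paper uses. But the step you describe as ``classical structure theory'' is a genuine gap: for a general prime noetherian PI ring $R$ with center $Z$, the contraction $Q \mapsto Q \cap Z$ is \emph{not} known to be surjective, finite-to-one, and height-preserving, and this package cannot be cited from \cite{MR}. Lying over and going down from $Z$ to $R$ can fail unless $R$ satisfies extra hypotheses (e.g.\ being a finite module over its center, which is not automatic for noetherian PI rings), finite fibres likewise require such module-finiteness, and height preservation needs going down even in that favorable case. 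The facts that genuinely are available at this generality are much weaker: Rowen's theorem that every nonzero ideal of a prime PI ring meets the center nontrivially, Posner's theorem that the central localization $RS^{-1}$ (with $S = Z \setminus \{0\}$) is a central simple algebra over $L = Q(Z)$, and Kaplansky's theorem that a primitive PI ring is central simple. The paper's proof runs entirely on these: if $R$ is primitive it is simple and has no height-one primes; if not, $L/k$ must be transcendental (otherwise $R$ is algebraic over $k$, hence equal to $RS^{-1}$ and simple), and if $R$ had only countably many height-one primes $P_1, P_2, \dots$, each containing a regular central element $z_i$, then inverting the $z_i$ already produces the simple ring $RS^{-1}$, which would then be countably generated and hence countable-dimensional over $k$ --- contradicting the uncountable dimension forced by $\trdeg L/k \geq 1$ and the uncountability of $k$. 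That last dimension count is exactly your linear independence of the $(z-c)^{-1}$; what your write-up is missing is a valid bridge between height-one primes of $R$ and central elements, and ``each nonzero prime contains a regular central element, so localize'' is the bridge that actually works.

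Two smaller points. Your justification of DCC overstates what is true: a noetherian PI ring need not have finite classical Krull dimension (this already fails for commutative noetherian rings), but Small's theorem that every prime of a noetherian PI ring has finite height does give DCC, and that is what the paper cites. Also, even granting your reduction to $Z$, the Hauptidealsatz step requires $Z$ to be noetherian, and the center of a noetherian PI ring need not be noetherian; the paper's route avoids ever working inside $\spec Z$.
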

\begin{proof} The fact that $R$ has DCC on prime ideals is
a theorem of Small \cite{Sm}.  By Corollary~\ref{ord-DM-cor}, we
need only show that every prime factor ring $R/P$ has a finite or
uncountable number of height one primes.  Since the hypotheses pass
to factor rings, we may assume that $R$ is prime and prove that $R$
has either finitely many or uncountably many height one primes.

Suppose first that $R$ is primitive.  By Kaplansky's Theorem, a
primitive PI ring is a central simple algebra, so $(0)$ is the only
prime of $R$ in this case.

If $R$ is not primitive, let $Z$ be the center of $R$.  By Posner's
Theorem, $RS^{-1}$ is a finite-dimensional central simple $L =
ZS^{-1}$-algebra, where $S$ is the multiplicative system $Z
\setminus \{ 0 \}$.  If $L/k$ is algebraic, then $RS^{-1}$ is an
algebraic $k$-algebra and hence $R$ itself is as well.  But then
every regular element of $R$ is already a unit, and so $RS^{-1} =
R$. In this case, $R$ is simple and so certainly primitive, a
contradiction. Thus $L/k$ is transcendental.  Now suppose that $R$
has at most countably many height one primes $\{ P_1, P_2, \dots
\}$.  Each such prime $P_i$ contains a regular central element $z_i
\in P_i$.  If $T$ is the multiplicative system generated by the
elements $\{ z_1, z_2, \dots \}$, then $RT^{-1}$ is already simple
(since every prime contains a height one prime) and so $RT^{-1} =
RS^{-1}$. But then $RS^{-1}$ is countably generated over $k$, in
particular countable dimensional over $k$.  This is a contradiction
because $k$ is uncountable and $L/k$ is transcendental, which forces
$L$ and hence $RS^{-1}$ to be uncountable dimensional over $k$. Thus
$R$ has uncountably many height one primes.
\end{proof}

In this paper, we are primarily interested in studying primitivity
for certain classes of $\mb{Z}$-graded algebras.  Let $A =
\bigoplus_{n \in \mb{Z}} A_n$ be a prime $\mb{Z}$-graded noetherian
$k$-algebra. Then the graded quotient ring $Q = Q_{\rm gr}(A)$ of
$A$ exists by \cite[Theorem 1]{GS}, and by \cite[Theorem I.5.8]{NV},
$Q$ must have the form $M_n(S)$  (with some choice of grading) for
some graded division ring $S$.  Moreover, either $A = A_0$ is
trivially graded and $S = S_0 = D$ is a division ring in degree $0$,
or else $S \cong D[t, t^{-1}; \sigma]$ for some division ring $D$ in
degree $0$ and element $t$ of positive degree \cite[Corollary
I.4.3]{NV}. We call $A$ \emph{birationally PI} if $D$ is PI. In
practice, all of the specific examples we study later will actually
satisfy the stronger condition that either $Q_{\rm gr}(A) = K$ or
$Q_{\rm gr}(A) \cong K[t, t^{-1}; \sigma]$ for some commutative ring
$K$ and element $t$ of positive degree, in which case we say that
$A$ is \emph{birationally commutative}.

Birationally PI algebras are special in the following way.
\begin{lemma}
\label{bc-dichotomy-lem} Let $A$ be a prime $\mb{Z}$-graded
noetherian $k$-algebra which is birationally PI, and let $Q = Q_{\rm
gr}(A)$.
\begin{enumerate}
\item $Q$ is either PI or a simple ring.
\item If $Q$ is simple,  then every nonzero prime ideal of $A$
contains a nonzero homogeneous prime ideal.
\end{enumerate}
\end{lemma}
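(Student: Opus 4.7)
The plan is to exploit the structure $Q_{\mathrm{gr}}(A) = M_n(S)$ with $S$ a graded division ring of the form $D$ (trivially graded) or $D[t, t^{-1}; \sigma]$ with $t$ in positive degree, as recalled just before the lemma, since $M_n(-)$ preserves both simplicity and PI-ness. Part~(1) then reduces to showing $S$ itself is either simple or PI. The trivially graded case is immediate: $S=D$ is PI by the birationally-PI hypothesis. For $S=D[t,t^{-1};\sigma]$, the key dichotomy I will establish is: $S$ is simple if and only if no nonzero power of $\sigma$ is $X$-inner on $D$.

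Suppose $S$ has a nonzero proper ideal $J$. Choose $\alpha \in J \setminus \{0\}$ with the smallest number of nonzero homogeneous components, and (after multiplying by a power of $t$) write $\alpha = \sum_{i=0}^m d_i t^i$ with $d_0, d_m \neq 0$; we must have $m \geq 1$, since otherwise $\alpha \in D^{\times}$ would force $J = S$. For any $d \in D$, the element $d\alpha - \alpha \cdot \sigma^{-m}(d_m^{-1} d d_m)$ lies in $J$ and has vanishing degree-$m$ coefficient, so by minimality it must be zero; comparing degree-$0$ coefficients then yields $\sigma^m(d_0^{-1} d d_0) = d_m^{-1} d d_m$ for every $d$, i.e.\ $\sigma^m = \operatorname{Inn}_u$ with $u := d_m^{-1} d_0$. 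Given $\sigma^m = \operatorname{Inn}_u$, the element $T' := u^{-1} t^m$ centralizes $D$, so the subring $R := D[t^m, t^{-m}; \sigma^m]$ coincides with the untwisted Laurent extension $D[T', T'^{-1}]$; since $D$ is PI and therefore module-finite over its center $K = Z(D)$, $R$ is module-finite over the commutative central subring $K[T', T'^{-1}]$ and is hence PI. Finally $S = \bigoplus_{i=0}^{m-1} t^i R$ is free of rank $m$ as a right $R$-module, so left multiplication embeds $S$ into $\operatorname{End}(S_R) \cong M_m(R)$, showing that $S$, and therefore $Q = M_n(S)$, is PI.

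For part (2), assume $Q$ is simple and let $P$ be a nonzero prime of $A$. Writing $\Sigma$ for the multiplicatively closed set of homogeneous regular elements of $A$, so $Q = A \Sigma^{-1}$: if $P \cap \Sigma = \emptyset$ then $P \Sigma^{-1}$ would be a proper prime of $Q$, contradicting simplicity, so $P$ contains a nonzero homogeneous element $x$. Then $I := AxA$ is a nonzero homogeneous two-sided ideal contained in $P$, and the Noetherian hypothesis furnishes only finitely many primes minimal over $I$. The degree-scaling action $\tau_{\lambda}(a) = \lambda^{\deg a} a$ of $k^{\times}$ on $A$ permutes this finite set, so each minimal prime has $k^{\times}$-stabilizer of finite index, and since $k$ is uncountable (Standing Hypothesis~\ref{k-hyp}) each such stabilizer is infinite; a Vandermonde argument applied to sufficiently many stabilizing $\lambda$'s then forces each such prime to be homogeneous. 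Choosing a minimal prime over $I$ contained in $P$ supplies the required nonzero homogeneous prime. The main subtlety is the PI claim of part (1): the natural candidate $u^{-1} t^m$ is central only in the subring $R$, not in $S$, so I must detour through $R$, establish PI-ness there by module-finiteness over a commutative central subring, and then lift to $S$ via the matrix embedding.
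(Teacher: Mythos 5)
Your proof is correct. Part (1) follows essentially the same route as the paper's: split off the trivially graded case, and for $S = D[t,t^{-1};\sigma]$ establish the dichotomy ``some power of $\sigma$ inner $\Rightarrow$ PI; no power inner $\Rightarrow$ simple.'' The paper simply cites McConnell--Robson for both halves (Theorem 1.8.5 for simplicity of the skew-Laurent ring, Corollaries 13.1.11 and 13.4.9 for the PI transfer), whereas you prove them from scratch; your minimal-support computation and the passage through the central element $u^{-1}t^m$ and the untwisted subring $D[T',T'^{-1}]$ are precisely the content of those citations, so this is a difference of self-containedness, not of method. Part (2) is where you genuinely diverge. Both proofs begin by extracting a homogeneous regular element of $P$ from the simplicity of $Q$. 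The paper then takes $\wt{P}$ to be the ideal generated by \emph{all} homogeneous elements of $P$ and checks directly that $\wt{P}$ is prime, using only the formal fact that a homogeneous ideal of a $\mb{Z}$-graded ring is prime once the prime condition is verified on homogeneous ideals; this is a two-line argument valid over any field and needing no finiteness hypotheses. You instead pass to the minimal primes over $AxA$ and prove they are homogeneous via the $k^{\times}$-rescaling action. That works, but it spends the noetherian hypothesis and the uncountability of $k$ (Standing Hypothesis~\ref{k-hyp}) on a step the paper gets for free, and the Vandermonde step needs one more sentence: for distinct degrees $d_1,\dots,d_r$ the generalized Vandermonde matrix $(\lambda_j^{d_i})$ is \emph{not} invertible for arbitrary distinct $\lambda_j$ (e.g.\ degrees $0,2$ with $\lambda=\pm 1$), so you should take $\lambda_j = \mu^{j}$ for a single $\mu$ in the stabilizer whose powers $\mu^{d_1},\dots,\mu^{d_r}$ are pairwise distinct --- such a $\mu$ exists because the stabilizer is infinite while each coincidence $\mu^{d_i}=\mu^{d_{i'}}$ excludes only finitely many $\mu$. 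With that small repair, your argument delivers the required nonzero homogeneous prime inside $P$ just as the paper's does.
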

\begin{proof}
(1) If $A = A_0$ is trivially graded, then by assumption $Q = D$ is
a PI division ring, so that case is certainly fine.  Thus assume
that $Q \cong M_n(D[t, t^{-1}; \sigma])$ with some choice of
grading, where $D$ is PI. Suppose first that some positive power of
$\sigma$ is inner, say $\sigma^n(x) = a^{-1} x a$ for some $0 \neq a
\in D$ and $n > 0$.  Then $D[t, t^{-1}; \sigma]$ is finitely
generated as a module over the subring $D[t^n, t^{-n}; \sigma^n]
\cong D[z, z^{-1}]$, where $z = a t^n$. So $D[z, z^{-1}]$ is PI
\cite[Corollary 13.1.11]{MR}, and thus $D[t, t^{-1}; \sigma]$ is PI
\cite[Corollary 13.4.9]{MR}.  Then $Q = M_n(D[t, t^{-1}; \sigma])$
is also PI \cite[Theorem 13.4.8]{MR}. Otherwise, no power of
$\sigma$ is inner. In this case $D[t, t^{-1}; \sigma]$ is simple by
\cite[Theorem 1.8.5]{MR}, so $Q$ is also simple.

(2) Since the graded quotient ring $Q$ of $A$ is simple, every
nonzero prime ideal $P$ of $A$ contains a homogeneous regular
element $0 \neq a \in P$.  Let $\wt{P}$ be the homogeneous ideal
generated by all homogeneous elements in $P$. If $I$ and $J$ are
homogeneous ideals such that $IJ \subseteq \wt{P}$, then $IJ
\subseteq P$ and so $I \subseteq P$ or $J \subseteq P$. But since
$I$ and $J$ are homogeneous, this means that $I \subseteq \wt{P}$ or
$J \subseteq \wt{P}$ by the definition of $\wt{P}$; we conclude that
$\wt{P}$ is a nonzero homogeneous prime ideal contained in $P$.
\end{proof}

In the next proposition, we justify the claim made at the beginning
of this section, namely that to prove the Dixmier-Moeglin
equivalence for birationally PI algebras, it often is enough to
study only the homogeneous prime spectrum.  For this purpose, we
need the following homogeneous analog of
Definition~\ref{height-one-def}.
\begin{definition}
Let $A$ be a $\mb{Z}$-graded algebra, and let $\hspec A$ be the
homogeneous prime spectrum with the Zariski topology.  If $A$ is
prime, a homogeneous prime $P$ of $A$ has \emph{h-height one} if
there does not exist a homogeneous prime $Q$ with $(0) \subsetneq Q
\subsetneq P$.  We say that $\hspec A$ is \emph{\hord} if every
factor ring $B = A/P$ by a homogeneous prime $P$ has either finitely
many or uncountably many h-height one primes.
\end{definition}

\begin{proposition}
\label{hord-DM-prop}  Let $A$ be a noetherian, countably generated
$\mb{Z}$-graded algebra such that  for every homogeneous prime $P$
of $A$, $A/P$ is birationally PI;  $A$ has DCC on homogeneous
primes; and $\hspec A$ is \hord. Then $A$ has DCC on primes, $\spec
A$ is \ord, and $A$ satisfies the Dixmier-Moeglin equivalence.
\end{proposition}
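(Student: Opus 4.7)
The strategy is to deduce both that $A$ has DCC on primes and that $\spec A$ is countable-avoiding, whence the Dixmier-Moeglin equivalence follows immediately from Corollary~\ref{ord-DM-cor}. The common device is the following reduction: for any prime $P$ of $A$, let $\tilde P$ denote the largest homogeneous ideal contained in $P$ (which is a homogeneous prime by a standard argument). After passing to $A/\tilde P$, which is prime, $\mb{Z}$-graded, and birationally PI by hypothesis, Lemma~\ref{bc-dichotomy-lem} furnishes a sharp dichotomy: either $Q_{\rm gr}(A/\tilde P)$ is PI---in which case $A/\tilde P \hookrightarrow Q_{\rm gr}(A/\tilde P)$ is itself PI, and Lemma~\ref{PI-lem} gives DCC on primes and the countable-avoiding property for free---or $Q_{\rm gr}(A/\tilde P)$ is simple, in which case every nonzero prime of $A/\tilde P$ contains a nonzero homogeneous prime.

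For DCC on primes, suppose towards contradiction that $P_1 \supsetneq P_2 \supsetneq \cdots$ is an infinite strictly descending chain of primes of $A$. Their homogeneous cores $\tilde P_i$ form a weakly descending chain of homogeneous primes, which stabilizes at some $\tilde P$ by the DCC on $\hspec A$. After truncating and passing to $A/\tilde P$, I may assume $A$ is prime with $\tilde P_i = 0$ for all $i$; a short check shows that each $P_i$ is nonzero in the reduced setting, since if $P_i=0$ then $P_{i+1}\subsetneq 0$ is impossible. The PI alternative of the dichotomy now contradicts Lemma~\ref{PI-lem}, while the simple alternative forces each $P_i$ to contain a nonzero homogeneous prime, contradicting $\tilde P_i=0$.

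To prove $\spec A$ is countable-avoiding, fix a prime $P$ of $A$ and set $A' = A/\tilde P$, $P' = P/\tilde P$, so that $P'$ has trivial homogeneous core in $A'$. If $P' \neq 0$, then the simple alternative of Lemma~\ref{bc-dichotomy-lem} is excluded; hence $A'$ is PI, so is $A/P = A'/P'$, and Lemma~\ref{PI-lem} yields the desired dichotomy. If $P' = 0$, then the PI subcase is again handled by Lemma~\ref{PI-lem}, while in the simple subcase the key observation is that height one primes of $A'$ are automatically homogeneous: any such prime contains a nonzero homogeneous prime by the dichotomy, and strict containment would violate height one. Hence height one primes of $A' = A/\tilde P$ coincide with h-height one primes, and the hypothesis that $\hspec A$ is \hord\ (applied to the homogeneous prime $\tilde P$) delivers the required finite-or-uncountable count.

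The main subtlety in the entire argument is precisely this identification of height one primes with h-height one primes in the simple case, together with the careful verification that the stabilization step in the DCC argument is compatible with strictness of the original chain. Once those bookkeeping points are in place, the proof is essentially a direct assembly of Lemmas~\ref{bc-dichotomy-lem} and~\ref{PI-lem} with the hypotheses on $\hspec A$.
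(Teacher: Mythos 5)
Your proof is correct and follows essentially the same route as the paper: pass to the homogeneous core $\tilde P$, invoke the PI/simple dichotomy of Lemma~\ref{bc-dichotomy-lem}, handle the PI branch via Lemma~\ref{PI-lem}, and in the simple branch identify height one primes with h-height one primes so that the \hord\ hypothesis applies. The only (cosmetic) difference is in the DCC step, where you replace the paper's noetherian induction by stabilizing the chain of homogeneous cores and reapplying the dichotomy directly to $A/\tilde P$; both arguments are sound.
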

\begin{proof}
Since the minimal primes of $A$ are homogeneous \cite[Corollary
C.I.1.9]{NV}, we may assume without loss of generality that $A$
itself is prime.

We first prove that $A$ has DCC on prime ideals.  By noetherian
induction, we can assume that all proper homogeneous prime factor
rings of $A$ have DCC on prime ideals.  Applying
Lemma~\ref{bc-dichotomy-lem} to the ring $A$, we see that either $A$
is PI or else every nonzero prime $P$ of $A$ contains a nonzero
homogeneous prime $\wt{P}$.  In the former case, $A$ has DCC on
prime ideals by Lemma~\ref{PI-lem}, so assume the latter case.
Recall from the proof of Lemma~\ref{bc-dichotomy-lem} that $\wt{P}$
is the ideal generated by the homogeneous elements in $P$.  Thus a
descending chain of nonzero prime ideals of $A$, $P_1 \supseteq P_2
\supseteq P_3 \supseteq \dots$, leads to a descending chain of
nonzero homogeneous primes $\wt{P}_1 \supseteq \wt{P}_2 \supseteq
\dots$ which stabilizes by assumption, with $\wt{P}_n = \wt{P}_{n+1}
= \dots$, say.  Now $A/\wt{P}_n$ is a proper homogeneous prime
factor ring of $A$ and so has DCC on primes by the induction
hypothesis.  Thus the original chain $P_1 \supseteq P_2 \supseteq
\dots$ of primes, all of which contain $\wt{P}_n$, must also
stabilize.

Now we prove that $\spec A$ is \ord.  Let $M$ be any prime ideal of
$A$, and let $P \subseteq M$ be a homogeneous prime ideal maximal
among the homogeneous primes contained in $M$.  Applying
Lemma~\ref{bc-dichotomy-lem} to the birationally PI ring $A/P$, we
have two cases to consider. If $Q = Q_{\rm gr}(A/P)$ is PI, then
$A/P$ is PI, and hence $A/M$ is PI.  Then $A/M$ has finitely many or
uncountably many height one primes by Lemma~\ref{PI-lem}. Otherwise,
$Q$ is simple and every nonzero prime of $A/P$ contains a nonzero
homogeneous prime. In particular, in this case $M = P$ by choice of
$P$, so $M$ is itself homogeneous. Moreover, since every nonzero
prime of $A/M$ contains a nonzero homogeneous prime, it follows that
the h-height one primes of $A/M$ are the same as the height one
primes of $A/M$.  Thus the assumption that $\hspec A$ is \hord\
implies that $A/M$ has finitely many or uncountably many height one
primes.  Since $M$ was an arbitrary prime, we conclude that $\spec
A$ is \ord.

Finally, $A$ satisfies the Dixmier-Moeglin equivalence by
Corollary~\ref{ord-DM-cor}.
\end{proof}

\section{Some examples of birationally commutative algebras}
\label{examples-sec}

Beginning with this section, we will work with graded algebras whose
homogeneous ideals can be described using algebraic geometry. For
simplicity, we adopt the following blanket convention on the base
field.
\begin{standing-hypothesis} \label{k-hyp2} In this section,
$k$ stands for an uncountable, algebraically closed field.
\end{standing-hypothesis}

We begin now to introduce some important classes of examples to
which Proposition~\ref{hord-DM-prop} applies. The examples in which
we are especially interested are twisted homogeneous coordinate
rings, because these rings are so important in the theory of
noncommutative projective geometry. We will see that for such rings
$B$, the condition that $\hspec B$ is \hord, as defined in the
previous section, corresponds to a subtle algebro-geometric
condition. We also show that this same geometric condition can be
used to study the more familiar examples of skew and skew-Laurent
extensions of commutative algebras, which will shed new light on the
primitive spectra of these examples.

\begin{example}
\label{thcr-ex} Given a projective $k$-scheme $X$, a
$k$-automorphism $\sigma$ of $X$, and an invertible sheaf $\mc{L}$
on $X$, we defined in the introduction the \emph{twisted homogeneous
coordinate ring} $B(X, \mc{L}, \sigma) = \bigoplus_{n = 0}^{\infty}
\HB^0(X, \mc{L}_n)$, where here $\mc{L}_n = \mc{L} \otimes
\sigma^*\mc{L} \otimes (\sigma^2)^* \mc{L} \otimes \dots \otimes
(\sigma^{n-1})^*\mc{L}$ for each $n$. We always assume when making
this construction that $\mc{L}$ is \emph{$\sigma$-ample}: this means
that for every coherent sheaf $\mc{F}$ on $X$, $\HB^i(X, \mc{F}
\otimes \mc{L}_n) = 0$ for all $n \gg 0$ and $i > 0$.
\end{example}

The $\sigma$-ample hypothesis is necessary in order for the algebra
$B(X, \mc{L}, \sigma)$ to have well-behaved properties.  In
particular, in this case $B$ is noetherian, and its homogeneous
prime spectrum is completely determined by the following lemma.
\begin{lemma}
\label{thcr-primes-lem} Let $B = B(X, \mc{L}, \sigma)$ be a twisted
homogeneous coordinate ring as in Example~\ref{thcr-ex}.  Suppose
that $Y \subseteq X$ is a reduced closed subscheme such that
$\sigma(Y) = Y$ and $\sigma$ acts on the irreducible components of
$Y$ in a single cycle.  If $\mc{J}$ is the ideal sheaf on $X$
corresponding to $Y$, then $\bigoplus_{n = 0}^{\infty} \HB^0(X,
\mc{J} \otimes \mc{L}_n)$ is a homogeneous prime ideal of $B$.
Conversely, every homogeneous prime ideal of $B$ is of this form.
\end{lemma}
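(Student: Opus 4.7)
The plan is to prove both directions by exploiting the interplay between the multiplication $f \star g = f \cdot (\sigma^m)^* g$ in $B$ and vanishing of sections on $\sigma$-invariant subsets, with $\sigma$-ampleness providing the global generation needed. For the forward direction, I first check that $I(Y) := \bigoplus_n \HB^0(X, \mc{J}_Y \otimes \mc{L}_n)$ is a two-sided ideal: for $f$ vanishing on $Y$ and $s \in B_m$, the product $s \star f = s \cdot (\sigma^m)^* f$ vanishes on $Y$ because $(\sigma^m)^* f$ vanishes on $\sigma^{-m}(Y) = Y$, and $f \star s$ vanishes on $Y$ because $f$ does. For primality, label the components as $Y_0, \ldots, Y_{r-1}$ with $\sigma(Y_i) = Y_{i+1 \bmod r}$, and suppose two-sided homogeneous ideals $I, J$ contain $f \in I_m$ with $f|_{Y_i} \neq 0$ and $g \in J_n$ with $g|_{Y_j} \neq 0$. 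By $\sigma$-ampleness, $\mc{L}_s$ is globally generated for $s \gg 0$; choose $s$ large with $s \equiv j - i - m \pmod r$ and $h \in B_s$ nonvanishing on $Y_{i+m}$. Then $(f \star h) \star g \in IJ$, and its restriction to $Y_i$ is the product (pulled back along $\sigma^{\bullet}|_{Y_i}$) of $f|_{Y_i}$, $h|_{Y_{i+m}}$, and $g|_{Y_j}$, all nonzero; integrality of $Y_i$ gives $(f \star h) \star g \notin I(Y)$.

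For the converse, given a homogeneous prime $P$, let $Y$ be the reduced closed subscheme with underlying set $\bigcap_{0 \neq f \in P \text{ homog.}} V(f)$. I plan to verify in order: (a) $\sigma(Y) = Y$; (b) $P = I(Y)$; (c) $\sigma$ acts transitively on the components of $Y$. For (a), $g \star f = g \cdot (\sigma^m)^* f \in P$ vanishes on $Y$ for all $f \in P_n$, $g \in B_m$; using $\sigma$-ampleness to pick $g$ nonvanishing at any prescribed $y \in Y$ (taking $m \gg 0$ so $\mc{L}_m$ is globally generated), I conclude $f(\sigma^m(y)) = 0$, hence $\sigma^m(Y) \subseteq Y$. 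Noetherian stabilization of the chain $Y \supseteq \sigma^m(Y) \supseteq \sigma^{2m}(Y) \supseteq \cdots$ gives $\sigma^m(Y) = Y$ for large $m$, and applying this to both $m$ and $m+1$ yields $\sigma(Y) = Y$. For (c), if the components split into two proper $\sigma$-invariant pieces $Y', Y''$, then $I(Y'), I(Y'')$ are two-sided ideals by the forward direction with $I(Y') \cdot I(Y'') \subseteq I(Y) = P$ (using (b)); primality forces, say, $I(Y') \subseteq P$, giving $Y = V(P) \subseteq V(I(Y')) = Y' \subsetneq Y$, a contradiction.

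The main obstacle is step (b), the equality $P = I(Y)$. The inclusion $P \subseteq I(Y)$ is immediate from the definition of $Y$. For $I(Y) \subseteq P$, my plan is to introduce the monotone family of ideal sheaves $\mc{J}_n \subseteq \mc{O}_X$ defined by $\mc{J}_n \otimes \mc{L}_n$ being the coherent subsheaf of $\mc{L}_n$ generated by $P_n$; monotonicity $\mc{J}_n \subseteq \mc{J}_{n+m}$ comes from $P_n \star B_m \subseteq P_{n+m}$ combined with the identity $\mc{L}_n \otimes (\sigma^n)^* \mc{L}_m = \mc{L}_{n+m}$. This stabilizes by Noetherianity to a $\sigma$-invariant $\mc{J}_\infty$ with $V(\mc{J}_\infty) = V(P) = Y$. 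Combining the global generation of $\mc{J}_Y \otimes \mc{L}_n$ for $n \gg 0$ (from $\sigma$-ampleness) with primality of $P$ should force $\mc{J}_\infty = \mc{J}_Y$, hence $I(Y)_n = P_n$ for $n \gg 0$. To pass from large degrees to all degrees, for any $f \in I(Y) \setminus P$ I would show $f \star B_m \subseteq I(Y)_{\gg 0} \subseteq P$ and dually $B_m \star f \subseteq P$ for $m \gg 0$, so $(BfB)_{\geq N} \subseteq P$ for some $N$; primality of $P$ applied to $BfB$ and the two-sided ideal $B_{\geq N}$ then forces $B_{\geq N} \subseteq P$, making $V(P) = \emptyset$ (since $\mc{L}_n$ is base-point-free for $n \gg 0$), a contradiction to $Y \neq \emptyset$.
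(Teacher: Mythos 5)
Your overall architecture matches the paper's: identify the $\sigma$-invariant reduced subscheme $Y$ attached to a homogeneous prime $P$, show $P$ and $I(Y)=\bigoplus_n \HB^0(X,\mc{J}_Y\otimes\mc{L}_n)$ agree in all large degrees, and then use primality together with a product landing in $P_{\geq N}$ to push the equality down to all degrees. Your forward direction (ideal property plus primality via a connecting section $h$ on $Y_{i+m}$ chosen in the right congruence class mod $r$) is a correct fleshing-out of what the paper dismisses as straightforward, and your final ``large degrees to all degrees'' step is essentially the paper's $(bB)^N\subseteq J_{\geq N}$ trick. The problem is step (b), which is precisely where the paper leans on external machinery.

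The gap is the claim that $I(Y)_n=P_n$ for $n\gg 0$. What your construction can plausibly deliver is that $P_n$ \emph{generates} the subsheaf $\mc{J}_\infty\otimes\mc{L}_n$ for $n\gg 0$ and that $V(\mc{J}_\infty)=Y$. But even granting $\mc{J}_\infty=\mc{J}_Y$ (itself only asserted via ``should force''--- you would still need to show $\mc{J}_\infty$ is radical, and every route to that I can see circles back to the same missing ingredient), the step from ``$P_n$ generates $\mc{J}_Y\otimes\mc{L}_n$'' to ``$P_n=\HB^0(X,\mc{J}_Y\otimes\mc{L}_n)$'' is false as a general implication: a globally generated sheaf is typically generated by a proper subspace of its global sections, so nothing so far rules out $P_n\subsetneq I(Y)_n$ in every degree. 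The statement you actually need is that a graded submodule of $B$ is recovered, in all sufficiently large degrees, as the full space of sections of its associated ideal sheaf; this is exactly \cite[Theorem~3.12]{AV} together with \cite[Lemma~4.4]{AS1} (the equivalence $\rqgr B\simeq \coh X$), which the paper cites at this point. Its proof requires the cohomological half of $\sigma$-ampleness (vanishing of $\HB^i(X,\mc{F}\otimes\mc{L}_n)$ for $i>0$, $n\gg 0$), whereas your argument only ever invokes base-point-freeness. Without importing that result (or reproving a Castelnuovo--Mumford-type surjectivity statement from the vanishing hypothesis), the converse direction does not close. A minor additional remark: your concluding contradiction assumes $Y\neq\emptyset$, which fails for the irrelevant prime $B_{\geq 1}$; this edge case is really an imprecision in the lemma's statement rather than in your argument, but it is worth flagging since your proof silently excludes it.
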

\begin{proof}
The proof of the first assertion is straightforward; we prove the
second. Let $J$ be a homogeneous prime ideal of $B$.  By
\cite[Theorem~3.12]{AV} and \cite[Lemma~4.4]{AS1}, there is an ideal
sheaf $\mc{J}$ on $X$ such that $\sigma^* \mc{J} = \mc{J}$ and, for
some $N$, we have $J_n = H^0(X, \mc{J} \otimes \Lsh_n)$ for $n \geq
N$. Clearly,
\[ J \subseteq \bigoplus_{n \geq 0} H^0(X, \mc{J} \otimes \Lsh_n)= J'.\]
 If $Y$ is the subscheme of $X$ defined by $\mc{J}$,
it is easy to see that $Y$ is reduced and that $\sigma$ acts on the components of $Y$ in a single
cycle.

Let $b \in J'_m$ for some $m \geq 1$.  As $\mc{J}$ is
$\sigma$-invariant, $(bB)^N \subseteq (J')_{\geq N} = J_{\geq N}.$
Primeness of $J$ implies that $b \in J$; thus $J = J'$.
\end{proof}

Given the previous result, the structure of the homogeneous prime
spectrum of a twisted homogeneous coordinate ring (with a
$\sigma$-ample sheaf) is a purely geometric concern, and the study
of the geometry involved will occupy us for much of the later
sections of the paper.  Since we will use it several times later, we  note here that it
follows easily from the definition that $\sigma$-ampleness restricts
to any $\sigma$-invariant subscheme.  More specifically, if $Z$ is a
subscheme of $X$ with $\sigma(Z) = Z$, then given a $\sigma$-ample
sheaf $\mc{L}$ on $X$, $\mc{L} \vert_Z$ is $\sigma \vert_Z$-ample.

The next examples are likely to be quite familiar to the reader.
\begin{example}
\label{skew-ex} Let $S$ be a commutative finitely generated
$k$-algebra, with $k$-algebra automorphism $\sigma: S \to S$. Let $U
= S[t; \sigma]$ be the skew polynomial ring and let  $T = S[t,
t^{-1}; \sigma]$ be the skew-Laurent ring.  Our convention is to
write coefficients on the left; so either ring satisfies the
relations $ts = \sigma(s) t$ for all $s \in S$.  Let $X = \spec S$;
we use the same name $\sigma: X \to X$ for the scheme automorphism
of $X$ corresponding to the algebra automorphism $\sigma$.
\end{example}

Notice the similarity between the following characterization of
homogeneous primes of such rings and Lemma~\ref{thcr-primes-lem}.
\begin{lemma}
\label{skew-primes-lem} Consider Example~\ref{skew-ex}.
\begin{enumerate}
\item Suppose that $Z \subseteq X$ is a reduced closed subscheme such
that $\sigma(Z) = Z$ and $\sigma$ acts as a single cycle on the
irreducible components of $Z$.  If $I$ is the (radical) ideal of $S$
corresponding to $Z$, then $P = \bigoplus_{n \in \mb{Z}} I t^n$ is a
homogenous prime ideal of $T$; moreover, all homogeneous prime
ideals of $T$ have this form.

\item The homogeneous primes of $U$ are exactly those of the form $P
\cap U = \bigoplus_{n \geq 0} I t^n$ where $P$ is as in part (1),
together with those of the form  $Q = J \oplus St \oplus St^2 \oplus
\dots$ where $J$ is any prime ideal of $S$.
\end{enumerate}
\end{lemma}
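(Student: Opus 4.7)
The plan is to describe all homogeneous ideals of each ring explicitly, and then to single out the prime ones.

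For part (1), first I would pin down the shape of a homogeneous ideal $P$ of $T = S[t, t^{-1}; \sigma]$. Writing $P = \bigoplus_{n \in \mb{Z}} I_n t^n$ with $I_n \subseteq S$, the four conditions $tP, Pt, t^{-1}P, Pt^{-1} \subseteq P$ (using the commutation rule $t^n s = \sigma^n(s) t^n$) translate into the inclusions $\sigma(I_n) \subseteq I_{n+1}$, $I_n \subseteq I_{n+1}$, $\sigma^{-1}(I_n) \subseteq I_{n-1}$, and $I_n \subseteq I_{n-1}$, and these force all $I_n$ to equal a common $\sigma$-invariant ideal $I := I_0 \subseteq S$. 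So homogeneous ideals of $T$ correspond bijectively to $\sigma$-invariant ideals of $S$ via $I \leftrightarrow P_I := \bigoplus_n I t^n$, and $T/P_I \cong (S/I)[t, t^{-1}; \bar{\sigma}]$. The remaining step is to invoke the classical fact that a skew-Laurent ring $R[t, t^{-1}; \phi]$ over a commutative Noetherian ring $R$ is prime if and only if $R$ is reduced and $\phi$ permutes the minimal primes of $R$ in a single cycle. Geometrically, this is exactly the condition that $Z = V(I) \subseteq X$ is reduced and $\sigma$ acts as a single cycle on its irreducible components.

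For part (2), the approach is to case-split on whether $t \in P$. If $t \in P$, then the two-sided ideal $tU = Ut = \bigoplus_{n \geq 1} St^n$ is contained in $P$, so $P = J \oplus \bigoplus_{n \geq 1} St^n$ where $J := P \cap S$, and $U/P \cong S/J$; primeness of $P$ is then equivalent to $J$ being a prime ideal of $S$. If $t \notin P$, I would first observe that no positive power of $t$ lies in $P$ either: from $t^k \in P$ and $tU\, t^{k-1} \subseteq Ut^k \subseteq P$, primeness gives $t \in P$ or $t^{k-1} \in P$, and induction on $k$ finishes. Hence $P$ is disjoint from the Ore set $\{1, t, t^2, \dots\}$, so localization produces a homogeneous prime $PT$ of $T$ with $P = PT \cap U$. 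Applying part (1) now expresses these primes in the stated form $\bigoplus_{n \geq 0} It^n$, for $I$ of the type in part (1).

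The main obstacle is the classical commutative-coefficient skew-Laurent primeness statement invoked in part (1). For the easy direction, if $\bar{\sigma}$ acts with more than one orbit on the minimal primes of $R := S/I$, partition the minimal primes into a proper nonempty $\bar{\sigma}$-invariant subset $\mc{M}$ and its complement, yielding two $\bar{\sigma}$-invariant ideals $I(\mc{M}) := \bigcap_{\mf{p} \in \mc{M}} \mf{p}$ and $I(\mc{M}^c)$ of $R$. Both are nonzero (a proper intersection of minimal primes cannot lie inside an omitted minimal prime, by the standard prime-avoidance argument), their product lies in $I(\mc{M}) \cap I(\mc{M}^c) = 0$ by reducedness, and extending by powers of $t$ gives a zero product of nonzero homogeneous ideals of $T/P_I$. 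The reverse direction is the standard argument: given a purported zero product $aT'b = 0$ in $T' := T/P_I$ with $a, b \neq 0$, the $\bar{\sigma}$-closures of the $R$-ideals generated by the coefficients of $a$ and $b$ produce two nonzero $\bar{\sigma}$-invariant ideals of $R$ with zero product, via a leading-term argument.
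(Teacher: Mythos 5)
Your argument is correct, and it follows the same route the paper takes: the paper simply cites Jordan [Jo, Lemmas 2.3, 2.4] for the classification of homogeneous primes of $T$ (which rests on exactly the reduction you give, namely that homogeneous ideals of $T$ are $\bigoplus_n I t^n$ for $\sigma$-invariant $I$, plus the $\sigma$-prime criterion for reduced Noetherian commutative coefficient rings), and deduces the statement for $U$ from the observation that $T$ is the localization of $U$ at $\{1,t,t^2,\dots\}$, which is precisely your case analysis on whether $t\in P$. Your proposal just supplies the details that the paper leaves to the references.
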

\begin{proof}
This result is well-known.  Cf. \cite[Lemma 2.3, 2.4]{Jo} for the
claim about $T$. The claim for $U$ follows since $T$ is the
localization of $U$ at the multiplicative system $\{ 1, t, t^2,
\dots \}$.
\end{proof}

Primitivity of prime rings of the form $T$ is characterized in
\cite{Jo}, and for prime rings of the form $U$ in \cite{LM}; we
review the criteria in Section~\ref{summary-sec}. However, it is not
clear from these characterizations (which go by constructing
faithful simple modules in the primitive case) when the
Dixmier-Moeglin equivalence holds for these rings.   Our methods
will allow us to address this question for all of the examples we
have given above, through a study of the geometry of $(X, \sigma)$.
The following definitions will be useful for the geometric point of
view.

\begin{definition}
\label{sigma-irr-def} Let $X$ be a scheme of finite type over $k$,
with $k$-automorphism $\sigma: X \to X$.  A closed subset $Z$ of $X$
is called \emph{$\sigma$-invariant} if $\sigma(Z) = Z$ and
\emph{$\sigma$-periodic} if $\sigma^n(Z) = Z$ for some $n \geq 1$. A
$\sigma$-invariant closed subset $Z$ is called
\emph{$\sigma$-irreducible} if there does not exist a decomposition
$Z = Z_1 \cup Z_2$ with $Z_1, Z_2 \subsetneq Z$ closed
$\sigma$-invariant subsets. If $X$ is itself $\sigma$-irreducible, a
\emph{maximal $\sigma$-irreducible} subset $Z$ of $X$ is a maximal
element of the set of all \emph{proper} $\sigma$-irreducible subsets
of $X$ under inclusion.

For use in Section~\ref{base-ext-sec}, we note that the notions
defined here make sense for an arbitrary ground field $k$.
\end{definition}
Notice that a $\sigma$-invariant subset $Z \subseteq X$ is
$\sigma$-irreducible if and only if $\sigma$ permutes the finitely
many irreducible components of $Z$ in a single cycle.

\begin{definition}
\label{geom-ord-def} Let $X$ be a scheme of finite type over $k$,
with $k$-automorphism $\sigma: X \to X$.  We say that the pair $(X,
\sigma)$ is \emph{\ord} if it satisfies the following property: for
every $\sigma$-irreducible closed subset $Z \subseteq X$, $(Z,
\sigma \vert_Z)$ has either finitely many or uncountably many
maximal $\sigma$-irreducible closed subsets.
\end{definition}

In the main result of this section, we now show that for all of the
examples $A$ above, the property of $(X, \sigma)$ being \ord\ is the
geometric equivalent of the property of $\hspec A$ being \hord. We
also verify that all of these examples satisfy the other hypotheses
of Proposition~\ref{hord-DM-prop}, so that to prove the
DM-equivalence for them it will be enough to study the geometry of
$(X, \sigma)$.
\begin{proposition}
\label{geom-primes-prop} Let $A$ be any one of the following
$k$-algebras:
\begin{enumerate}
\item A twisted homogeneous coordinate ring $A = B = B(X, \mc{L}, \sigma)$
as in Example~\ref{thcr-ex};
\item A skew-Laurent ring $A =T = S[t, t^{-1}; \sigma]$ as in Example~\ref{skew-ex} ; or
\item A skew polynomial ring $A =U = S[t; \sigma]$ as in
Example~\ref{skew-ex}.
\end{enumerate}
In each case, we have the corresponding geometric data $(X,
\sigma)$; we remind the reader that in cases $(2)$ and $(3)$, $X =
\spec S$.  Then $(X, \sigma)$ is \ord\ if and only if $\hspec A$ is
\hord, and in this case $\spec A$ is \ord\ and $A$ satisfies the
Dixmier-Moeglin equivalence.
\end{proposition}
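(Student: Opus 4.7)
The plan is to reduce to Proposition~\ref{hord-DM-prop} by translating ordinariness of $(X, \sigma)$ into h-ordinariness of $\hspec A$, and separately verifying the remaining hypotheses of that proposition (noetherianity, countable generation, DCC on homogeneous primes, and the birationally PI property of factors by homogeneous primes).

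First, I would check the routine hypotheses. Each $A$ is noetherian (using $\sigma$-ampleness for $B$, and the standard theory of (skew-Laurent) polynomial extensions for $T$ and $U$) and countably generated over $k$. For DCC on homogeneous primes, the key input is Lemmas~\ref{thcr-primes-lem} and \ref{skew-primes-lem}, which identify homogeneous primes with certain closed subsets of the noetherian scheme $X$ (plus an extra family of primes in the $U$ case); any descending chain of homogeneous primes must therefore stabilize, with a small case analysis for $U$ to handle chains mixing the two types of homogeneous primes in Lemma~\ref{skew-primes-lem}(2). For the birationally PI property, for a homogeneous prime $P$ of $A$ corresponding to a $\sigma$-irreducible subvariety $Y$ with cyclically permuted components $Y_1, \ldots, Y_m$, I would argue that the graded quotient ring of $A/P$ has the shape $M_m(K[t, t^{-1}; \tau])$ (or a matrix ring over a field in the trivially graded case for $U$) with $K$ a commutative field such as $k(Y_1)$, making $A/P$ birationally PI.

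The heart of the argument is to identify h-height one primes of $A/P$ with maximal $\sigma$-irreducible proper subsets of $Y$. In cases (1) and (2), Lemmas~\ref{thcr-primes-lem} and \ref{skew-primes-lem}(1) supply an inclusion-reversing bijection between homogeneous primes of $A$ containing $P$ and $\sigma$-irreducible closed subsets of $Y$, so h-height one primes of $A/P$ correspond exactly to maximal $\sigma$-irreducible proper subsets of $Y$. Thus h-ordinariness of $\hspec A$ translates word for word into ordinariness of $(X, \sigma)$.

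The main obstacle is case (3). Lemma~\ref{skew-primes-lem}(2) produces a second family of homogeneous primes $Q_J = J \oplus St \oplus St^2 \oplus \cdots$ indexed by arbitrary primes $J$ of $S$, not just $\sigma$-stable ones. I would argue that $Q_0 = (t)$ is always an h-height one prime of $U/(P \cap U)$ (no first-type prime lies properly below $Q_0$, since first-type primes have nonzero degree $0$ component), while any $Q_J$ with $J \neq 0$ strictly contains $Q_0$ and so is not h-height one; the extra family thus contributes exactly one extra h-height one prime, which does not affect whether the total count is finite or uncountable. Moreover, each factor $U/Q_J$ is simply the commutative domain $S/J$, whose height one primes form a set of cardinality zero (when $S/J$ is a field) or uncountable (by standard facts about finitely generated commutative domains over an uncountable algebraically closed field), so these extra factors are harmless for h-ordinariness. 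Combining all of the above, the equivalence between h-ordinariness of $\hspec A$ and ordinariness of $(X, \sigma)$ holds in every case, and Proposition~\ref{hord-DM-prop} then immediately yields that $\spec A$ is ordinary and that $A$ satisfies the Dixmier-Moeglin equivalence.
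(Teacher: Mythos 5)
Your proposal is correct and follows essentially the same route as the paper: reduce to Proposition~\ref{hord-DM-prop} via the inclusion-reversing correspondences of Lemmas~\ref{thcr-primes-lem} and \ref{skew-primes-lem}, verify the noetherian/DCC/birationally PI hypotheses case by case, and handle the extra degree-zero family of primes in the $S[t;\sigma]$ case separately (the paper phrases this as $\hspec U$ being a disjoint union of $\hspec T$ and $\spec S$, with $\spec S$ \ord\ by Lemma~\ref{PI-lem}). The only quibble is in case (3): when the $\sigma$-irreducible set $Z$ corresponding to $P$ is not irreducible, the ideal $(t)$ is not prime in $U/(P\cap U)$, and the second family contributes one h-height one prime $Q_{J_i}$ for each minimal prime $J_i$ of $I$ --- i.e.\ finitely many rather than exactly one, which is what the paper asserts and which still leaves the finite-versus-uncountable dichotomy unaffected.
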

\begin{proof}
In cases (1) and (2), Lemmas~\ref{thcr-primes-lem} and
\ref{skew-primes-lem} show that there is a one-to-one
inclusion-reversing correspondence between the homogeneous primes of
$A$ and $\sigma$-irreducible subschemes of $X$.  Then it follows
directly from definitions that $(X, \sigma)$ is \ord\ if and only if
$\hspec A$ is \hord.   Now consider case (3), where $A = U = S[t;
\sigma] \subseteq T = S[t, t^{-1}; \sigma]$. Lemma
\ref{skew-primes-lem} shows that the topological space $\hspec U$ is
the disjoint union of a subspace $W_1 \cong \hspec T$ and a subspace
$W_2 \cong \spec S$.   Moreover, for a prime $P \in W_2$, the primes
of height one over $P$ lie again in $W_2$; while for a prime $P$ in
$W_1$, the primes of height one over $P$ are the primes of height
one over $P$ in $W_1$ together with finitely many primes from $W_2$.
Since $\spec S$ is automatically \ord\ by Lemma~\ref{PI-lem}, it
follows that $\hspec U$ is \hord\ if and only if $\hspec T$ is
\hord, which is if and only if $(X, \sigma)$ is \ord\ as we have
already seen.

To finish the proof, we will apply Proposition~\ref{hord-DM-prop}.
It thus remains only to verify the hypotheses of that proposition,
which we do case by case.

(1) Let $A = B = B(X, \mc{L}, \sigma)$ as in Example~\ref{thcr-ex}.
Since $\mc{L}$ is $\sigma$-ample, $B$ is noetherian \cite[Theorem
1.2]{Ke1}. Since $B$ is noetherian and $\mb{N}$-graded with $B_0 =
k$ and $\dim_k B_n < \infty$ for all $n \geq 0$, it follows that the
right $B$-module $k = B/B_{\geq 1}$ has a minimal free resolution by
free modules of finite rank, say
\[
\dots \to B^{n_1} \to B^{n_0} \to B \to k \to 0,
\]
where $n_0$ is the minimal number of generators of $B$ as a graded
$k$-algebra and $n_1$ is the minimum number of relations; see, for
example, \cite[p. 42-43]{ATV1}.  Thus $B$ is finitely presented. It
is clear that $B$ has DCC on homogeneous prime ideals, using
Lemma~\ref{thcr-primes-lem} and the fact that $X$ has finite
dimension.

Now consider a homogeneous prime factor ring $B/P$ of $B$, where $P$
corresponds via Lemma~\ref{thcr-primes-lem} to the
$\sigma$-irreducible closed reduced subscheme $Z \subseteq X$ with
ideal sheaf $\mc{I}$.   Then there is a natural homomorphism $\phi:
B = B(X, \mc{L}, \sigma) \to B' = B(Z, \mc{L} \vert_Z, \sigma
\vert_Z)$ given by restriction of sections, with kernel $P$.  It is
easy to see that $\phi$ is surjective in large degree, by the
definition of $\sigma$-ampleness of $\mc{L}$.  Thus the graded
quotient ring of $B/P$ is isomorphic to the graded quotient ring of
$B'$.  But $Q_{\rm gr}(B') \cong R[t, t^{-1}; \sigma]$, where $R$ is
the product of the function fields of the irreducible components of
$Z$, and $\sigma$ is the induced automorphism of $R$ (see, for
example, the proof of \cite[Proposition 3.5]{RZ}.)  Thus, every
homogeneous prime factor ring of $B$ is birationally commutative, so
certainly birationally PI.

(2), (3)  Since $S$ is noetherian and finitely generated, both  $T$
and $U$ are noetherian, finitely generated $k$-algebras.  It is
immediate from Lemma~\ref{skew-primes-lem} that $T$ and $U$ have DCC
on homogeneous primes.  The fact that all homogeneous prime factor
rings of $T$ and $U$ are birationally commutative is also clear from
Lemma~\ref{skew-primes-lem}.
\end{proof}

We remark that the same idea as in the previous theorem applies
equally well to many other birationally commutative algebras, for
example the \naive\ blowups studied in \cite{RS1}, or the geometric
idealizer rings of \cite{Si}; in each case, the given ring $A$ is a
subring of a twisted homogeneous coordinate ring $B$ for which one
can show that $\hspec A$ and $\hspec B$ are isomorphic.  We have
chosen to focus here only on a few representative examples.

\section{Orbital characterization of ordinary automorphisms}

\label{orbital-char-sec}

Hypothesis~\ref{k-hyp2} remains in force throughout this section.
Our eventual goal is to apply Proposition~\ref{geom-primes-prop} to
show that many twisted homogeneous coordinate rings satisfy the
DM-equivalence. In order to do so, we need first to understand
better the property of $(X, \sigma)$ being \ord. Clearly, a
condition on the cardinality of maximal $\sigma$-irreducible subsets
is not very natural from a geometric standpoint.  In this section we
show that the \ord\ condition can be restated in a much nicer way,
in terms of the orbits of the $\sigma$-action on $X$. This new
characterization will be much more readily amenable to further study
using geometric methods.

We first record for reference a standard result.
\begin{lemma}
\label{amitsur-trick-lem} Let $X$ be an integral scheme of finite
type over $k$.  Then $X$ cannot be written as a union of countably
many proper closed subsets.
\end{lemma}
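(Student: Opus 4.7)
The plan is to use the standard "Amitsur trick" and reduce to $\mathbb{A}^n_k$, where uncountability of $k$ gives the result by induction on $n$. Let me spell out the three reduction/induction steps.

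First I would reduce to the affine case. Choose a nonempty open affine $U\subseteq X$. Since $X$ is irreducible, for each proper closed $Z_i\subsetneq X$ the complement $X\setminus Z_i$ is a nonempty open, so it meets $U$; hence $U\cap Z_i$ is a proper closed subset of $U$. A hypothetical covering $X=\bigcup_i Z_i$ therefore restricts to a covering $U=\bigcup_i (U\cap Z_i)$ of $U$ by countably many proper closed subsets, and $U$ is still integral and of finite type over $k$. So I may replace $X$ by $U$ and assume $X=\spec R$ with $R$ a finitely generated $k$-domain.

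Next I would pass to affine space via Noether normalization: choose a finite injection $k[t_1,\dots,t_n]\hookrightarrow R$, giving a finite surjection $\pi\colon X\to\mathbb{A}^n_k$. Since $\pi$ is finite it is closed and preserves dimension, and since $X$ is integral any proper closed $Z_i$ has $\dim Z_i<n$; hence each $\pi(Z_i)$ is a proper closed subset of $\mathbb{A}^n_k$. Surjectivity of $\pi$ then gives $\mathbb{A}^n_k=\bigcup_i \pi(Z_i)$, so it is enough to derive a contradiction in the case $X=\mathbb{A}^n_k$ with each $Z_i\subseteq V(f_i)$ for some nonzero $f_i\in k[t_1,\dots,t_n]$.

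Finally I would induct on $n$. For $n=1$, each $V(f_i)$ is finite, so $\bigcup_i V(f_i)$ has only countably many $k$-points, contradicting $|\mathbb{A}^1(k)|=|k|>\aleph_0$. For $n>1$, project by the last coordinate $\pi\colon\mathbb{A}^n_k\to\mathbb{A}^{n-1}_k$ and write $f_i=\sum_j g_{ij}(t_1,\dots,t_{n-1})\,t_n^j$. Let $W_i\subseteq\mathbb{A}^{n-1}_k$ be the common zero locus of the $g_{ij}$; this is a proper closed subset because $f_i\neq 0$. By the inductive hypothesis applied to $\mathbb{A}^{n-1}_k$, there exists a $k$-point $b\in\mathbb{A}^{n-1}(k)\setminus\bigcup_i W_i$. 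For this $b$ each restriction $f_i|_{\pi^{-1}(b)}$ is a nonzero polynomial in $t_n$, so $V(f_i)\cap\pi^{-1}(b)$ is finite; hence $\bigcup_i V(f_i)$ meets the fiber $\pi^{-1}(b)\cong\mathbb{A}^1(k)$ in a countable set, and since $k$ is uncountable there is a $k$-point of $\mathbb{A}^n_k$ outside every $V(f_i)$, contradicting the assumed cover.

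There is no real obstacle here: the only essential input is the uncountability of $k$, which is built into Hypothesis \ref{k-hyp2}. The rest is purely bookkeeping: checking that affine restriction and Noether normalization both preserve the "proper closed" condition on the $Z_i$, which uses integrality of $X$ (irreducibility to ensure $U\cap Z_i\neq U$, and dimension-count via finiteness of $\pi$ to ensure $\pi(Z_i)\neq\mathbb{A}^n_k$).
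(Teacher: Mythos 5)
Your proof is correct and follows the same route the paper takes, except that where the paper simply reduces to the affine case and cites \cite[Corollary 3.4]{Bell} for that case, you supply the affine argument yourself via Noether normalization and induction on $n$ using the uncountability of $k$ (Hypothesis~\ref{k-hyp2}). The only cosmetic point is that your induction is really carried on the slightly stronger statement ``there is a $k$-point of $\mathbb{A}^n$ outside the countable union,'' which is what your $n=1$ base case and your inductive step actually establish, so the argument is self-consistent as written.
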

\begin{proof}
This is a well-known consequence of the uncountability of $k$.  A
proof in case $X$ is affine may be found in \cite[Corollary
3.4]{Bell}, and the general case follows immediately from this.
\end{proof}

Suppose that $X$ is an integral scheme for which $(X, \sigma)$ is
\ord.  If $X$ has finitely many maximal $\sigma$-irreducible
subsets, then the picture is quite clear:  the union of these
finitely many sets is a $\sigma$-invariant closed set $Z$ containing
all proper $\sigma$-invariant closed sets of $X$, and for every
point $x \not \in Z$, $x$ lies on a dense $\sigma$-orbit.  On the
other hand, if $X$ has uncountably many maximal $\sigma$-irreducible
subsets, it is not immediately clear how to picture $\sigma$.  For
example, can $X$ also have some dense $\sigma$-orbits?  In the
following theorem, we show that in the second case the orbital
picture is also clear: surprisingly, $X$ must be completely covered
by $\sigma$-invariant codimension-1 subschemes arising as the fibers
of a $\sigma$-invariant rational function.  In particular, $X$ has
no dense $\sigma$-orbits.  This proves part of
Theorem~\ref{main-geom-thm} from the introduction.

\begin{theorem}
\label{unc-irr-thm} Assume $k$ is uncountable and algebraically
closed. Let $X$ be an integral scheme of finite type over $k$, with
$\sigma: X \to X$ a $k$-automorphism and $\sigma: k(X) \to k(X)$ the
induced automorphism of the field of rational functions. Then the
following are equivalent:
\begin{enumerate}
\item There exists a non-constant $\sigma$-invariant rational function $f \in
k(X)$.
\item There exists a non-constant $\sigma^m$-invariant rational
function $f \in k(X)$ for some $m \geq 1$.
\item $X$ has no dense $\sigma$-orbits.
\item $X$ has uncountably many maximal $\sigma$-irreducible closed
subsets.
\item $X$ has uncountably many $\sigma$-irreducible closed subsets of codimension-1.
\end{enumerate}
\end{theorem}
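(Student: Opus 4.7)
The plan is to prove the five conditions are equivalent by establishing the trivial $(1) \Leftrightarrow (2)$, the ``easy'' consequences $(1) \Rightarrow (3)$, $(1) \Rightarrow (5)$, $(5) \Rightarrow (4)$, and $(3) \Rightarrow (4)$, and finally closing the cycle via the ``hard'' implication $(4) \Rightarrow (1)$. For $(1) \Leftrightarrow (2)$, the forward direction is trivial. For the converse, if $f$ is nonconstant and $\sigma^m$-invariant, I will form the elementary symmetric functions of $\{f, \sigma(f), \ldots, \sigma^{m-1}(f)\}$, each of which is $\sigma$-invariant. If all of them lay in $k$, then $f$ would be algebraic over $k$; but $k$ is algebraically closed in $k(X)$ (as $k$ is algebraically closed and $X$ is integral), so $f \in k$, contradicting nonconstancy. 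Hence at least one symmetric function gives the desired nonconstant $\sigma$-invariant.

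Given a nonconstant $\sigma$-invariant $f$, I will view it as a rational map $f \colon X \dra \PP^1$. For $(1) \Rightarrow (3)$: if some orbit $\mc{O}_x$ were Zariski dense, then where $f$ is defined, $f(\sigma^n x) = f(x)$ would force $f$ constant on a dense set, hence globally, a contradiction. For $(1) \Rightarrow (5)$: over the uncountably many $c \in k \subseteq \PP^1$ that are not critical values, the fiber $f^{-1}(c)$ is a $\sigma$-invariant closed subset of pure codimension $1$. Grouping its finitely many prime components into $\sigma$-cycles yields one or more codimension-$1$ $\sigma$-irreducible subsets per fiber, and these are disjoint across distinct $c$, giving uncountably many codimension-$1$ $\sigma$-irreducibles.

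For $(5) \Rightarrow (4)$ I will verify that any codimension-$1$ $\sigma$-irreducible closed subset $Z \subsetneq X$ is already maximal among proper $\sigma$-irreducibles. Indeed, a proper $\sigma$-irreducible $W \supsetneq Z$ must have all components of a common dimension (as $\sigma$ cycles them) and must contain the components of $Z$, so $W$ is also of pure codimension $1$; but then $\sigma$ would have to permute cyclically the union of $Z$'s components with the new ones, which is incompatible with the cycle already acting on $Z$. For $(3) \Rightarrow (4)$: if no orbit is dense, every orbit closure $\overline{\mc{O}_x}$ is proper and $\sigma$-irreducible (its irreducible components form a single $\sigma$-cycle), and by Noetherianness it lies in a maximal $\sigma$-irreducible closed subset. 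Hence $X$ is the union of its maximal $\sigma$-irreducible subsets, and Lemma~\ref{amitsur-trick-lem} prevents a countable such cover.

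The main obstacle I anticipate is closing the cycle with $(4) \Rightarrow (1)$. The plan is a parameter-scheme argument: sort the uncountably many maximal $\sigma$-irreducibles $\{M_\alpha\}$ by common dimension $d$ and common $\sigma$-cycle length $r$, pigeonhole to retain an uncountable subfamily, and then view the irreducible components of these as $k$-points of an appropriate Chow variety (or Hilbert scheme) of $d$-dimensional subvarieties of $X$. Since a finite-type $k$-scheme has only countably many $k$-points within any zero-dimensional part, the uncountability forces a positive-dimensional sub-family $T$, whose universal closed subscheme $\mc{Z} \subset T \times X$ has $\sigma$-invariant fibers. The resulting ``incidence'' rational map $X \dra T$ sending a generic $x$ to the cycle containing it is then $\sigma$-invariant, and pulling back a nonconstant rational function on $T$ produces the desired element of $k(X)^\sigma$. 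The delicate substeps I expect to require care are (i) isolating a positive-dimensional moduli component rather than uncountably many isolated points scattered across NS-classes, and (ii) ensuring the incidence map is dominant, i.e.\ that the $M_\alpha$ generically cover $X$; a point lying in no $M_\alpha$ would necessarily have dense $\sigma$-orbit, so the residual case ``dense orbit coexisting with uncountably many maxes'' may need to be excluded by a separate specialization or Hilbert-scheme rigidity argument.
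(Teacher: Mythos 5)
Your handling of the easy implications is correct and essentially matches the paper: $(1)\Rightarrow(3)$ and $(1)\Rightarrow(5)$ via the fibers of $f\colon X\dra\PP^1$, $(5)\Rightarrow(4)$ because a codimension-one $\sigma$-irreducible is automatically maximal, and $(3)\Rightarrow(4)$ from Lemma~\ref{amitsur-trick-lem}. Your symmetric-function argument for $(2)\Rightarrow(1)$ is a valid direct alternative to the paper, which instead routes $(2)$ through $(3)$ and $(5)$ and closes the cycle at $(4)\Rightarrow(1)$.

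The proof stands or falls with $(4)\Rightarrow(1)$, and there your proposal is a plan rather than a proof: the ``delicate substeps'' you flag are precisely where the content of the theorem lives, and you give no argument for them. Concretely: (a) even granting a positive-dimensional family $\mc{Z}\subset T\times X$ of $\sigma$-invariant cycles, the incidence assignment $X\dra T$ is not automatically a rational map --- a general $x$ may lie on several members $\mc{Z}_t$, or on a positive-dimensional subfamily of them if $\dim T+d>\dim X$, so you must symmetrize over the fibers of $I=\{(t,x):x\in\mc{Z}_t\}\to X$, and the positive-dimensional case needs a new idea; (b) if you parametrize irreducible components rather than whole cycles, $\sigma$ induces a nontrivial automorphism $\tau$ of $T$ and you only get $\pi\sigma=\tau\pi$, so you would then need a $\tau$-invariant function on $T$, which is uncomfortably close to circular; (c) your worry (ii) is actually not the obstruction: the closure of the union of infinitely many distinct maximal $\sigma$-irreducibles is all of $X$ (a proper $\sigma$-invariant closed set is a finite union of $\sigma$-irreducibles and so contains only finitely many maximal ones), so the projection $I\to X$ is dominant --- but this does not repair (a) or (b). The paper's argument is entirely different and avoids moduli spaces: fix a countable $k$-basis $r_1,r_2,\dots$ of an affine coordinate ring, attach to each point $x$ on a non-dense orbit the shortest combination $g_x=\sum_{i\le p}a_ir_i$ vanishing on $\overline{\mc{O}_x}$, pigeonhole on $p$, and show by evaluating at the resulting uncountable dense set of points that $W=\operatorname{span}_K\{(r_1^{\sigma^n},\dots,r_p^{\sigma^n}):n\in\ZZ\}$ has dimension exactly $p-1$ over $K=k(X)$; since $W^{\sigma}=W$, a spanning vector $F$ of the orthogonal complement satisfies $F^{\sigma}=\lambda F$, and a ratio of its coordinates is the desired nonconstant $\sigma$-invariant function. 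You should either adopt an argument of this kind or supply genuine proofs for (a) and (b).
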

\begin{proof}
$(1) \implies (2)$ is immediate.

$(2) \implies (3)$:  Suppose that $\sigma^m(f) = f \circ \sigma^m =
f$ for some $m \geq 1$, and let $U$ be the largest open set of $X$
on which the rational function $f$ is defined.
 For each $a \in k$, $f^{-1}(a)$ is a
$\sigma^m$-invariant closed subset of $U$, which is of codimension-1
because $f$ is nonconstant. The set $U$ is covered by such fibers of
$f$; moreover, $X \setminus U$ is also a $\sigma^m$-invariant
codimension-1 closed set.   So $X$ is covered by uncountably many
$\sigma$-periodic codimension-1 closed sets, and certainly then
there are no dense $\sigma$-orbits.

This also shows that $(2) \implies (5)$, and $(5) \implies (4)$ is
obvious.

$(3) \implies (4)$:  This is a consequence of
Lemma~\ref{amitsur-trick-lem}.

$(4) \implies (1)$: Let $\Omega$ be an uncountable set of maximal
$\sigma$-irreducible closed subsets of $X$.  Pick any open affine
subset $U = \spec R$ of $X$, where $R$ is a domain finitely
generated over $k$.  Let $K = k(X)$, the fraction field of $R$. For
each $n \in \ZZ$, put $V_n = \sigma^n(U)$ and $Y_n = X \setminus
V_n$. We claim that $\Omega$ contains an uncountable subset
$\Omega'$ so that for any $Z \in \Omega'$, we have that $Z
\not\subseteq \bigcup_{n\in\ZZ} Y_n$.  To see this, note that any
finite union of some of the  $Y_n$ can contain at most finitely many
of the sets in $\Omega$, as the closure of the union of any infinite
collection of sets in $\Omega$ is $\sigma$-invariant and so equal to
$X$.

Without loss of generality, we may replace $\Omega$ by $\Omega'$.
Then as $\bigcup_{n \in \ZZ} Y_n$ is $\sigma$-invariant, we see that
any irreducible component of any $Z \in \Omega$ intersects $V =
\bigcap_{n \in \ZZ} V_n$ nontrivially.

Fix once and for all a countable $k$-basis $r_1, r_2, r_3, \dots$
for $R$. Let $T$ be the union of the closed points in $Z \cap V$ as
$Z$ ranges over all sets in $\Omega$.  For each $x \in T$, let $H =
H_x$ be the closure in $X$ of $\mc{O}_x = \{ \sigma^n(x) | n \in
\mb{Z} \}$. Since there is some $Z \in \Omega$ so that $H_x \cap U
\subseteq Z \cap U$  is a proper nonempty closed subset of $U$,
there is some nonzero regular function $g \in R$ such that $g$
vanishes along $H_x$. Write $g = \sum_{i = 1}^m a_i r_i$. We may
choose such a $g$ with minimal possible $m$; then $g$ is uniquely
determined by $x$ up to scalar and we put $g = g_x$ and $m = m_x$.
For each $p \geq 1$, let $T_p = \{x \in T | m_x = p \}$. Note that
$T_p$ is $\sigma$-invariant.
 For any $Z
\in \Omega$, note also that
\[
Z = \bigg(\bigcup_{p \geq 1} (Z \cap T_p)\bigg) \bigcup
\bigg(\bigcup_{n \in \mb{Z}} (Z \cap Y_n)\bigg).
\]
Since $\sigma$ permutes the components of $Z$ in a cycle and each
subset $Z \cap T_p$ is $\sigma$-invariant, we see by
Lemma~\ref{amitsur-trick-lem} that $Z \cap T_p$ is dense in $Z$ for
some $p \geq 1$. As $\Omega$ is uncountable, we can find some fixed
$p \geq 1$ such that $Z \cap T_p$ is dense in $Z$ for uncountably
many $Z \in \Omega$. Letting $S = T_p$ for this $p$, we see that $S$
has the following properties: $S$ is uncountable; $S \subseteq V$;
$S$ is dense in $X$ (since its closure contains infinitely many
maximal $\sigma$-irreducibles); and for each $x \in S$, $m_x = p$ .

For any $n \in \mb{Z}$ we write $r_i^{\sigma^n}$ for the rational
function $\sigma^n(r_i) = r_i \circ \sigma^n \in K$.  For each $n
\in \mb{Z}$ let $v_n = (r_1^{\sigma^n}, r_2 ^{\sigma^n}, \dots,
r_p^{\sigma^n}) \in K^p$, and let $W = \operatorname{span}_K \{ v_n
| n \in \mb{Z} \}$. Note that for any $x \in S$, we can evaluate
$v_n$ at $x$, obtaining $v_n(x)= (r_1(\sigma^n(x)),
r_2(\sigma^n(x)), \dots, r_p(\sigma^n(x))) \in k^p$.  Let $W(x) =
\operatorname{span}_k \{ v_n(x) | n \in \mb{Z} \}$. Now by
construction, $g_x = \sum_{i = 1}^p a_i r_i$ is the unique (up to
scalar) nonzero $k$-linear combination of $r_1, \dots, r_p$ which
vanishes at $\sigma^n(x)$ for all $n \in \mb{Z}$.  Thus $(a_1,
\dots, a_p)$ spans the orthogonal complement to $W(x)$ in $k^p$, and
so $\dim_k W(x) = p-1$.  In fact, then we can find a fixed sequence
of integers $n_1, n_2, \dots, n_{p-1}$ and a dense subset $S'
\subseteq S$ such that $v_{n_1}(x), \dots, v_{n_{p-1}}(x)$ is a
$k$-basis for $W(x)$, for all $x \in S'$ (where here we use
Lemma~\ref{amitsur-trick-lem} again.)

We claim now that $\dim_K W = p-1$.  First, suppose that $v_{n_1},
\dots, v_{n_{p-1}}$ are $K$-linearly dependent, say $\sum_{j =
1}^{p-1} b_j v_{n_j} = 0$ with $b_j \in K$ not all zero.  Since $S'$
is dense, we can choose $x \in S'$ such that $b_j$ is defined at $x$
for all $1 \leq j \leq p-1$ and $b_j(x) \neq 0$ for some $j$.  Then
evaluating at $x$ we have $\sum_{j = 1}^{p-1} b_j(x) v_{n_j}(x) = 0$
and it follows that $v_{n_1}(x), \dots, v_{n_{p-1}}(x)$ are
$k$-linearly dependent, a contradiction.  Thus $v_{n_1}, \dots,
v_{n_{p-1}}$ are linearly independent over $K$.   On the other hand,
suppose that $\dim_K W = p$, so we may choose $n_p$ such that
$v_{n_1}, \dots, v_{n_p}$ is a $K$-basis for $W$. If we consider the
$p \times p$ matrix $M = (M_{ij}) = (r_j^{\sigma^{n_i}}) \in
M_p(K)$, then $0 \neq \det M \in K$. However, we know from the
previous paragraph that for any $x \in S$, $v_{n_1}(x), \dots,
v_{n_p}(x)$ are linearly dependent over $k$ and so $(\det M)(x) =
0$. Since $S$ is dense in $X$ and $\det M$ is a nonzero rational
function, this is a contradiction.  So $\dim_K W = p-1$ as claimed.

Pick $F = (f_1, f_2, \dots, f_p) \in K^p$ which spans the orthogonal
complement to $W$ in $K^p$.   The vector $F^{\sigma} =
(f_1^{\sigma}, f_2^{\sigma}, \dots, f_p^{\sigma})$ is in the
orthogonal complement to $W^{\sigma}$, but by construction
$W^{\sigma} = W$.  Thus $F^{\sigma} = \lambda F$ for some $0 \neq
\lambda \in K$ and we conclude that $(f_i/f_j)^{\sigma} = f_i/f_j$
for any $i,j$ such that $f_j \neq 0$.  Finally, we claim that if
$f_j \neq 0$, then there is some $i$ so that $b_i = f_i/f_j \not\in
k$. For if $b_i \in k$ for all $i$, then  we would have a
$k$-dependency $b_1 r_1 + b_2 r_2 + \dots + b_p r_p = 0$,
contradicting the initial choice of the $r_i$ as a $k$-basis of $R$.
So picking $j$ such that $f_j \neq 0$ and $i$ such that $h =
f_i/f_j$ is not in $k$, we obtain a non-constant $\sigma$-invariant
rational function $h$ on $X$.
\end{proof}

With the preceding result in hand, our alternative characterization
of the \ord\ property, which we call simply \emph{ordinary}, is very
easy to prove.  Here are the relevant definitions.
\begin{definition}
\label{gdo-def} Let $X$ be a scheme and let $\sigma: X \to X$ be an
automorphism. For any $x \in X$, let $\mc{O}_x = \{ \sigma^n(x) | n
\in \mb{Z} \}$ be the $\sigma$-orbit of $X$.  We say that $(X,
\sigma)$ has \emph{good dense orbits} if the set
\[ U = \{ x \in X |
\mc{O}_x\ \text{is Zariski dense in}\ X \}
\]
is a (Zariski) open subset of $X$.  We say that $(X, \sigma)$ is
\emph{ordinary} if for all $\sigma$-irreducible closed subsets $Z
\subseteq X$, $(Z, \sigma \vert_Z)$ has good dense orbits.
\end{definition}

\begin{corollary}
\label{gdo-ord-cor} Let $X$ be a scheme of finite type over $k$ with
$k$-automorphism $\sigma: X \to X$.  Then $(X, \sigma)$ is \ord\ if
and only if $(X, \sigma)$ is ordinary.
\end{corollary}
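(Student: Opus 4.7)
The plan is to reduce to Theorem~\ref{unc-irr-thm} by passing, for each $\sigma$-irreducible closed subset $Z \subseteq X$, to a fixed irreducible component. Write $Z = Z_1 \cup \cdots \cup Z_n$ with components cyclically permuted by $\sigma$, and set $\tau = \sigma^n$; then $\tau$ fixes each $Z_i$ and restricts to an automorphism of the integral $k$-scheme $Z_1$, so Theorem~\ref{unc-irr-thm} applies to $(Z_1, \tau)$.

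First I would set up a dictionary between $\sigma$-data on $Z$ and $\tau$-data on $Z_1$. The map $A \mapsto \bigcup_{i=0}^{n-1}\sigma^i(A)$, with inverse $W \mapsto W \cap Z_1$, is an inclusion-preserving bijection between proper $\tau$-invariant closed subsets of $Z_1$ and proper $\sigma$-invariant closed subsets of $Z$; a short check, tracking how $\sigma$ acts on the components $\sigma^i(A_j)$, shows this bijection takes $\tau$-irreducibility to $\sigma$-irreducibility in both directions, and hence restricts to a bijection between their maximal irreducibles. In parallel, a point $z \in Z_1$ has dense $\sigma$-orbit in $Z$ iff its $\tau$-orbit is dense in $Z_1$, while any point lying on two or more of the $Z_i$ has non-dense orbit; writing $U_Z$ and $U_{Z_1}$ for the corresponding dense-orbit loci, this yields $U_Z \cap Z_1 = U_{Z_1}$, and the decomposition $Z \setminus U_Z = \bigcup_i (Z_i \setminus U_{Z_i})$ shows $U_Z$ is open in $Z$ iff $U_{Z_1}$ is open in $Z_1$.

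It then suffices to show that, for $(Z_1, \tau)$ an integral $k$-scheme with automorphism, $U_{Z_1}$ is open iff $Z_1$ has finitely or uncountably many maximal $\tau$-irreducibles. If no dense $\tau$-orbit exists, Theorem~\ref{unc-irr-thm} gives uncountably many maximal $\tau$-irreducibles and $U_{Z_1} = \emptyset$, so both sides hold. Otherwise Theorem~\ref{unc-irr-thm} rules out the uncountable case, and the direction ``finitely many $\Rightarrow$ $U_{Z_1}$ open'' is routine: since every proper $\tau$-invariant closed subset is contained in some maximal $\tau$-irreducible, $Z_1 \setminus U_{Z_1}$ equals the union of the finitely many maximal $\tau$-irreducibles, a finite union of closed subsets.

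The main obstacle is the reverse implication in this dense-orbit case: showing $U_{Z_1}$ being open forces finitely, rather than merely countably, many maximal $\tau$-irreducibles. My intended argument uses that $V := Z_1 \setminus U_{Z_1}$ is then a proper closed $\tau$-invariant subset with finitely many irreducible components $W_1, \dots, W_r$ permuted by $\tau$, and that every maximal $\tau$-irreducible $M$ lies in $V$. A maximality argument then forces each irreducible component of $M$ to equal some $W_j$: if a component $C$ of $M$ were properly contained in some $W_j$, replacing each component of $M$ by the unique $W$ containing it would produce a strictly larger $\tau$-irreducible proper closed subset, contradicting the maximality of $M$. Since there are only finitely many $\tau$-orbits in $\{W_1, \dots, W_r\}$, there are only finitely many maximal $\tau$-irreducibles, completing the equivalence.
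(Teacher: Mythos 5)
Your proof is correct and follows essentially the same route as the paper's: reduce to the integral case by replacing $\sigma$ with a power fixing each irreducible component, invoke Theorem~\ref{unc-irr-thm} to equate "no dense orbit" with "uncountably many maximal irreducibles," and then check that in the remaining case openness of the dense-orbit locus is equivalent to finiteness of the set of maximal $\tau$-irreducibles. The paper dispatches the reduction and that last equivalence with "clearly" and "it is easy to see," and your write-up supplies exactly those details (one small imprecision: when the components $Z_i$ overlap, $W \mapsto W \cap Z_1$ need not literally invert $A \mapsto \bigcup_i \sigma^i(A)$ on all invariant closed sets, but only a finite-to-one correspondence of \emph{maximal} irreducibles is needed, and your maximality argument does deliver that).
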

\begin{proof}
Clearly it suffices to assume that $X$ is reduced and
$\sigma$-irreducible, and to prove that $X$ has good dense orbits if
and only if $X$ has either finitely many or uncountably many maximal
$\sigma$-irreducible subsets.  Replacing $\sigma$ by some power
which fixes each component of $X$, we see that we may also reduce to
the case that $X$ is integral.

Now
by Proposition~\ref{unc-irr-thm}, $X$ has no dense $\sigma$-orbits
 if and only if $X$ has uncountably many
maximal $\sigma$-irreducible subsets.  On the other hand, it is easy
to see that the points lying on a dense $\sigma$-orbit form a
nonempty open set $U$ if and only if $X$ has only finitely many
maximal $\sigma$-irreducible subsets, say $Y_1, \dots, Y_n$, whose
union is $Y = X \setminus U$.
\end{proof}

\section{Base extension}

\label{base-ext-sec}

In this section, we study how the maximal $\sigma$-irreducible
subsets of a pair $(X, \sigma)$ are related to those of $(X',
\sigma)$, where $X'$ is some base field extension of $X$.  We then
offer two important applications.  First, we will show that the
property of an automorphism having good dense orbits, as in
Definition~\ref{gdo-def}, is invariant under extension of the base
field. This will enable us in later sections, by the usual Lefschetz
principle, to change the base field from an uncountable
algebraically closed field $k$ of characteristic $0$ to the case $k
= \mb{C}$, so that results from complex algebraic geometry can be
applied.   The second application is more subtle.  Suppose that $(X,
\sigma)$ fails to have good dense orbits, for some integral scheme
$X$. We already know from Theorem~\ref{unc-irr-thm} that in this
case, $X$ must have a countably infinite number of maximal
$\sigma$-irreducible closed subsets.  In
Theorem~\ref{inv-divisor-thm} below, we show the surprising result
that only finitely many of those $\sigma$-irreducible closed subsets
can be of codimension-$1$ in $X$.  This will a key ingredient in our
later analysis of the case where $X$ is a surface. The idea of the
proof of Theorem~\ref{inv-divisor-thm} is to reduce to the case of a
base field $F$ which is finitely generated over its prime subfield,
and then use the fact that the divisor class group of a variety over
such a field $F$ is a finitely generated group.  Thus, we will need
to prove our general base field extension results for arbitrary
fields.

\begin{notation}
\label{extension-not}  In all of the results in this section, we
assume the following setup and notation.  Suppose that $Y$ is an
integral scheme of finite type over an arbitrary  field $F$.  Let $F \subseteq
E$ be a field extension and set $X = Y \times_{\spec F} \spec E$.
Let $\sigma: Y \to Y$ be an automorphism of $Y$ as an $F$-scheme,
and let $\sigma: X \to X$ also denote the induced $E$-automorphism
of $X$.  Let $\pi: X \to Y$ be the projection morphism.
\end{notation}

We first single out a few simple observations about this setup.
\begin{lemma}
\label{ac-ext-lem} Assume Notation~\ref{extension-not}.  If $F$ is
algebraically closed, then $X$ is also integral.
\end{lemma}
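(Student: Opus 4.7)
The plan is to reduce to the affine case and invoke the classical commutative algebra fact that the tensor product of two integral domains over an algebraically closed field is an integral domain. Specifically, I would cover $Y$ by finitely many affine open sets $U_i = \spec A_i$, where each $A_i$ is a finitely generated $F$-algebra that is a domain (possible since $Y$ is integral of finite type over $F$). Then $X$ is covered by the affine opens $V_i = \pi^{-1}(U_i) = \spec(A_i \otimes_F E)$.

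The key lemma to apply is the following: if $F$ is algebraically closed and $A$ is an integral $F$-algebra, then $A \otimes_F E$ is again a domain for any field extension $E/F$. This is standard; the proof embeds $A \hookrightarrow \operatorname{Frac}(A) = K$, reduces by flatness of $E/F$ to the claim that $K \otimes_F E$ is a domain, and then uses that $F$ being algebraically closed forces $K/F$ to be a regular extension, so that $K$ and $E$ are linearly disjoint over $F$ in any common overfield. Granting the lemma, each $V_i$ is integral. In particular $X$ is reduced (since reducedness is a local property) and each $V_i$ is irreducible.

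It remains only to show that $X$ is irreducible, which I would verify by exhibiting a unique generic point. Let $\eta \in Y$ denote the generic point of $Y$, with residue field $K = F(Y)$. Each generic point $\xi_i \in V_i$ corresponds to the zero ideal of $A_i \otimes_F E$ and projects under $\pi$ to $\eta$, so every $\xi_i$ lies in the scheme-theoretic fiber $\pi^{-1}(\eta) = \spec(K \otimes_F E)$. Applying the lemma to $K$, this fiber is itself integral, hence irreducible with a unique generic point. Since $\xi_i$ is in fact the generic point of the nonempty open subset $V_i \cap \pi^{-1}(\eta)$ of $\pi^{-1}(\eta)$, all the $\xi_i$ coincide with this unique generic point. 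Hence $X$ has a single generic point and is irreducible, and combined with reducedness this gives that $X$ is integral. The only nonformal step is the algebraic lemma on tensor products over an algebraically closed field, and that is classical.
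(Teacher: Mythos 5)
Your proof is correct and follows essentially the same route as the paper, which simply reduces the statement to the ring-theoretic fact that $R\otimes_F E$ is a domain whenever $R$ is a domain over an algebraically closed field $F$ (citing Bergman for that fact). You merely spell out the affine covering and the gluing via the unique generic point, details the paper leaves implicit.
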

\begin{proof}
This amounts to the ring theoretic statement that if $R$ is a
commutative $F$-algebra which is a domain, then $R \otimes_F E$ is
also a domain.  This fact is well-known; for instance see
\cite[Proposition 17.2]{Ber} for a more general result.
\end{proof}

\begin{lemma}
\label{pullback-dim-lem}  Assume Notation~\ref{extension-not}.  For
every irreducible closed subset $Z \subseteq Y$, every irreducible
component of $\pi^{-1}(Z)$ has the same dimension as $Z$.
\end{lemma}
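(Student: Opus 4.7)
The plan is to reduce the lemma to an affine statement and then exploit Noether normalization. First, endow the irreducible closed subset $Z \subseteq Y$ with its reduced induced subscheme structure, making $Z$ an integral $F$-scheme of finite type. Since $\pi^{-1}(Z) \cong Z \times_F \spec E$ as schemes, we may replace $Y$ by $Z$ and reduce to proving that every irreducible component of $X = Y \times_F \spec E$ has dimension $\dim Y$. Passing to an affine open of $Y$ meeting the image of a generic point of such a component changes neither its dimension nor its status as an irreducible component of the preimage, so we may further assume $Y = \spec A$, where $A$ is an integral domain finitely generated over $F$ of Krull dimension $d$.

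Apply Noether normalization to obtain a finite injective map $R := F[t_1, \ldots, t_d] \hookrightarrow A$. Since $F$ is a field, the functor $- \otimes_F E$ is exact and preserves finiteness, so $R' := E[t_1, \ldots, t_d] \hookrightarrow B := A \otimes_F E$ is again a finite integral extension. The irreducible components of $\spec B$ correspond bijectively to the minimal primes $\mathfrak{q}$ of $B$, and for each such $\mathfrak{q}$ the quotient $B/\mathfrak{q}$ is a domain integral over $R'/(\mathfrak{q} \cap R')$, so
\[
\dim B/\mathfrak{q} \;=\; \dim R'/(\mathfrak{q} \cap R') \;=\; d - \operatorname{height}(\mathfrak{q} \cap R').
\]
It therefore suffices to show that $\mathfrak{q} \cap R' = (0)$ for every minimal prime $\mathfrak{q}$ of $B$.

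The crux, and the main obstacle, will be to show that $B$ is torsion-free as an $R'$-module. This step is nontrivial precisely because $A$ need not be flat over $R$---a finite extension of Noetherian domains can fail to be flat---so the classical Cohen--Seidenberg going-down theorem does not apply directly to $R' \hookrightarrow B$. Instead, I will embed $A$ into a free $R$-module: since $A$ is a domain and finitely generated over the Noetherian domain $R$, the natural map $A \to A \otimes_R \operatorname{Frac}(R)$ is injective and realizes $A$ inside a finite-dimensional $\operatorname{Frac}(R)$-vector space, after which clearing a common denominator of a generating set yields an inclusion $A \hookrightarrow R^n$ of $R$-modules. Applying the exact functor $- \otimes_F E$ produces $B \hookrightarrow (R')^n$, and torsion-freeness of $B$ over $R'$ is immediate. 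Once this is established, the proof concludes quickly: every element of a minimal prime of a Noetherian ring is a zero-divisor, so $\mathfrak{q} \cap R'$ consists of elements of $R'$ acting as zero-divisors on $B$; torsion-freeness forces each such element to vanish, giving $\mathfrak{q} \cap R' = (0)$ and hence $\dim B/\mathfrak{q} = d = \dim Z$.
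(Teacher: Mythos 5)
Your proof is correct, and while it shares the paper's outer skeleton (reduce to the affine case, then Noether-normalize), the engine of the argument is genuinely different. The paper's proof, after normalizing, splits the field extension $E/F$ into a purely transcendental part (where $R\otimes_F E$ is a localization of a polynomial ring over a domain, hence itself a domain of dimension $d$) and an algebraic part (where $R\otimes_F E$ is free and integral over $R$, so the going-down theorem for flat extensions forces each minimal prime to contract to $(0)$ in $R$, and going-up gives the dimension). You instead work uniformly over the Noether normalization $R'=E[t_1,\dots,t_d]$ of the base-changed ring $B$, and replace going-down by the combination of two facts: $B$ is $R'$-torsion-free because the standard embedding $A\hookrightarrow R^n$ of a finitely generated torsion-free module over a Noetherian domain into a free module base-changes (by exactness of $-\otimes_F E$) to $B\hookrightarrow (R')^n$, and every element of a minimal prime of a Noetherian ring is a zero-divisor, so $\mathfrak{q}\cap R'=(0)$. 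Your observation that going-down cannot be applied directly to $R'\hookrightarrow B$ (which need not be flat) is apt; note, though, that the paper dodges this by applying going-down to $R\hookrightarrow R\otimes_F E$, which \emph{is} free since $E$ is free over $F$. What your route buys is the avoidance of the two-stage dévissage of the field extension and of any flatness or integrality hypothesis on $E/F$; what the paper's route buys is brevity in the purely transcendental case, where there is nothing to prove about minimal primes because the tensor product is already a domain. Both arguments are complete and correct.
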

\begin{proof}
This result is surely well-known, but for lack of an exact
reference, we sketch the proof. One may reduce immediately to the
affine case; thus one needs to prove the following fact about rings:
given a finitely generated commutative $F$-algebra $R$ which is a
domain with $\dim R = d$, then every minimal prime $P$ of $S = R
\otimes_F E$ has $\dim (S/P) = d$. By Noether normalization, we may
choose a polynomial ring $F[y_1, y_2, \dots y_d] \subseteq R$ over
which $R$ is finite; then $S$ is finite over $E[y_1, \dots y_d]$ and
so $\dim S = d$. Now if $E$ is purely transcendental over $F$, then
$S$ is a domain and the result follows.  Thus we can reduce to the
case that $E$ is algebraic over $F$.  In this case, $S$ is both flat
(in fact free) over $R$ and integral over $R$.  Given any minimal
prime $P$ of $S$, $P \cap R = 0$ by the going-down theorem
\cite[Theorem 9.5]{Ma}. Then $\dim S/P
> \dim R$ by the going-up theorem \cite[Theorem 9.4]{Ma}, so $\dim
S/P = \dim R = d$.
\end{proof}

Now we prove our main lemma about base extension.  The idea of the
proof of part (1) is similar to some other results in the
literature; for example, cf. \cite[Theorem 2.1]{Ir}.
\begin{lemma}
\label{base-ext-lem} Assume Notation~\ref{extension-not}, and in
addition assume that $X$ is integral.  Suppose that for every
$\sigma$-invariant rational function $f \in E(X)$, $f$ is algebraic
over $E$. Then the following hold:
\begin{enumerate}
\item  If $Z \subsetneq X$ is a proper $\sigma$-invariant closed subset of $X$, then
$\overline{\pi(Z)} \subsetneq Y$ is a proper $\sigma$-invariant
closed subset of $Y$.

\item The rule $Z \mapsto
\overline{\pi(Z)}$ defines a surjective function
\[
\phi: \{ \text{maximal}\ \sigma-\text{irreducible subsets of}\ X \}
\to \{ \text{maximal}\ \sigma-\text{irreducible subsets of}\ Y \}
\]
which is finite-to-$1$ and dimension-preserving.
\end{enumerate}
\end{lemma}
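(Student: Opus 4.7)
For (1), the plan is to argue by contradiction: assume $Z = V(I) \subsetneq X$ is a proper $\sigma$-invariant closed subset with $\overline{\pi(Z)} = Y$, so after reducing to $Y = \spec R$ affine and $S = R \otimes_F E$, the ideal $I \subseteq S$ is a nonzero $\sigma$-invariant proper ideal with $I \cap R = 0$. I will mimic the minimality argument from the proof of Theorem~\ref{unc-irr-thm}. Let $F' := \overline{F} \cap F(Y)$; since $F'/F$ is finite (any subfield of a finitely generated field extension is itself finitely generated) and $F'$ is set-wise $\sigma$-stable, I first restrict to a $\sigma$-stable affine open of $Y$ on which all of $F'$ is regular, so that WLOG $F' \subseteq R$. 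Fix an $F$-basis $\{f_\alpha\}$ of $E$, giving $S = \bigoplus_\alpha R f_\alpha$; since $\sigma|_E = \id$, the $\sigma$-action preserves supports in this decomposition. Pick $0 \neq g = s_1 f_1 + \cdots + s_m f_m \in I$ of minimum support (with $s_i \in R \setminus \{0\}$); then $m \geq 2$ because $I \cap R = 0$. The element $\sigma(s_1)g - s_1 \sigma(g) \in I$ has vanishing $f_1$-coefficient, hence support $< m$, so minimality forces it to be zero, giving $c_i := s_i/s_1 \in F(Y)^\sigma$ for every $i$. By hypothesis each $c_i$ is algebraic over $E$ in $L := E(X)$, and a standard linear-disjointness argument (using that $L$ is free as an $F(Y)$-module on any $F$-basis of $E$) upgrades this to algebraic over $F$, so $c_i \in F'$. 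Since $F' \subseteq R$, the element $h := \sum c_i f_i$ lies in the subring $F' \otimes_F E \subseteq S$. Integrality of $X$ forces the finite-dimensional $E$-subalgebra $F' \otimes_F E$ (a domain, being contained in $L$) to be a field; since $c_1 = 1$ and the $f_i$ are $F'$-linearly independent there, $h \neq 0$. Hence $h$ is a unit of $S$, and $s_1 = gh^{-1} \in I \cap R = 0$ contradicts $s_1 \neq 0$.

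For (2), I would first extract from Lemma~\ref{pullback-dim-lem} the dimension inequality $\dim V \leq \dim \overline{\pi(V)}$ for any irreducible closed $V \subseteq X$ (as $V$ sits in some component of $\pi^{-1}(\overline{\pi(V)})$), with equality — and in fact $\overline{\pi(V)} = W_1$ — whenever $V$ is a component of $\pi^{-1}(W_1)$ for some irreducible $W_1 \subseteq Y$. Now given a maximal $\sigma$-irreducible $W = W_1 \cup \cdots \cup W_n \subseteq Y$, pick any irreducible component $V$ of $\pi^{-1}(W_1)$ (whose image is automatically dense in $W_1$) and form $Z := V \cup \sigma V \cup \cdots$; this $Z$ is $\sigma$-irreducible with $\overline{\pi(Z)} = W$ and $\dim Z = \dim W$. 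Maximality of $Z$: any $\sigma$-irreducible $Z' \supsetneq Z$ with $Z' \subsetneq X$ has $\overline{\pi(Z')} = W$ (via (1) and maximality of $W$), so $Z' \subseteq \pi^{-1}(W)$; the dimension fact plus the $\sigma$-cycle structure then force $Z' = Z$. Conversely, if $Z$ is a maximal $\sigma$-irreducible of $X$ and $\overline{\pi(Z)} \subsetneq W' \subsetneq Y$ with $W'$ $\sigma$-irreducible, then $Z$ must equal the $\sigma$-irreducible component of $\pi^{-1}(W')$ containing it (by maximality), whence the dimension fact yields the contradiction $\overline{\pi(Z)} = W' \neq W$. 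This simultaneously establishes well-definedness and surjectivity of $\phi$, while dimension preservation is built into the construction; finite-to-oneness is immediate since $\phi^{-1}(W)$ is in bijection with the finitely many $\sigma$-orbits of irreducible components of $\pi^{-1}(W_1)$.

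The main obstacle is the portion of (1) that handles $F \neq F'$: one must carve out a $\sigma$-stable affine open on which $F'$ becomes regular, and then crucially exploit integrality of $X$ to promote the finite-dimensional domain $F' \otimes_F E$ to a field. Without both of these moves the element $h$ would live only in the larger ring $F(Y) \otimes_F E$ and need not be invertible in $S$, so the contradiction would fail to close.
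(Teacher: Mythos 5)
Your proof follows the paper's strategy closely. Part (2) is essentially the paper's argument (Lemma~\ref{pullback-dim-lem} for equidimensionality of the fibres of $\pi$, plus part (1) for properness, plus dimension counting), and the core of part (1) --- minimality of the support of $g$ over an $F$-basis of $E$, and the twisted commutator $\sigma(s_1)g - s_1\sigma(g)$ forcing $c_i = s_i/s_1$ to be $\sigma$-invariant --- is exactly the paper's. Where you diverge is the endgame of (1): the paper stops at ``$c_i$ is algebraic over $E$,'' forms the field $L = E[c_1,\dots] \subseteq E(X)$, and clears denominators by multiplying $g$ by $h^{-1}(s_1^N\otimes 1)$ with $N$ large enough that $s_1^N L$ lands back in $S$; this avoids any mention of $F' = \overline{F}\cap F(Y)$ and any shrinking of the open set. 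Your descent of the $c_i$ to $F'$ is valid (projecting the monic relation over $E$ onto the $F(Y)\otimes 1$ summand of the free $F(Y)$-module $F(Y)\otimes_F E$ does produce a monic relation over $F$), and inverting $h$ inside the field $F'\otimes_F E\subseteq S$ then closes the contradiction. Both endgames work; the paper's is slightly more economical because it never needs $F'$ to consist of regular functions.

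The one step that fails as written is the reduction to a \emph{$\sigma$-stable} affine open of $Y$. Such an open need not exist: for instance, if $\sigma$ is translation by a non-torsion point on an abelian variety (or acts that way on a factor), $Y$ may have no proper nonempty $\sigma$-stable closed subset at all, hence no $\sigma$-stable affine open, even though proper $\sigma$-invariant subsets of products with such a $Y$ certainly occur. The locus where all of $F'$ is regular \emph{is} a $\sigma$-stable dense open (since $\sigma$ restricts to a finite-order automorphism of the finite extension $F'/F$), but an affine open chosen inside it will generally not be $\sigma$-stable, so you cannot treat $I$ as a $\sigma$-invariant ideal of a single $S = R\otimes_F E$ and you cannot assert $\sigma(g)\in I$. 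The paper's workaround is to fix a (non-stable) affine $U'$, note that $\sigma(g)$ lies in $\mc{I}(\pi^{-1}(\sigma^{-1}U'))$, run the support comparison on $U'\cap\sigma^{-1}(U')$, and take the minimum of the support size over all affine opens. Your argument goes through verbatim once this adjustment is made: the conclusion $c_i = \sigma(c_i)$ is an identity in $F(Y)$ and hence global, your final step only needs $F'\subseteq\mc{O}(U')$ and $gh^{-1} = s_1\otimes 1\in\mc{I}(\pi^{-1}U')\cap\mc{O}(U')$, and a nonzero element of $\mc{I}(\pi^{-1}U')\cap\mc{O}(U')$ already forces $\overline{\pi(Z)}\subsetneq Y$.
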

\begin{proof}
(1) Choose any open affine set $U' \subseteq Y$ and let $U
=\pi^{-1}(U')$.  If $U' = \spec B$, then $U = \spec A$ where $A = B
\otimes_F E$. Let $\mc{I}$ be the ideal sheaf of the reduced
subscheme $Z$.  Let $I = \mc{I}(U)$, which is a nonzero ideal of
$A$. The only nontrivial thing we need to show is that
$\overline{\pi(Z)}$ is proper in $Y$, and for this it will suffice
to show that $I \cap B \neq 0$.

Pick a nonzero element
\[
x = \sum_{i=1}^d b_i \otimes \alpha_i\in I
\]
with $b_i\in B$, $\alpha_i\in E$.  We choose such an element with
the minimal $d$ for all possible choices of open affine subset $U'$.
If $d=1$, then $x(1\otimes \alpha_1^{-1})$ is a nonzero element of
$I \cap B$ and we are done, so we may assume that $d>1$.  We will
deduce a contradiction.   By minimality, the $\alpha_i$ are linearly
independent over $F$. Since $\mc{I}$ is $\sigma$-invariant, we have
\[
\sigma(x) = \sum_{i=1}^d \sigma(b_i)\otimes \alpha_i\in
\mc{I}(\sigma^{-1}(U))
\]
and thus working on the open affine set $U \cap \sigma^{-1}(U) =
\pi^{-1}(U' \cap \sigma^{-1}(U'))$ we see that
\[
b_d\sigma(x)-\sigma(b_d)x = \sum_{i=1}^{d-1}
(b_d\sigma(b_i)-\sigma(b_d)b_i)\otimes \alpha_i \in \mc{I}(U \cap
\sigma^{-1}(U)).
\]
By minimality of $d$, this element must be zero. Hence, by linear
independence of the $\alpha_i$, we have
\[
b_i/b_d = \sigma(b_i/b_d)
\]
as elements of $F(Y) = Q(B)$.  Thus $b_i/b_d \in E(X) = Q(A)$ is
algebraic over $E$ for $1 \leq i \leq d$ by hypothesis.

Since $A$ is a domain, the localization $Q(B) \otimes_F E$ is also a
domain, and it contains the elements $b_i/b_d$ for all $i$. Since
these elements are algebraic over $E$, $L = E[b_1/b_d, b_2/b_d,
\dots, b_{d-1}/b_d]$ is a finite-dimensional $E$-subspace of a
domain, and hence a field. Clearly there is $m >0 $ such that $b_d^m
\lambda \in B \otimes_F E = A$ for all $\lambda\in L$.  Write
 $y = x (b_d \otimes 1)^{-1} =  \sum_{i=1}^d b_i/b_d \otimes \alpha_i
\in L$.  Since $y \neq 0$ it has an inverse $y^{-1} = \sum_j
\gamma_j \otimes \beta_j \in L$. Then
\[
0 \neq x' = y^{-1} (b_d^m \otimes 1) =  \sum_j (b_d^m
\gamma_j)\otimes \beta_j \in A
\]
and
\[
0 \neq xx' = (b_d \otimes 1) y y^{-1} (b_d^m \otimes 1) =
b_d^{m+1}\otimes 1\in I,
\]
contradicting the assumption that $d$ is greater than $1$. Thus the case $d> 1$ cannot occur.

(2)
 Given a $\sigma$-irreducible closed subset $W \subseteq Y$,
$\pi^{-1}(W)$ is clearly $\sigma$-periodic, so is a finite union of
$\sigma$-irreducibles of $X$.  Moreover, all components of
$\pi^{-1}(W)$ have the same dimension as $W$, by
Lemma~\ref{pullback-dim-lem}.   Conversely, if $Z \subseteq X$ is a
$\sigma$-irreducible closed subset, it is clear that $Z' =
\overline{\pi(Z)}$ is $\sigma$-irreducible in $Y$.

Now if $Z$ as above is a \emph{maximal} $\sigma$-irreducible, then
$Z'$ is a proper subset of $Y$ by part (1), so $\pi^{-1}(Z')$ is a
proper subset of $X$ and by maximality $Z$ is a union of components
of $\pi^{-1}(Z')$.  In particular, in this case $\dim Z' = \dim Z$.
Also, $Z'$ is a maximal $\sigma$-irreducible of $Y$: otherwise $Z'
\subsetneq W \subsetneq Y$ for some $\sigma$-irreducible $W$, and
then $Z \subsetneq \pi^{-1}(W) \subsetneq X$ by dimension
considerations, a contradiction.  Thus there is a well-defined
function $\phi$ as in the statement of the lemma, and it preserves
dimensions. Conversely, if $W$ is a maximal $\sigma$-irreducible of
$Y$, then picking any maximal $\sigma$-irreducible $Z$ of $X$ such
that $Z$ contains a component of $\pi^{-1}(W)$, we have $W \subseteq
Z' = \overline{\pi(Z)}$ and thus $Z' = W$, so $\phi$ is surjective.
Moreover, we must have $\dim Z = \dim Z' = \dim W = \dim
\pi^{-1}(W)$, so $Z$ is a union of components of $\pi^{-1}(W)$.  It
follows that $\phi$ is finite-to-one.
\end{proof}

Next, we immediately apply the preceding lemma, to show that the
property of having good dense orbits is invariant under extension of
the base field.
\begin{lemma}
\label{gdo-basechange-lem} Assume Notation~\ref{extension-not}, and
suppose in addition that $F = k$ and $E = \ell$ are algebraically
closed uncountable fields.   Then $(X, \sigma)$ has good dense
orbits if and only if $(Y, \sigma)$ has good dense orbits.
\end{lemma}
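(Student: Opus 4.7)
First, since $k$ is algebraically closed, $X$ is integral by Lemma~\ref{ac-ext-lem}, so Theorem~\ref{unc-irr-thm}, Corollary~\ref{gdo-ord-cor}, and Lemma~\ref{base-ext-lem} all apply to $(X,\sigma)$ viewed as a pair over $\ell$. By Corollary~\ref{gdo-ord-cor}, the lemma reduces to showing that $X$ has finitely many or uncountably many maximal $\sigma$-irreducible closed subsets if and only if $Y$ does, and Theorem~\ref{unc-irr-thm} identifies the ``uncountably many'' case with the existence of a nonconstant $\sigma$-invariant rational function in the relevant function field. My plan is thus to establish separately: (a) $\ell(X)^\sigma \supsetneq \ell$ iff $k(Y)^\sigma \supsetneq k$; and (b) in the opposite case, $X$ has finitely many maximal $\sigma$-irreducibles iff $Y$ does.

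For (a), one direction is a straightforward pullback: a nonconstant $\sigma$-invariant $g \in k(Y)$ gives $\pi^*g \in \ell(X)^\sigma$, and $\pi^*g \notin \ell$ since $k(Y)\cap \ell = k$ inside $\ell(X)$ (a rational function on $Y$ constant on a dense open must be a $k$-valued constant). For the converse I argue by contraposition. Assume $k(Y)^\sigma = k$; Theorem~\ref{unc-irr-thm} then yields a closed point $y\in Y$ with $\overline{\mc{O}_y}=Y$. Because $k(y)=k$, the fiber $\pi^{-1}(y) = \spec(k\otimes_k \ell) = \spec\ell$ consists of a single closed point $x\in X$, and $\pi(\mc{O}_x) = \mc{O}_y$. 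Set $Z := \overline{\mc{O}_x}$, a closed $\sigma$-invariant subset of $X$ with $\overline{\pi(Z)} \supseteq \overline{\mc{O}_y} = Y$, hence $\overline{\pi(Z)}=Y$. The key observation is that the proof of Lemma~\ref{base-ext-lem}(1) carries over under the apparently different hypothesis $k(Y)^\sigma = k$: the minimality argument there ends with ratios $b_i/b_d$ that are $\sigma$-invariant in $Q(B)=k(Y)$, and the stated hypothesis is used only to conclude $b_i/b_d\in k$ (via the fact that $\ell$ is algebraically closed in $\ell(X)$); our hypothesis $k(Y)^\sigma = k$ gives this conclusion directly. Hence, by the contrapositive of Lemma~\ref{base-ext-lem}(1), $\overline{\pi(Z)} = Y$ forces $Z = X$, so $x$ has dense $\sigma$-orbit in $X$, and Theorem~\ref{unc-irr-thm} yields $\ell(X)^\sigma = \ell$.

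For (b), assume $\ell(X)^\sigma = \ell$; this is exactly the hypothesis of Lemma~\ref{base-ext-lem}, which supplies a surjective, finite-to-one, dimension-preserving map $\phi$ from the maximal $\sigma$-irreducible subsets of $X$ to those of $Y$. A surjective map with finite fibers has a finite domain iff it has a finite codomain, so $X$ has finitely many maximal $\sigma$-irreducibles iff $Y$ does. Combining (a) and (b) transfers the ``finite or uncountable'' dichotomy between $X$ and $Y$, proving the lemma. The main obstacle is the ``if'' direction of (a); it requires looking inside the proof of Lemma~\ref{base-ext-lem}(1) to locate the single step where the hypothesis is used and to recognize that this step can equally well be supplied by $k(Y)^\sigma = k$ in place of the stated hypothesis on $\ell(X)^\sigma$.
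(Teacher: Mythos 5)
Your proof is correct, and its overall architecture matches the paper's: integrality of $X$ via Lemma~\ref{ac-ext-lem}, reduction (through the proof of Corollary~\ref{gdo-ord-cor} and Theorem~\ref{unc-irr-thm}) to transferring the finite-or-uncountable count of maximal $\sigma$-irreducibles, and Lemma~\ref{base-ext-lem}(2) for the finite case. The one step you handle genuinely differently is the hard direction of your (a): that a dense orbit $\mc{O}_y$ on $Y$ lifts to a dense orbit of $x = \pi^{-1}(y)$ on $X$ (equivalently, $k(Y)^\sigma = k$ forces $\ell(X)^\sigma = \ell$). The paper does this with an elementary observation: for any dense set $S$ of closed points of $Y$, $\pi^{-1}(S)$ is dense in $X$, because $\bigcap_y I_y = 0$ in $B$ implies $\bigcap_y (I_y \otimes \ell) = 0$ in $B \otimes_k \ell$; applied to $S = \mc{O}_y$ this gives density of $\mc{O}_x$ directly. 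You instead re-open the proof of Lemma~\ref{base-ext-lem}(1) and note that its single use of the hypothesis --- concluding that the $\sigma$-invariant ratios $b_i/b_d \in k(Y)$ are algebraic over $\ell$ --- is equally well supplied by $k(Y)^\sigma = k$ (indeed then $b_i/b_d \in k$, so $x = b_d \otimes \sum_i (b_i/b_d)\alpha_i$ already contradicts the minimality of $d>1$), and you then apply the contrapositive to $Z = \overline{\mc{O}_x}$. That observation is correct and somewhat elegant, but it asks the reader to audit the proof of Lemma~\ref{base-ext-lem}(1) under a substitute hypothesis, whereas the paper's pullback-of-dense-sets argument is self-contained and shorter. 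Both routes are sound; yours has the side benefit of recording that Lemma~\ref{base-ext-lem}(1) is available whenever $Y$ itself has no nonconstant $\sigma$-invariant rational functions.
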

\begin{proof}
Since $k$ is algebraically closed, and $Y$ is integral, the extended
scheme $X = Y \times_{\spec k} \spec \ell$ is also integral by
Lemma~\ref{ac-ext-lem}.  Suppose first that $(X, \sigma)$ has a
dense orbit. Then by Theorem~\ref{unc-irr-thm}, we see that $X$ has
no non-constant $\sigma$-invariant rational functions, and so the
hypotheses of Lemma~\ref{base-ext-lem} hold. Thus by
Lemma~\ref{base-ext-lem}(2), the number of maximal
$\sigma$-irreducible closed subsets of $Y$ is either finite or
uncountable if and only if the same holds for $X$.  Then by the
proof of Corollary~\ref{gdo-ord-cor}, $(X, \sigma)$ has good dense
orbits if and only if $(Y, \sigma)$ does.

If, on the other hand, $(X, \sigma)$ has no dense orbits, then it
will suffice to prove that $(Y, \sigma)$ also has no dense orbits.
Note that given a closed point $y \in Y$, then $x = \pi^{-1}(y)$ is
also a closed point of $X$, since $k$ is algebraically closed.  We
claim that if $y$ lies on a dense $\sigma$-orbit, then so does $x$.
In fact, a more general statement holds: given a set of closed
points $S  \subseteq Y$, if $S$ is dense in $Y$ then $\pi^{-1}(S)$
is dense in $X$.  To see this, one may reduce to the affine case,
say $Y = \spec B$ and $X = \spec A$ with $A = B \otimes_k \ell$.  If
for $y \in S$ we let $I_y$ be the corresponding maximal ideal of
$B$, then  by assumption $\bigcap I_y = 0$; thus $\bigcap (I_y
\otimes \ell) = 0$ in $A$ and the claim follows.
\end{proof}

We now move toward our second application of the base field
extension lemma.  We first recall in the next lemma some needed
background results, which are presumably well-known.  We work in the
rest of this section with Weil divisors on a normal variety $X$; see
\cite[Section II.6]{Ha} for more background. Recall that the divisor
class group of $X$, $\cl X$, is the group of Weil divisors modulo
linear equivalence (which might be different from the Picard group
$\pic X$ if $X$ is not locally factorial.)

\begin{lemma}
\label{pic-fg-lem} Let $X$ be a normal quasi-projective variety over
a field $F$.
\begin{enumerate}
\item If $F$ is a finitely generated extension of its prime
subfield, then $\cl X$ is a finitely generated abelian group.
\item If $G = \Gamma(X, \mc{O}_X)^*$ is the group of units of the
global regular functions on $X$, then $G/(\overline{F}^* \cap G)$ is
a finitely generated abelian group.
\end{enumerate}
\end{lemma}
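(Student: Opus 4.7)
The plan is to handle (2) first since it is simpler.  Choose a projective normal completion $\overline{X} \supseteq X$ by taking a projective closure and normalizing, and let $D_1, \dots, D_n$ be the prime divisors of $\overline{X}$ lying in $\overline{X} \setminus X$.  For any $f \in G$, both $f$ and $f^{-1}$ are regular on $X$; since $\overline{X}$ is normal and the complement of $X \cup D_1 \cup \cdots \cup D_n$ in $\overline{X}$ has codimension at least two, the divisor of $f$ on $\overline{X}$ is supported on $D_1 \cup \cdots \cup D_n$.  This gives a homomorphism $v \colon G \to \mathbb{Z}^n$, $f \mapsto (\operatorname{ord}_{D_i}(f))_i$, whose kernel is $\Gamma(\overline{X}, \mathcal{O}_{\overline{X}})^*$.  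Because $\overline{X}$ is proper over $F$, $\Gamma(\overline{X}, \mathcal{O}_{\overline{X}})$ is a finite-dimensional $F$-algebra, so its units are algebraic over $F$ and therefore lie in $\overline{F}^* \cap G$.  Hence $v$ descends to an injection $G/(\overline{F}^* \cap G) \hookrightarrow \mathbb{Z}^n$, giving the conclusion.

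For (1), I would reduce in stages to the smooth projective case and then invoke classical arithmetic theorems.  With $\overline{X}$ and the $D_i$ as above, the basic exact sequence of Weil divisor classes,
\[
\bigoplus_{i=1}^n \mathbb{Z} \cdot [D_i] \longrightarrow \cl \overline{X} \longrightarrow \cl X \longrightarrow 0,
\]
reduces the problem to showing that $\cl \overline{X}$ is finitely generated.  Next, in the characteristic-zero setting of the paper, choose a resolution of singularities $\pi \colon \widetilde{X} \to \overline{X}$ by Hironaka.  The proper pushforward of Weil cycles $\pi_* \colon \cl \widetilde{X} \to \cl \overline{X}$ is surjective (every prime divisor on $\overline{X}$ is the pushforward of its strict transform) with kernel generated by the finitely many $\pi$-exceptional prime divisors, so it suffices to treat the case $\widetilde{X}$ smooth and projective.

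In this case $\cl \widetilde{X} = \Pic \widetilde{X}$, and the N\'eron--Severi exact sequence
\[
0 \longrightarrow \Pic^0 \widetilde{X} \longrightarrow \Pic \widetilde{X} \longrightarrow \NS \widetilde{X} \longrightarrow 0
\]
exhibits $\Pic \widetilde{X}$ as an extension of two finitely generated groups: the N\'eron--Severi group $\NS \widetilde{X}$ is finitely generated by the classical theorem of the base (as a subgroup of its geometric counterpart, which is finitely generated), and $\Pic^0 \widetilde{X}$ is the group of $F$-rational points of the Picard variety of $\widetilde{X}$, which is finitely generated by the Lang--N\'eron theorem since $F$ is finitely generated over its prime subfield.

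The main conceptual content is the use of Lang--N\'eron (and of finite generation of N\'eron--Severi) in the final step; the other steps are routine divisor-theoretic manipulations with projective completions, resolutions, and proper pushforward.  In positive characteristic (which the statement of the lemma formally permits) the appeal to Hironaka would be replaced by de Jong's theorem on alterations together with a degree/trace argument, but the overall strategy is unchanged.
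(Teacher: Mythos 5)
Your part (2) is exactly the paper's argument: embed $X$ as an open subset of a normal projective completion, map $G$ to $\mathbb{Z}^n$ via the valuations at the codimension-one boundary components, and note that the kernel consists of global units of the completion, which lie in $\overline{F}^*$ by properness; there is nothing to add. For part (1) you start with the same reduction (the excision sequence $\bigoplus_i \mathbb{Z}[D_i] \to \cl \overline{X} \to \cl X \to 0$ reduces everything to the normal projective case), but then you diverge: the paper simply quotes the Mordell--Weil--N\'eron--Severi theorem in the form of \cite[Corollary 6.6.2]{La2}, which gives finite generation of the divisor class group of a \emph{normal} projective variety over a field finitely generated over its prime field, with no smoothness required; you instead unpack that black box via resolution of singularities, proper pushforward, and the extension $0 \to \Pic^0 \to \Pic \to \NS \to 0$, invoking the theorem of the base and Lang--N\'eron separately. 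Your decomposition is correct and complete in characteristic zero, and it makes the arithmetic input more transparent. The one caveat is that the lemma is stated, and used in Theorem~\ref{inv-divisor-thm}, without any characteristic hypothesis, and your proposed positive-characteristic substitute is not as routine as you suggest: for an alteration $\pi$ of degree $d$, the pushforward $\pi_*$ only produces the multiple $[k(D'):k(D)]\,D$ of each prime divisor $D$ of $\overline{X}$, so the cokernel of $\pi_*$ on class groups is torsion of bounded exponent but not obviously finite, and finite generation does not descend formally from $\cl \widetilde{X}$ to $\cl \overline{X}$ along an alteration. The paper's citation of Lang's result for normal (not necessarily smooth) projective varieties sidesteps resolution entirely, which is the main advantage of its route.
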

\begin{proof}
(1) By definition, $X$ can be embedded as an open subset of a
projective variety $Y$; replacing $Y$ by its normalization if
necessary (which does not affect the open set $X$), we may assume
that $Y$ is normal and projective.  Now $\cl X$ is a surjective
image of $\cl Y$ by \cite[Proposition II.6.5]{Ha}, so we need only
show that $\cl Y$ is finitely generated.  This is immediate from the
version of the Mordell-Weil-N\'eron-Severi theorem in
\cite[Corollary 6.6.2]{La2}.

(2) As in part (1), we may embed $X$ as an open subset of a normal
projective $Y$. Then $Y \setminus X$ has finitely many irreducible
components, some of which are of codimension 1, say $Z_1, \dots,
Z_n$.  We thus have a homomorphism from G to the additive group
$\mathbb{Z}^n$, given by $f \mapsto (v_{Z_1}(f), \dots,
v_{Z_n}(f))$, where $v_{Z_i}$ is the discrete valuation associated
to the divisor $Z_i$. The kernel of this homomorphism is $\Gamma(Y,
\mc{O}_Y)^*$, which, since $Y$ is projective, is contained in
$\overline{F}^*$.  The result follows.
\end{proof}

We now give the result promised at the beginning of this section,
which shows in particular that if $(X, \sigma)$  fails to have good
dense orbits, all but a finite number of the countably many maximal
$\sigma$-irreducible subsets must be of codimension-2 or smaller.
This completes the proof of Theorem~\ref{main-geom-thm} from the
introduction.

\begin{theorem}
\label{inv-divisor-thm} Let $k$ be uncountable and algebraically
closed, and let $X$ be a quasi-projective integral scheme over $k$
with $k$-automorphism $\sigma: X \to X$. If $X$ has infinitely many
$\sigma$-irreducible codimension-$1$ closed subsets, then there
exists a non-constant $\sigma$-invariant rational function $f \in
k(X)$. Consequently, $\sigma$ does not have a dense orbit, and there
are uncountably many $\sigma$-irreducible codimension-1 closed
subsets.
\end{theorem}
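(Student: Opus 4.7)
The ``consequently'' statements follow immediately from Theorem~\ref{unc-irr-thm}, so the core task is to produce a nonconstant $\sigma$-invariant rational function $f \in k(X)$ under the hypothesis that $X$ has infinitely many $\sigma$-irreducible codimension-$1$ closed subsets. The overall plan is to descend to a model over a finitely generated base field, where Lemma~\ref{pic-fg-lem} provides two key finiteness constraints (the divisor class group and the units modulo constants are both finitely generated), and then use these constraints to extract $\mathbb{Z}$-linear relations among the invariant divisors that produce the desired invariant rational function.

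First I would normalize $X$ (preserving $k(X)$ and all $\sigma$-irreducible codimension-$1$ subsets) and spread out: pick a finitely generated subfield $F \subseteq k$ and a normal integral quasi-projective $F$-scheme $Y$ with $X = Y \times_{\spec F} \spec k$ and $\sigma$ defined over $F$, enlarging $F$ if necessary so that $X$ remains integral (cf.\ Lemma~\ref{ac-ext-lem}). Arguing by contradiction, assume $k(X)$ has no nonconstant $\sigma$-invariant rational function. Then Lemma~\ref{base-ext-lem}(1) combined with Lemma~\ref{pullback-dim-lem} guarantees that the infinitely many $\sigma$-irreducible codimension-$1$ closed subsets of $X$ descend finite-to-one to infinitely many $\sigma$-irreducible codimension-$1$ closed subsets $E_1, E_2, \dots$ of $Y$.

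Next, since $\cl Y$ is finitely generated by Lemma~\ref{pic-fg-lem}(1), the classes $[E_i]$ must satisfy $\mathbb{Z}$-linear relations $\sum n_j [E_{i_j}] = 0$, each yielding $g \in F(Y)^*$ with $\operatorname{div}(g) = \sum n_j E_{i_j}$ a nonzero $\sigma$-invariant divisor, so $u := \sigma(g)/g$ lies in $G := \Gamma(Y, \mc{O}_Y)^*$. After replacing $\sigma$ by a power (permitted by Theorem~\ref{unc-irr-thm}), I may assume $\sigma$ acts trivially on $G_0 := \overline{F}^* \cap G$, which makes the $\sigma$-cohomology of $G_0$ tractable. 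Since $G/G_0$ is finitely generated by Lemma~\ref{pic-fg-lem}(2), given sufficiently many of these $g_i$'s --- chosen so that the corresponding divisor vectors are $\mathbb{Q}$-linearly independent in the $\mathbb{Q}$-span of the $E_j$'s --- one can find an integer combination $\prod u_i^{a_i} \in G_0$ with $\sum a_i \operatorname{div}(g_i) \neq 0$, producing a $\sigma$-eigenvector $g = \prod g_i^{a_i}$ with $\sigma(g)/g = c \in G_0$ and $\operatorname{div}(g) \neq 0$.

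The main obstacle is the final step: upgrading such an eigenvector to a genuinely $\sigma$-invariant nonconstant rational function. A further pigeonhole argument on the eigenvalues $c$, which lie in $G_0 = (F')^*$ for $F' := \overline{F} \cap F(Y)$ a finite extension of $F$, ought to yield two eigenvectors with equal eigenvalue whose ratio is $\sigma$-invariant with nontrivial divisor; the delicate point is showing that the arising eigenvalues actually inhabit a suitably controlled finitely generated subgroup of $(F')^*$ so that this pigeonhole argument terminates. I expect this is where the hypothesis that $F$ is finitely generated over its prime subfield is exploited most sharply, presumably through a Dirichlet $S$-unit style finiteness applied to the subgroup of $(F')^*$ generated by the eigenvalues coming from divisors supported in $\{E_j\}$.
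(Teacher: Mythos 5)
Your setup and the first two-thirds of your argument track the paper's proof closely: normalize, descend to a model $Y$ over a finitely generated field $F$ via Lemma~\ref{base-ext-lem}, use the finite generation of $\cl Y$ to produce rational functions $f_m$ whose divisors are supported on the $\sigma$-periodic divisors $Y_i$, and use the finite generation (and torsion-freeness) of $H = G/(\overline{F}^*\cap G)$ to combine the units $\sigma^p(f_m)/f_m$ into a nonconstant $\sigma^p$-eigenvector $g$ with $\sigma^p(g) = \alpha g$, $\alpha \in \overline{F}^*$. Up to that point you have essentially reconstructed the paper's argument.

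The final step, however, is a genuine gap, and the pigeonhole you propose will not close it. The eigenvalues $\alpha$ live in $(F')^*$, where $F' = \overline{F}\cap F(Y)$ is a finite extension of $F$, and $(F')^*$ is not finitely generated (already $\mathbb{Q}^*$ is not); moreover there is no uniform finitely generated subgroup containing all eigenvalues that can arise, since each new eigenvector is built from units $\sigma^p(f_m)/f_m$ attached to ever more divisors $Y_m$. Producing $N$ eigenvectors only guarantees their eigenvalues lie in a group of rank at most $N$, so one can never force a multiplicative dependence (or a coincidence of eigenvalues) by producing more of them; a Dirichlet $S$-unit argument would require a fixed finite $S$ controlling all eigenvalues, which is not available. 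The paper's resolution is different and avoids any control of the eigenvalues: iterate the construction to obtain $s+1 = \dim Y + 1$ eigenvectors $h_0,\dots,h_s$ with $\sigma^p(h_i)=\alpha_i h_i$, chosen so that their (nonzero) divisors involve pairwise disjoint sets of codimension-one subsets. Since $\operatorname{tr.deg}_F F(Y) = s$, these are algebraically dependent; take a polynomial relation $P(h_0,\dots,h_s)=0$ with the minimal number of monomials. Applying $\sigma^p$ gives $P(\alpha_0 h_0,\dots,\alpha_s h_s)=0$, and minimality forces any two monomials appearing in $P$ to be scaled by the same product of the $\alpha_i$'s; hence the ratio of two such monomials is $\sigma^p$-invariant, and it is nonconstant because the disjoint supports of the $(h_i)$ prevent cancellation in its divisor. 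This algebraic-dependence trick is the missing idea; without it (or a genuinely new substitute) your argument does not terminate.
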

\begin{proof}
Let $X$ have infinitely many $\sigma$-irreducible codimension-$1$
closed subsets.  Since the hypotheses pass to the normalization
$\wt{X}$ of $X$, we may assume that $X$ is normal. It is standard
that we can pick a finitely generated extension $F$ of the prime
subfield of $k$ so that $X$ and $\sigma$ are defined over $F$. In
other words, we can find a quasi-projective $F$-variety $Y$ such
that $X = Y \times_{\spec F} \spec k$ and an automorphism $\sigma: Y
\to Y$ which induces the given $\sigma: X \to X$.  Thus we are in
the situation of Notation~\ref{extension-not}, with $E = k$.  As
there, let $\pi: X \to Y$ be the projection.  Note that $Y$ is also
normal.

We are certainly done if there exists a nonconstant $f \in k(X)$
with $\sigma(f) = f$, so we may assume that this is not the case,
and then the hypotheses of Lemma~\ref{base-ext-lem} hold. Note that
as a special case of the correspondence in part (2) of that lemma,
the rule $Z \to \overline{\pi(Z)}$ defines a finite-to-one mapping
from the set of $\sigma$-irreducible codimension-$1$ closed subsets
of $X$ to the set of $\sigma$-irreducible codimension-$1$ closed
subsets of $Y$.  In particular, we conclude that $Y$ also has
infinitely many $\sigma$-irreducible codimension-$1$ closed subsets.

Now by Lemma~\ref{pic-fg-lem}, $\cl Y$ and  $H = \Gamma(Y,
\mc{O}_Y)^*/(\Gamma(X, \mc{O}_Y)^* \cap \overline{F}^*)$ are
finitely generated abelian groups.  Moreover, it easy to see that
$H$ is torsionfree; let $d$ be its torsionfree rank. We may find a
sequence of distinct irreducible codimension-one closed subsets
$Y_1, Y_2, \ldots $ which are $\sigma$-periodic. Let $[Y_i]$ be the
divisor class in $\cl Y$ determined by $Y_i$. Then there is some $n$
such that in $\cl Y$,
$$
[Y_m]\in \sum_{i=1}^n \mathbb{Z}[Y_i]
$$
for all $m$.
  For each $m>n$, there is some rational
function $f_m$ on $Y$ with corresponding principal divisor $(f_m) =
[Y_m]-a_{1,m}[Y_1]-\cdots -a_{n,m} [Y_n]$ for some integers
$a_{i,j}$.  By assumption, there is some $q$ (depending on $m$) such
that $\sigma^{qa}(f_m)$ has the same divisor as $f_m$ for all
integers $a$.  Thus we may pick some number $p$ such that for $m\in
[n+1, n+d+1]$ we have
$$
g_m :=\sigma^p(f_m)/f_m
$$
is in $\Gamma(Y, \mc{O}_Y)^*$.  It follows that the images of
$g_{n+1},g_{n+2},\ldots, g_{n+d+1}$ in $H$ have some nontrivial
relation.  That is, for some integers $b_j$ (not all zero), we have
$$
\alpha = \prod_{j=1}^{d+1} g_{n+j}^{b_j} \in \overline{F}^*.
$$
 Then
$$
f=\prod_{j=1}^{d+1} f_{n+j}^{b_j}
$$ is a rational function such that $\sigma^p(f) = \alpha f$.  Moreover, if $j$ is the largest index such
that $b_j\not =0$, then $f$ has a zero or pole in the dvr
corresponding to $Y_{n+j}$, so it is not in $\overline{F}$.

Notice that we used only finitely many of the $Y_i$ above.  Throwing
those away, we can repeat the process above and construct another
non-constant rational function $f'$ with $\sigma^q(f') = \alpha'
f'$, and such that the codimension-$1$ subsets appearing as zeroes
or poles in $(f')$ are completely different from those appearing in
$(f)$. Repeating this process, we can construct rational functions
$h_0, h_1, \dots, h_s$, where $s = \dim Y$ is the transcendence
degree of $F(Y)$ over $F$, satisfying $\sigma^p(h_i) = \alpha_i h_i$
for some fixed $p$, such that $(h_i) \neq 0$ and no codimension-$1$
irreducible subset of $Y$ appears in more than one divisor $(h_i)$.
The elements $h_0,\ldots, h_s$ are algebraically dependent over $F$,
so we may pick a nonzero polynomial $P(x_0,\ldots ,x_s) \in F[x_0,
\ldots, x_s]$ such that $P(h_0,\ldots ,h_s)=0$.  Choose such a $P$
with a minimal number of monomial terms with nonzero coefficient.
By construction, $\sigma^p(P(h_0,\ldots ,h_s))= P(\alpha_0
h_0,\ldots ,\alpha_s h_s)=0$.  If $x_0^{i_0}\cdots x_s^{i_s}$ and
$x_0^{j_0}\cdots x_s^{j_s}$ are distinct monomials with nonzero
coefficient in $P$, then we must have
$$
\prod_{\ell} \alpha_{\ell}^{i_{\ell}}=\prod_{\ell}
\alpha_{\ell}^{j_{\ell}},
$$
since otherwise we would contradict the minimality of $P$.  Thus
$$
h\ = \ \prod_{\ell} h_{\ell}^{i_{\ell}-j_{\ell}}
$$ is $\sigma^p$-invariant, and since the
$(h_{\ell})$ all involve different codimension-1 subsets, there is
no cancelation and so $(h) \neq 0$.   Thus $h \not \in
\overline{F}$, so the element $(h \otimes 1)$ of $F(Y) \otimes_F k
\subseteq k(X)$ is a rational function on $X$ which is
$\sigma^p$-invariant and not in $k$.

All statements in the theorem now follow immediately from
Theorem~\ref{unc-irr-thm}.
\end{proof}

\section{Growth type}
\label{growth-type-sec}

For the results of this section only, the cardinality of $k$ is not
relevant, and so we assume the following
\begin{standing-hypothesis}
\label{k-hyp3} Throughout this section, $k$ is an algebraically
closed field.
\end{standing-hypothesis}
\noindent Suppose that $A$ is one of the examples introduced in
Section~\ref{examples-sec}, with corresponding geometric data $(X,
\sigma)$ where $X$ is a quasi-projective scheme.
Theorem~\ref{main-ord-thm} suggests that there is some connection
between the growth of $A$ and the geometry of $(X, \sigma)$, and in
this section we begin to explain this connection.

In fact, we will work with birationally commutative algebras more
generally. Suppose that $A$ is a finitely generated connected
$\mb{N}$-graded $k$-algebra, which is a noetherian domain with
$Q_{\rm gr}(A) = K[t, t^{-1}; \sigma]$ where $K$ is a field. In
\cite[Theorem 1.4]{RZ}, it was shown that whether $A$ has finite
GK-dimension or not depends only on $(K, \sigma)$.   This idea was
explored further in \cite{Ro}, in the special case where $K/k$ is a
finitely generated extension with $\trdeg K/k = 2$; specifically, it
was shown how to exactly characterize which $(K, \sigma)$ correspond
to algebras $A$ of finite GK-dimension.  In this section, we review
the results of \cite{Ro} and prove some generalizations which are
useful to know in themselves, as well as being applicable to our
study of ordinary automorphisms in the next section.  In particular,
we show that the case where $\GK A < \infty$ above can in fact be
characterized by the condition $\GK K[t, t^{-1}; \sigma] < \infty$;
this extends \cite[Theorem 1.6]{RZ}. This will also allow us to
extend the theory of \cite{Ro} to $\mb{Z}$-graded algebras such as
skew and skew-Laurent extensions.

We now recall some needed definitions from \cite{Ro}.
\begin{definition}
\label{bir-data-def} Let $\sigma: K \to K$ be an automorphism of a
finitely generated field extension of $k$ with $\trdeg K/k = 2$. We
define associated \emph{growth data} $(\rho, j)$ with real number
$\rho \geq 1$ and integer $j \geq 0$, as follows. Pick any
nonsingular projective surface $X$ with $k(X) = K$, and consider the
birational map $\sigma: X \dra X$ induced by $\sigma: K \to K$. Let
$N^1(X)$ be the group of Cartier divisors on $X$ modulo numerical
equivalence.  Each power $\sigma^n$ induces a homomorphism
$(\sigma^n)^*: N^1(X) \to N^1(X)$, defined essentially by pullback
of divisors (see \cite[Definition 2.1]{Ro} and the discussion
following.)  The group $N^1(X)$ is free abelian of finite rank and
so each $(\sigma^n)^*$ corresponds to an integer matrix. Finally,
choosing any matrix norm, the sequence $f(n) = || (\sigma^n)^* ||$
is equivalent to the sequence $g(n) = n^j \rho^n$ for a unique
$(\rho, j)$ as above (independent of $X$), in the sense that
$\lim_{n \to \infty} f(n)/g(n) < \infty$ \cite[Lemma 2.12]{Ro}.
 Finally, we say that $(K, \sigma)$ (or $(X, \sigma)$)
has \emph{finite growth type} if the associated growth data has
$\rho =1$, and \emph{infinite growth type} if $\rho
> 1$.
\end{definition}

The main results of \cite{Ro} depend on a beautiful classification
of the possible values of $(\rho, j)$ that can occur in
Definition~\ref{bir-data-def}, which is due (in case $k = \mb{C}$)
to Diller and Favre \cite{DF}.  We recall in the next theorem the
part of the classification corresponding to finite growth type, for
which we need a few more definitions. Given a birational self-map of
a projective variety $\sigma: X \dra X$, we say that it is
\emph{conjugate} to a birational map $\tau: Y \dra Y$ if there is a
birational map $\theta: X \dra Y$ such that $\tau \theta = \theta
\sigma$ (as birational maps).  A surjective morphism $f: X \to C$
from a integral nonsingular surface $X$ to a nonsingular curve $C$
is called a \emph{fibration}, and it is called
\emph{$\sigma$-invariant} if there is an automorphism $\sigma': C
\to C$ such that $\sigma' f = f \sigma$. The fibration is
\emph{rational} if the generic fiber is a rational curve and
\emph{elliptic} if the generic fiber is elliptic.
\begin{theorem}
\label{DF-sum-thm} \cite[Theorem 0.2]{DF} Let $\sigma: X \dra X$ be
a birational map of an integral projective surface $X$ over
$\mb{C}$. Define the growth data $(\rho, j)$  of $(k(X), \sigma)$ as
in Definition~\ref{bir-data-def}.  If $\rho = 1$, then possibly
after replacing $\sigma: X \dra X$ with a conjugate map $\tau: Y
\dra Y$, where $Y$ is a nonsingular projective surface, we can
assume that exactly one of the following cases occurs:
\begin{enumerate}
\item $j = 0$, $\tau: Y \to Y$ is an automorphism, and
$(\tau^n)^*: N^1(Y) \to N^1(Y)$ is the identity map for some $n \geq
1$.
\item $j = 1$, and there is a $\tau$-invariant rational fibration $f: Y \to
C$.
\item $j = 2$, $\tau: Y \to Y$ is an automorphism, and
there is a $\tau$-invariant elliptic fibration $f: Y \to C$.
\end{enumerate}
\end{theorem}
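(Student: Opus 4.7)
The plan is to derive Theorem~\ref{DF-sum-thm} as a direct translation of the main classification theorem of Diller and Favre \cite[Theorem 0.2]{DF} into the language of the growth data $(\rho, j)$ from Definition~\ref{bir-data-def}. Since the statement is essentially a repackaging of their result, the work is conceptual rather than computational.

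First, I would establish that the growth data $(\rho, j)$ is a birational-conjugation invariant. If $\theta: X \dra Y$ is a birational map and $\tau = \theta \sigma \theta^{-1}$, then the asymptotics of $\|(\tau^n)^*\|$ on $N^1(Y)$ agree with those of $\|(\sigma^n)^*\|$ on $N^1(X)$; this is \cite[Lemma 2.12]{Ro}. With this invariance in hand, we are free to replace the given pair $(X, \sigma)$ by any nonsingular projective birational conjugate, which is exactly what we need in order to arrive at the model $Y$ of the conclusion.

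Next, I would invoke Diller-Favre. Their theorem says that any birational self-map of a smooth projective complex surface can, after a suitable sequence of blow-ups and birational conjugation, be placed into exactly one of four dynamical classes: (a) the degree sequence $\deg(\sigma^n)$ is bounded and some iterate is a biregular automorphism acting trivially on $N^1$; (b) $\deg(\sigma^n)$ grows linearly and the map preserves a rational fibration; (c) $\deg(\sigma^n)$ grows quadratically, the map is conjugate to an automorphism, and it preserves an elliptic fibration; or (d) $\deg(\sigma^n)$ grows exponentially. The first dynamical degree of Diller-Favre coincides with our $\rho$, so the hypothesis $\rho = 1$ excludes case (d) and forces us into (a), (b), or (c). Reading off each case and matching the polynomial growth exponents to $j$ yields conclusions (1), (2), (3) of the theorem respectively.

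The main technical point, not really an obstacle but the one place where care is needed, is to verify that our operator-norm formulation in Definition~\ref{bir-data-def} recovers exactly the Diller-Favre sub-exponential trichotomy. When $\rho = 1$, the sequence $\|(\sigma^n)^*\|$ grows polynomially with degree equal to one less than the largest Jordan block size of $(\sigma^n)^*$ on the generalized eigenspace for eigenvalues of modulus one. Diller-Favre's analysis of the induced action on $N^1$ in each case shows that the corresponding Jordan block structure produces polynomial growth of degree $0, 1, 2$ in their cases (a), (b), (c) respectively. This identifies the numerical invariant $j$ with the geometric case of the classification and completes the translation.
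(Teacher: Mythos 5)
Your proposal is correct and matches the paper's treatment: the paper offers no proof of this statement at all, presenting it purely as a restatement of \cite[Theorem 0.2]{DF} in the language of Definition~\ref{bir-data-def}, and your argument is exactly the routine translation (conjugation-invariance of the growth data via \cite[Lemma 2.12]{Ro}, exclusion of the exponential case by $\rho=1$, and matching of the polynomial growth degree $j$ to the bounded/linear/quadratic cases) that this citation presupposes.
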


Theorem~\ref{DF-sum-thm} will be used in a crucial way in the next
section, where we will use the detailed information the
classification provides to help us prove that if $(X, \sigma)$ has
finite growth type, then it is ordinary.   In the remainder of this
section, we prove an alternative characterization of the property of
finite growth type which includes the case of $\mb{Z}$-graded
algebras. Consider the $\mb{Z}$-graded ring $Q = K[t, t^{-1};
\sigma]$ where $K/k$ is a finitely generated field extension and $t$
has degree $1$.   We say that a $\mb{Z}$-graded $k$-subalgebra $A
\subseteq Q$ is \emph{big} in $Q$ if $A \neq A_0$ and there exist
finitely many homogeneous elements $a_i, b_i \in A$, where $a_i,
b_i$ have the same degree $d_i$ for each $i$, such that the elements
$\{ a_ib_i^{-1} \}$ generate $Q_0 = K$ as a field extension of $k$.
If $A$ is a graded Ore domain, it is clearly big in $Q$ if and only
if $Q_{\rm gr}(A)$ is equal to some Veronese ring of $Q$; the
definition of big allows us to avoid worrying about the Ore
condition.
\begin{proposition}
\label{growth-prop} Let $K/k$ be a finitely generated field
extension with $\trdeg K/k = 2$, where $k$ is algebraically closed,
and let $\sigma \in \aut_k K$. Put $Q = K[t, t^{-1}; \sigma]$. Then
the following are equivalent:
\begin{enumerate}
\item $(K, \sigma)$ has finite growth type;
\item $\GK Q  < \infty$;
\item There exists a $\mb{Z}$-graded big subalgebra $A \subseteq Q$ such that $\GK A < \infty$.
\end{enumerate}
\end{proposition}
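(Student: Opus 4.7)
The plan is to prove the cyclic chain $(2)\Rightarrow(3)\Rightarrow(1)\Rightarrow(2)$, reducing each direction to the connected $\mb{N}$-graded setting treated in \cite[Theorems 1.4 and 1.6]{RZ} together with the Diller--Favre classification recorded as Theorem~\ref{DF-sum-thm}. The implication $(2)\Rightarrow(3)$ is trivial: one takes $A=Q$ itself, which is big in $Q$ since if $\lambda_1,\ldots,\lambda_r$ are field generators of $K/k$ then setting $a_i=\lambda_it$, $b_i=t$ (both of degree~$1$) gives ratios $a_ib_i^{-1}=\lambda_i$ that generate $K$ over $k$.

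For $(3)\Rightarrow(1)$, suppose we are given a big $\mb{Z}$-graded subalgebra $A\subseteq Q$ with $\GK A<\infty$, and let $a_i,b_i\in A$ of respective degrees $d_i$ be the homogeneous elements witnessing bigness. Since $A\ne A_0$ there is a homogeneous element of nonzero degree in $A$; flipping the grading $t\leftrightarrow t^{-1}$ if necessary, we may assume some $c\in A$ has positive degree, and then by replacing each $a_i,b_i$ by $a_ic^{N_i},b_ic^{N_i}$ for $N_i\gg 0$ we may further assume all $d_i>0$ (while preserving the ratios $a_ib_i^{-1}$ and hence bigness). Now $A'=k\langle a_i,b_i,c\rangle$ is a finitely generated connected $\mb{N}$-graded subalgebra of $A$ with $\GK A'\leq \GK A<\infty$. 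By \cite[Corollary I.4.3]{NV}, $Q_{\rm gr}(A')=L[s,s^{-1};\tau]$ for some field $L$ in degree zero and some $s$ of positive degree $m$; bigness of $A'$ forces $L\supseteq k(\{a_ib_i^{-1}\})=K$, hence $L=K$ and $Q_{\rm gr}(A')=K[t^m,t^{-m};\sigma^m]$. Applying \cite[Theorem~1.4]{RZ} together with \cite{Ro} to $A'$ (after passing to a Veronese subring to rewrite as a single-speed $\mb{N}$-graded algebra, if needed) then yields that $(K,\sigma^m)$ has finite growth type, which implies the same for $(K,\sigma)$ since the $\rho$-value of the growth data of $(K,\sigma^m)$ is the $m$-th power of that of $(K,\sigma)$, so one equals $1$ iff the other does.

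For $(1)\Rightarrow(2)$, assuming $(K,\sigma)$ has finite growth type, Theorem~\ref{DF-sum-thm} together with \cite[Theorem~1.6]{RZ} produces a finitely generated connected $\mb{N}$-graded noetherian domain $A_0$ with $\GK A_0<\infty$ and $Q_{\rm gr}(A_0)=K[t^N,t^{-N};\sigma^N]$ for some $N\geq 1$. Since any finitely generated subalgebra of $Q_{\rm gr}(A_0)$ embeds into a localization $A_0[x^{-1}]$ at a single homogeneous element $x$, and such localizations preserve finite GK-dimension in the $\mb{N}$-graded noetherian setting, we obtain $\GK K[t^N,t^{-N};\sigma^N]<\infty$. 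Finally, $Q=K[t,t^{-1};\sigma]$ is a free right module of rank $N$ over its Veronese subring $K[t^N,t^{-N};\sigma^N]$ (with basis $1,t,\ldots,t^{N-1}$), so by the standard invariance of GK-dimension under finite module extensions we conclude $\GK Q<\infty$. The principal subtlety in the argument lies in $(3)\Rightarrow(1)$: the bookkeeping to convert an arbitrary $\mb{Z}$-graded big subalgebra into a finitely generated connected $\mb{N}$-graded subalgebra whose graded quotient ring still recovers the field $K$ in degree zero requires the manipulations above (grading flip and multiplication by positive-degree powers) to force all $d_i>0$, after which the theory for the $\mb{N}$-graded case can be invoked.
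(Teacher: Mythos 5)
Your implications $(2)\Rightarrow(3)$ and $(3)\Rightarrow(1)$ are essentially the paper's. For $(3)\Rightarrow(1)$, multiplying the $a_i,b_i$ by powers of a positive-degree element (flipping the grading first if necessary) to produce a connected finitely generated $\mb{N}$-graded big subalgebra, and then citing \cite{Ro}, is exactly the paper's reduction (stated there in contrapositive form via \cite[Theorems 1.1, 7.1]{Ro}). One caution: your identification of $Q_{\rm gr}(A')$ via \cite[Corollary I.4.3]{NV} presupposes that $A'$ is a graded Ore domain, which is not known; the paper's notion of ``big'' is designed precisely to avoid the Ore condition, and the results of \cite{Ro} you need are formulated for big subalgebras, so that detour should be removed.

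The genuine gap is in $(1)\Rightarrow(2)$, which is the substantial implication. First, \cite[Theorem 1.6]{RZ} applies only when $\sigma:K\to K$ is induced by an actual automorphism of a nonsingular projective surface; in case $j=1$ of Theorem~\ref{DF-sum-thm} (the rational fibration case) the conjugate map $\tau$ is merely birational, and no projective model on which $\sigma$ acts as an automorphism need exist. The paper's proof of $(1)\Rightarrow(2)$ is built precisely for this case: it bounds the growth of an arbitrary finitely generated subalgebra of $Q$ by that of $(2n+1)\dim_k\bigl(WW^{\sigma}\cdots W^{\sigma^{2n}}\bigr)^n$, passes to a stable model, and controls these dimensions by cohomology vanishing, Riemann--Roch, and the polynomial growth of intersection numbers from \cite{Ro}. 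Second, your key step --- that a localization $A_0[x^{-1}]$ at a single homogeneous element preserves finite GK-dimension --- is not a general fact: GK-dimension can jump, even to $\infty$, under Ore localization (for instance, the quotient division ring of the Weyl algebra has exponential growth while the Weyl algebra has GK-dimension $2$). In the present setting this claim is essentially the assertion $(3)\Rightarrow(2)$, i.e.\ the content of the proposition itself, so invoking it is circular. This step requires the paper's geometric argument or a genuine substitute for it.
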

\begin{proof}
$(1) \implies (2)$: If there exists a nonsingular projective surface
$X$ with $K = k(X)$ such that the corresponding induced map on $X$
is an \emph{automorphism} $\sigma: X \to X$, the result $\GK Q <
\infty$ has already been shown in \cite[Theorem 1.6]{RZ}.  We now
show that a similar argument works in general, using also the
techniques from \cite{Ro}.  (Thus this argument is needed only if
case (2) of Theorem~\ref{DF-sum-thm} occurs, but it does not seem to
help to restrict to this case.)

First, given any nonsingular projective surface $X$ with $K = k(X)$,
we have an induced birational map $\sigma: X \dra X$.  The
birational map $\sigma$ induces a homomorphism $\sigma^*: \Pic X \to
\Pic X$ which is essentially pullback of divisors \cite[Definition
2.1]{Ro}.  Now for an invertible sheaf $\mc{M} \subseteq \mc{K}$,
where $\mc{K}$ is the constant sheaf of rational functions on $X$,
we write $\mc{M}^{\sigma}$ for $\sigma^* \mc{M} \subseteq \mc{K}$.
We then set $\mc{M}_n = \mc{M} \otimes \mc{M}^{\sigma} \otimes \dots
\otimes \mc{M}^{\sigma^{n-1}} \subseteq \mc{K}$ for each $n \geq 1$.
For a $k$-subspace $V \subseteq K$, we also write $V^{\sigma} =
\sigma^{-1}(V)$.  Now by \cite[Theorem 2.10]{Ro}, we can choose $X$
initially so that $\sigma$ is \emph{stable} in the sense of
\cite[Definition 2.7]{Ro}; the important consequence of this is that
if $V = \HB^0(X, \mc{M}) \subseteq K$, then $VV^{\sigma} \dots
V^{\sigma^{n-1}} \subseteq \HB^0(X, \mc{M}_n)$ \cite[Lemma 4.1]{Ro}.
 If $\mc{M}$ is a very ample invertible
sheaf on $X$, possibly after replacing
$\mc{M}$ by a large tensor power we also have the following good
vanishing properties:  $\HB^i(X, \mc{M}_n^{\otimes m}) = 0$ for all
$i \geq 1$, $n \geq 1$, $m \geq 1$  \cite[Lemma 4.4]{Ro}.

We now follow the proof of \cite[Theorem 1.6]{RZ}.  The argument
there shows that the growth of any finitely generated subalgebra of
$Q$ is bounded above by the growth of the sequence $(2n + 1) \dim_k
\big(W W^{\sigma} \dots W^{\sigma^{2n}}\big)^n$ for some subspace $W
\subseteq K$ with $1 \in W$.  We may choose a very ample sheaf
$\mc{M} \subseteq \mc{K}$ with good vanishing properties as above,
and such that $W \subseteq \HB^0(X, \mc{M}) \subseteq K$ \cite[Lemma
5.2]{RZ}.  Then
\[
\big(W W^{\sigma} \dots W^{\sigma^{2n}}\big)^n \subseteq \HB^0(X,
\mc{M}_{2n+1}^{\otimes n}) \subseteq K.
\]
Since all of the sheaves $\mc{M}_{2n+1}^{\otimes n}$ have vanishing
higher cohomology, the Riemann Roch formula gives that $\dim_k
\HB^0(X, \mc{M}_{2n+1}^{\otimes n})$ grows like the intersection
numbers $(\mc{M}_{2n+1}^{\otimes n} . \mc{M}_{2n+1}^{\otimes n}
\otimes \omega^{-1})$, where $\omega$ is the canonical sheaf on $X$.
Finally, these intersection numbers have polynomial growth in $n$,
by \cite[Lemma 5.4, 5.6]{Ro}; in fact, they grow no faster than
$n^{j + 4}$, where $(1,j)$ is the growth data of $\sigma$ as in
Definition~\ref{bir-data-def}. So one has the even stronger result
that $\GK Q \leq j + 5$.

$(2) \implies (3)$: This is trivial.

$(3) \implies (1)$: We show the contrapositive, so suppose that $(K,
\sigma)$ has infinite growth type.  The idea is to reduce to the
$\mb{N}$-graded case, where the result has already been shown in
\cite{Ro}.   Suppose that $A \subseteq Q$ is a $\mb{Z}$-graded big
subalgebra, so $A \neq A_0$.   Choose homogeneous elements $a_i, b_i
\in A$, where $a_i, b_i \in A_{d_i}$ for each $i$, such that the
elements $\{ a_ib_i^{-1} \}$ generate $K$ as a field extension of
$k$.  Suppose that $A$ contains a nonzero homogeneous element of
positive degree, say $s \in A_d$ for some $d > 0$.  Then for $n \gg
0$, the elements $a_i' = a_is^n, b_i' = b_is^n$ all have positive
degree, and $A' = k \langle a'_1, \dots, a'_n, b'_1, \dots, b'_n
\rangle $ is now a connected finitely generated $\mb{N}$-graded big
subalgebra of $Q$ with $A' \subseteq A$.  By \cite[Theorem 1.1,
Theorem 7.1]{Ro}, since $(K, \sigma)$ has infinite growth type, $A'$
has exponential growth, so the same is true of $A$.  If $A$ contains
instead an element of negative degree, apply the same argument to
the graded ring $\wt{A} = \bigoplus_{n \in \mb{Z}} A_{-n}$.
\end{proof}

To conclude this section, we show that the property of finite growth
type is invariant under base extension.
\begin{lemma}
\label{fgt-ext-lem} Let $k \subseteq \ell$ with $\ell$ also
algebraically closed. Given an extension $k \subseteq K$ where
$\trdeg_k K = 2$, let $L$ be the field of fractions of $K \otimes_k
\ell$. Given an automorphism $\sigma \in \aut_k K$, consider the
induced automorphism $\sigma' \in \aut_{\ell} L$. Then $(K, \sigma)$
has finite growth type if and only if $(L, \sigma')$ does.
\end{lemma}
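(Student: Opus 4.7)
The plan is to reduce to a statement about GK-dimension by way of Proposition~\ref{growth-prop} and then to exploit two routine facts: that GK-dimension is preserved under base extension by a field, and that it is monotone under passage to subalgebras. Set $Q = K[t,t^{-1};\sigma]$ and $Q' = L[t,t^{-1};\sigma']$. By Proposition~\ref{growth-prop}, $(K,\sigma)$ has finite growth type if and only if $\GK Q < \infty$, and similarly for $(L,\sigma')$ and $Q'$.

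The key observation I would use is that $Q\otimes_k \ell$ embeds naturally as a $\mb{Z}$-graded $\ell$-subalgebra of $Q'$ and, moreover, is big in $Q'$ in the sense introduced before Proposition~\ref{growth-prop}. Indeed, since $k$ is algebraically closed, $R = K\otimes_k \ell$ is a domain by Lemma~\ref{ac-ext-lem}, and $Q\otimes_k\ell$ is canonically identified with the $\mb{Z}$-graded skew-Laurent ring $R[t,t^{-1};\sigma\otimes\id_\ell]$. The automorphism $\sigma\otimes\id_\ell$ extends uniquely to $L=\operatorname{Frac}(R)$, and this extension is $\sigma'$, so we obtain an injection of $\mb{Z}$-graded $\ell$-algebras $Q\otimes_k \ell \hookrightarrow Q'$. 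To check bigness, note that $t$ lies in positive degree, and that $L/\ell$ is finitely generated as a field (since $K/k$ is); picking generators $x_1,\dots,x_n$ of $L$ over $\ell$ and writing each as $x_i = a_ib_i^{-1}$ with $a_i,b_i\in R=(Q\otimes_k\ell)_0$ exhibits the required homogeneous elements of common degree.

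For the forward direction, suppose $\GK Q < \infty$. Extension of scalars by a field preserves GK-dimension, so $\GK(Q\otimes_k\ell) = \GK Q < \infty$, and applying Proposition~\ref{growth-prop}, (3)$\Rightarrow$(1), to the big subalgebra $Q\otimes_k\ell\subseteq Q'$ forces $(L,\sigma')$ to have finite growth type. Conversely, if $\GK Q' < \infty$, then the inclusion $Q\hookrightarrow Q\otimes_k\ell\hookrightarrow Q'$ of $k$-subalgebras gives $\GK Q \le \GK Q' < \infty$, so $(K,\sigma)$ has finite growth type. There is no serious obstacle in this plan; the only mild point of care is verifying bigness, which collapses to the finite generation of $L/\ell$.
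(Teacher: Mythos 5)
Your proposal is correct and follows essentially the same route as the paper: the paper also identifies $Q\otimes_k\ell\cong (K\otimes_k\ell)[t,t^{-1};\sigma']$ as a big $\ell$-subalgebra of $Q'$, uses the invariance of GK-dimension under base field extension, and then invokes Proposition~\ref{growth-prop} in both directions. Your write-up just spells out the bigness verification and the converse inclusion that the paper leaves as ``follows immediately.''
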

\begin{proof}
Note that $K \otimes_k \ell$ is a domain by Lemma~\ref{ac-ext-lem},
so we can indeed form the field of fractions $L$.  Let $Q = K[t,
t^{-1}; \sigma]$, $Q' = L[t, t^{-1}; \sigma']$, and $R = Q \otimes_k
\ell \cong (K \otimes_k \ell)[t, t^{-1}; \sigma']$.  Then $\GK_k Q =
\GK_{\ell} R$, and $R$ is a big $\ell$-subalgebra of $Q'$.  The
result now follows immediately from Proposition~\ref{growth-prop}.
\end{proof}

\section{Ordinary automorphisms}

\label{ord-auto-sec}

Suppose that $X$ is a scheme of finite type over $k$ with
automorphism $\sigma$.   The goal of this section is to study when
$\sigma$ is ordinary using the methods of algebraic geometry.  Thus
we assume Hypothesis~\ref{k-hyp2} throughout this section.

The most general method we present for proving that an automorphism
$\sigma$ is ordinary is the following:  if $\sigma$ is an element of
an algebraic group of automorphisms, then $\sigma$ is ordinary (see
Proposition~\ref{alg-gp-prop} below.)  This result applies to some
important special cases of automorphisms of varieties of arbitrary
dimension, for example any automorphism of a complete toric variety.
We then examine the case where $X$ is a surface in greater detail.
Here, we will need to apply much more detailed information about the
automorphism groups of surfaces to prove Theorem~\ref{main-ord-thm}
from the introduction, namely that automorphisms of finite growth
type are ordinary.

In this section, all divisors on $X$ will be Cartier divisors, and
we use $\sim$ for linear equivalence of divisors.  We start by
recording some of the more elementary observations about ordinary
automorphisms.  We will use the following easy lemma, whose proof we
leave to the reader, without comment below.  In particular, the
lemma shows that in studying ordinary automorphisms we can reduce to
the case of an integral scheme.

\begin{lemma}
\label{components-lem} Let $X$ be a finite type $k$-scheme with
$k$-automorphism $\sigma$.
\begin{enumerate}
\item $(X, \sigma)$ is ordinary if and only if $(X_{red}, \sigma)$
is.

\item For any $n \geq 1$, $(X, \sigma)$ is ordinary if and only if $(X,
\sigma^n)$ is ordinary.

\item Let $X$ be a reduced scheme with irreducible components $X_1, \dots, X_d$.  Then choosing any $n
\geq 1$ so that $\sigma^n(X_i) = X_i$ for all $i$, $(X, \sigma)$ is
ordinary if and only if $(X_i,  \sigma^n \vert_{X_i})$ is ordinary
for all $i$.
\end{enumerate}
\end{lemma}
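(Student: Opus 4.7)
The plan is to dispatch part (1) by a pure topological observation, and to derive parts (2) and (3) from a single lemma about dense orbits on irreducible schemes.

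Part (1) should be essentially formal. The definitions of $\sigma$-irreducible closed subset (Definition~\ref{sigma-irr-def}) and of good dense orbits (Definition~\ref{gdo-def}) both refer only to the underlying topological space of $X$ and the homeomorphism induced by $\sigma$. Since $X$ and $X_{\rm red}$ share the same underlying space with the same induced $\sigma$-action, every $\sigma$-irreducible closed subset of one corresponds canonically to one of the other, and a point has dense $\sigma$-orbit inside such a set in one setting if and only if it does in the other.

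For part (2), the technical input I would establish first is the following observation: if $\tau$ is an automorphism of an irreducible $k$-scheme $Y$ and $m \geq 1$, then a point $y \in Y$ has dense $\tau$-orbit in $Y$ if and only if it has dense $\tau^m$-orbit. The $\Leftarrow$ direction is immediate. For $\Rightarrow$, one writes $\overline{\mc{O}_y^{\tau}} = \bigcup_{i=0}^{m-1} \tau^i(\overline{\mc{O}_y^{\tau^m}})$ as a finite union of closed sets; if this equals $Y$, irreducibility forces some $\tau^i(\overline{\mc{O}_y^{\tau^m}}) = Y$, and hence $\overline{\mc{O}_y^{\tau^m}} = Y$. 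Next, I would unwind ordinariness concretely: a $\sigma$-irreducible closed $Z$ with irreducible components $Z_1,\dots,Z_k$ has good dense $\sigma$-orbits if and only if the set $\{z \in Z_i : \mc{O}_z^{\sigma^k} \text{ is dense in } Z_i\}$ is open in each $Z_i$ (these open conditions for different $i$ are interchanged by $\sigma$, so they hold simultaneously). Thus $(X,\sigma)$ is ordinary exactly when, for every irreducible closed $Y \subseteq X$ with finite $\sigma$-orbit of period $k$, the set $\{y \in Y : \mc{O}_y^{\sigma^k} \text{ is dense in } Y\}$ is open in $Y$. The observation above shows that this set does not change when $\sigma^k$ is replaced by any positive power $\sigma^{Nk}$. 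Since an irreducible closed $Y$ has finite $\sigma$-orbit if and only if it has finite $\sigma^n$-orbit, and since $n$ times the $\sigma^n$-period of such a $Y$ is a multiple of the $\sigma$-period, the reformulated ordinariness conditions for $\sigma$ and for $\sigma^n$ are word-for-word identical. This yields~(2).

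For part (3), I would first apply (2) to replace $\sigma$ by $\sigma^n$, so that $\sigma$ itself stabilizes every component $X_i$. Any $\sigma$-invariant closed $W \subseteq X$ then decomposes as $W = \bigcup_i (W \cap X_i)$; if $W$ is $\sigma$-irreducible, then since $\sigma$ cyclically permutes the irreducible components of $W$ but cannot move a component of $W$ out of its ambient $X_i$, the entire set $W$ must lie in a single $X_i$. Hence the $\sigma$-irreducible subsets of $X$ are precisely the disjoint union, over $i$, of the $(\sigma|_{X_i})$-irreducible subsets of $X_i$, and the good-dense-orbits condition decouples across the components. The only substantive step in the whole argument is the density observation in the second paragraph; everything else is formal unwinding of the definitions.
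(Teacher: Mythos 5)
Your proof is correct: part (1) is indeed purely topological, the dense-orbit observation for powers of an automorphism of an irreducible scheme is the right key lemma for (2), and the decoupling of $\sigma$-irreducible subsets across components (after invoking (2)) handles (3). The paper explicitly leaves this lemma's proof to the reader, so there is no argument to compare against; what you have written is a complete and accurate version of the intended routine verification.
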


Next, we see that the case of a curve is easy.  We remark that part
(1) of the next lemma also follows from
Theorem~\ref{inv-divisor-thm}, but we prefer to give a more direct
proof.

\begin{lemma}
\label{curves-lem} Let $\sigma: X \to X$ be an automorphism of a
finite-type $k$-scheme $X$.
\begin{enumerate}
\item If $\dim X \leq 1$, then $(X, \sigma)$ is ordinary.
\item If $X$ is $\sigma$-irreducible and $\dim X = 2$, 
then $(X, \sigma)$ is ordinary if and only if $(X, \sigma)$ has good
dense orbits.
\end{enumerate}
\end{lemma}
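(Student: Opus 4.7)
The plan is to prove part (1) by reducing the statement to the case of an integral curve, then to handle that case using Theorem~\ref{inv-divisor-thm}, and finally to derive part (2) from part (1). For part (1), fix a $\sigma$-irreducible closed subset $Z \subseteq X$; we must show $(Z,\sigma|_Z)$ has good dense orbits. Let $Z_1,\dots,Z_d$ be the irreducible components of $Z$ with their reduced induced structure, cyclically permuted by $\sigma$. For $x \in Z_i$, the orbit $\mc{O}_x$ is dense in $Z$ if and only if its $\sigma^d$-orbit is dense in $Z_i$, so the ``good dense orbit'' locus $U\subseteq Z$ meets each $Z_i$ in exactly the corresponding locus $U_i$ for $(Z_i,\sigma^d|_{Z_i})$. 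Because the $Z_i$ are closed and cover $Z$, $U$ is open in $Z$ if and only if each $U_i$ is open in $Z_i$. Thus it suffices to show $(C,\tau)$ has good dense orbits whenever $C$ is an integral $k$-scheme with $\dim C \le 1$ and $\tau$ is a $k$-automorphism.

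The case $\dim C = 0$ is trivial, so take $\dim C = 1$. Proper closed subsets of $C$ are finite, so $\mc{O}_x$ is dense in $C$ if and only if $x$ is not $\tau$-periodic, and good dense orbits amounts to the periodic set $P = \bigcup_{n\ge 1}\mathrm{Fix}(\tau^n)$ being closed in $C$. If $\tau$ has finite order then $P = C$ and we are done, so assume $\tau$ has infinite order; then each $\mathrm{Fix}(\tau^n)$ is a finite proper closed subset. By Lemma~\ref{amitsur-trick-lem}, the countable union $P$ cannot exhaust $C$, so $\tau$ has a dense orbit on $C$. The $\tau$-irreducible codimension-$1$ closed subsets of $C$ are precisely the individual finite $\tau$-orbits of points of $P$, so the contrapositive of Theorem~\ref{inv-divisor-thm}---the existence of a dense orbit forces no non-constant $\tau$-invariant rational function, hence only finitely many $\tau$-irreducible codimension-$1$ subsets---shows that $P$ consists of only finitely many orbits. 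Therefore $P$ is finite, and in particular closed.

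For part (2), the forward implication is immediate from the definition of ordinary applied to $Z = X$. Conversely, suppose $(X,\sigma)$ has good dense orbits. Since $X$ is $\sigma$-irreducible of dimension $2$, all irreducible components of $X$ must have dimension $2$: otherwise the union of the two-dimensional components and the union of the strictly lower-dimensional ones would give a nontrivial $\sigma$-invariant decomposition of $X$, contradicting $\sigma$-irreducibility. Hence every proper $\sigma$-invariant closed subset $Z \subsetneq X$ satisfies $\dim Z \le 1$, and part (1) applied with $X$ replaced by $Z$ provides good dense orbits for $(Z,\sigma|_Z)$; combined with the hypothesis on $(X,\sigma)$ itself, this shows $(X,\sigma)$ is ordinary. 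I expect the main obstacle in this plan to be controlling the periodic orbits on an integral curve: without Theorem~\ref{inv-divisor-thm} one would be forced into a case-by-case genus analysis on the normalization to see that an infinite-order curve automorphism has only finitely many periodic orbits, whereas the cited theorem bypasses this analysis by converting the question into the existence of an invariant rational function.
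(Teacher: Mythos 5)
Your argument is correct, and for part (1) it takes exactly the alternative route that the authors mention in the remark immediately preceding the lemma but decline to follow. After reducing to an integral curve $C$ with an infinite-order automorphism $\tau$, you obtain a dense orbit from Lemma~\ref{amitsur-trick-lem} and then apply the contrapositive of Theorem~\ref{inv-divisor-thm}: a dense orbit forbids a nonconstant invariant rational function, hence there are only finitely many $\tau$-irreducible codimension-$1$ subsets, i.e., finitely many periodic orbits, so the periodic locus is closed. The paper instead gives a direct proof: it passes to the nonsingular projective model $Y$ of $k(C)$ and quotes the classical fact that an infinite-order automorphism of such a curve has at most two periodic points, all of them fixed (citing \cite[p.14]{AS1}). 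Your route has the appeal of uniformity---it is the same mechanism that drives the surface case---but it imports the comparatively heavy machinery of Section~\ref{base-ext-sec} (reduction to a finitely generated base field and finite generation of the divisor class group), and it requires the integral curve to be quasi-projective, which is automatic for separated finite-type curves but is not literally among the hypotheses of the lemma as stated; the paper's proof is more elementary and avoids both issues. The remaining steps agree in substance: your component-by-component reduction to the integral case is the special case of Lemma~\ref{components-lem} that the paper simply cites, and your part (2) (noting that $\sigma$-irreducibility forces every proper $\sigma$-invariant closed subset to have dimension at most $1$, so part (1) handles all proper $Z$) is exactly what the paper means by ``an immediate consequence of part (1).''
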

\begin{proof}
(1) By Lemma~\ref{components-lem}, we may assume $X$ is integral.
The case $\dim X = 0$ is trivial, so assume $\dim X = 1$. There is a
birational map $f: Y \dra X$ where $Y$ is the unique nonsingular
projective curve with $k(Y) = k(X)$, and an induced automorphism
$\tau: Y \to Y$ such that $f \tau = \sigma f$; it clearly suffices
to show that $(Y, \tau)$ is ordinary. As is well-known, any
automorphism of $Y$ is either of finite order or else its set of
periodic points consists of $0,1,$ or $2$ fixed points (for example,
see \cite[p.14]{AS1}). The result easily follows.

(2) Again we may reduce to the integral case, and the result is an
immediate consequence of part (1).
\end{proof}

The property of having good dense orbits passes between schemes
related by a dominant rational map.
\begin{lemma}
\label{dom-ord-lem} Let $X$ and $Y$ be integral finite-type
$k$-schemes with $\dim X = \dim Y$, and let $f: X \dra Y$ be a
dominant rational map. Suppose that $\sigma: X \to X$ and $\tau:Y
\to Y$ are automorphisms such that $f \sigma = \tau f$ (as rational
maps). Then $(X, \sigma)$ has good dense orbits if and only if $(Y,
\tau)$ has good dense orbits.
\end{lemma}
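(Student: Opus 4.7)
The plan is to use Theorem~\ref{unc-irr-thm} to split the property of good dense orbits into two mutually exclusive cases: (a) there is no dense $\sigma$-orbit (equivalently, a nonconstant $\sigma$-invariant rational function exists), and (b) a dense orbit exists together with only finitely many maximal $\sigma$-irreducible closed subsets. Good dense orbits holds precisely in case (a) (where $U_X = \emptyset$ is trivially open) or case (b) (where $U_X$ is the complement of finitely many proper $\sigma$-invariant closed subsets). So it is enough to show that each of (a) and (b) transfers between $(X, \sigma)$ and $(Y, \tau)$.

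For case (a), the map $f$ is dominant rational with $\dim X = \dim Y$, so the pullback $f^*\colon k(Y)\hookrightarrow k(X)$ is a finite field extension, and the relation $f\sigma = \tau f$ shows that $\sigma$ preserves $f^*(k(Y))$ with restriction equal to $\tau$ under the identification $k(Y)\cong f^*(k(Y))$. A nonconstant $\tau$-invariant $h\in k(Y)$ pulls back to a nonconstant $\sigma$-invariant $f^*(h)\in k(X)$. Conversely, given a nonconstant $\sigma$-invariant $g\in k(X)$, its minimal polynomial $p(T)\in k(Y)[T]$ satisfies $\sigma(p)(g)=\sigma(p(g))=0$, so by uniqueness of the minimal polynomial $\sigma(p)=p$, forcing each coefficient of $p$ to be $\tau$-invariant. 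If all the coefficients lay in $k$ then $g$ would be algebraic over $k$ and so in $k$, contradicting nonconstancy. By Theorem~\ref{unc-irr-thm}, (a) holds for $(X,\sigma)$ iff it holds for $(Y,\tau)$.

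For case (b), assume both sides have dense orbits, and suppose $Y$ has finitely many maximal $\tau$-irreducibles $W_1,\dots,W_m$. I would produce a proper $\sigma$-invariant closed subset of $X$ containing every maximal $\sigma$-irreducible; the natural candidate is $T_X=\bigcup_i\overline{f^{-1}(W_i)}$, where the preimage is taken on the domain $U_f$ of $f$. Each summand is $\sigma$-invariant and proper in $X$ by dimension, since $f$ is generically finite and $\dim W_i<\dim Y=\dim X$. The key step is to show that every $\sigma$-invariant proper closed $Z\subseteq X$ lies in $T_X$. To this end, pass to the $\sigma$-invariant set $V=\bigcap_{n\in\mb{Z}}\sigma^n(U_f)$, whose complement is a countable union of proper closed subsets of $X$ and hence by Lemma~\ref{amitsur-trick-lem} does not cover $X$. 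When $Z\cap V$ is dense in $Z$, the closure $\overline{f(Z\cap V)}$ is a proper $\tau$-invariant closed subset of $Y$, hence contained in some $W_i$, which forces $Z\cap V\subseteq f^{-1}(W_i)$ and thus $Z\subseteq T_X$. The reverse implication is symmetric, replacing $\overline{f^{-1}(W_i)}$ with $\overline{f(Z_j\cap V)}$ for the maximal $\sigma$-irreducibles $Z_j$ of $X$.

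The main obstacle is handling $\sigma$-invariant closed subsets of $X$ having an irreducible component contained in the indeterminacy locus $X\setminus U_f$ (so that the entire $\sigma$-orbit of that component may miss $U_f$ and the above argument does not directly apply). One clean way to circumvent this is to resolve the rational map by passing to the graph $\Gamma_f\subseteq X\times Y$, equipped with the induced automorphism $\widetilde\sigma=(\sigma,\tau)$ and the morphisms $\pi_X\colon\Gamma_f\to X$ (birational) and $\pi_Y\colon\Gamma_f\to Y$ (dominant generically finite). After a suitable normalization one may reduce to the case that $f$ is itself a finite surjective morphism, in which case the preimage and image of closed sets behave transparently and the correspondence of $\sigma$-invariant closed subsets is immediate via $W\mapsto f^{-1}(W)$ and $Z\mapsto f(Z)$; a short parallel argument, relying on the fact that $\pi_X$ is an isomorphism above $U_f$, then transfers good dense orbits between $\Gamma_f$ and $X$.
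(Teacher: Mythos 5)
Your overall strategy --- splitting ``good dense orbits'' via Theorem~\ref{unc-irr-thm} into the no-dense-orbit case (transferred through the algebraic extension $f^*\colon k(Y)\hookrightarrow k(X)$ by a minimal-polynomial argument) and the dense-orbit case (transferred by comparing the finitely many maximal irreducibles on each side) --- is sound in outline and genuinely different from the paper's proof, but it is much heavier, and the step you yourself flag as the obstacle is a real gap that your proposed repair does not close. Passing to the graph $\Gamma_f$ and ``a suitable normalization'' to reduce to a finite surjective morphism is not a valid reduction here: $X$ and $Y$ are only quasi-projective, so $\pi_Y\colon \Gamma_f \to Y$ need be neither proper nor surjective and no Stein factorization is available to make it finite; moreover, replacing $X$ by $\Gamma_f$ changes the scheme whose $\sigma$-irreducibles you are counting, so you would still owe an argument transferring the count across the birational morphism $\pi_X$. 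There is also a second, symmetric gap you do not mention: in the reverse direction of case (b), a maximal $\tau$-irreducible $W$ of $Y$ may fail to meet $f(U_f)$ at all (the image of a dominant morphism is only a constructible dense set), in which case $f^{-1}(W)=\emptyset$ and your argument says nothing about $W$.

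Both gaps admit a far simpler fix than the graph construction. Since $f\sigma=\tau f$ as rational maps and $\tau$ is an automorphism, the maximal domain of definition satisfies $\sigma^{-1}(U_f)=U_f$; hence $X\setminus U_f$ is a proper closed $\sigma$-invariant set, and any $\sigma$-irreducible $Z$ either lies entirely inside it --- and the maximal such $Z$ are exactly the finitely many orbits of $\sigma$ on the irreducible components of $X\setminus U_f$ --- or meets $U_f$ in a dense subset, where your main argument applies; the set $\overline{Y\setminus f(U_f)}$ plays the same role on the $Y$ side. With these points repaired your proof goes through, but compare it with the paper's half-page argument: after restricting to $U_f$ exactly as above, one checks directly that the dense-orbit locus $V\subseteq X$ equals $f^{-1}(W)$ for the dense-orbit locus $W\subseteq Y$ (using only that $f$ is dominant and $\dim X=\dim Y$, so that $\overline{\mc{O}_x}=X$ if and only if $\overline{\mc{O}_{f(x)}}=Y$), which gives one implication at once; the other follows because $\overline{Y\setminus f(X)}$ and $\overline{f(X\setminus V)}$ are proper closed $\tau$-invariant sets. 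No appeal to Theorem~\ref{unc-irr-thm} or to invariant rational functions is needed.
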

\begin{proof}
Let $U \subseteq X$ be the maximal open set on which $f$ is defined;
then $U$ is $\sigma$-invariant.  Now $(X, \sigma)$ clearly has good
dense orbits if and only if $(U, \sigma \vert_U)$ does, so we may
replace $X$ by its open subscheme $U$ and suppose from now on that
$f$ is a dominant morphism.

Let $V \subseteq X$ be the set of points lying on a dense
$\sigma$-orbit, and let $W \subseteq Y$ be the set of points which
lie on a dense $\tau$-orbit.  It follows easily from the facts that
$\dim X = \dim Y$ and $f$ is dominant that $f^{-1}(W) = V$.  Thus if
$(Y, \tau)$ has good dense orbits then so does $(X, \sigma)$.

Suppose conversely that $(X, \sigma)$ has good dense orbits.  Since
$f$ is dominant, $f(X)$ contains an open dense subset of $Y$;
letting $T = \overline{Y \setminus f(X)}$, it is clear that $T$ is a
proper closed $\tau$-invariant subset of $Y$.  Thus if $X$ has no
dense $\sigma$-orbit, then $Y$ has no dense $\tau$-orbit. Assume now
that $X$ does have a dense orbit, so $V$ is an open dense subset.
Putting $Z = X \setminus V$, we see that $\overline{f(Z)}$ is a
proper closed $\tau$-invariant subset in $Y$. Thus $W = Y \setminus
(T \cup \overline{f(Z)})$ is open and $(Y, \tau)$ has good dense
orbits in this case as well.
\end{proof}

Next, we turn to our most general methods for proving an
automorphism has good dense orbits.  We emphasize that for us an
algebraic group is  a reduced group scheme of finite type over the
ground field.
\begin{proposition}
\label{alg-gp-prop} Let $X$ be an integral scheme of finite type
over $k$ with automorphism $\sigma$.
\begin{enumerate}
\item If an algebraic group $G$ acts on $X$ with some $g \in G$ acting by $\sigma$, then $(X, \sigma)$ is
ordinary.

\item If $D$ is an ample Cartier divisor on $X$ with $(\sigma^n)^* D \sim D$ for some $n \geq 1$, then $(X, \sigma)$
is ordinary.

\item If $D$ is a big Cartier divisor on $X$ with $(\sigma^n)^* D \sim D$ for some $n \geq 1$, then $(X, \sigma)$ has good dense orbits.
\end{enumerate}
\end{proposition}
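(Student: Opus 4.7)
The plan is to establish (1) using the structure of algebraic group actions, and then deduce (2) and (3) by embedding $\sigma^n$ into such an action via the complete linear system $|mD|$ for suitable $m$.

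For part (1), fix a $\sigma$-irreducible closed subset $Z \subseteq X$; the goal is to verify that $(Z,\sigma\vert_Z)$ has good dense orbits, and by Lemma~\ref{components-lem}(2),(3) one may reduce to the case that $Z$ is integral after passing to a suitable power of $\sigma$. The Zariski closure $H := \overline{\langle \sigma \rangle}$ is an algebraic subgroup of $G$, and since the action morphism $G \times Z \to X$ sends the dense subset $\langle \sigma \rangle \times Z$ into the closed set $Z$, it carries $H \times Z$ into $Z$; thus $H$ restricts to an action on $Z$. For any $z \in Z$, continuity of the orbit map $h \mapsto h \cdot z$ forces the $\sigma$-orbit of $z$ to have the same closure in $Z$ as the $H$-orbit, and since $Z$ is integral this closure coincides with the closure of the orbit of the identity component $H^\circ$, a connected commutative algebraic group. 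By upper semicontinuity of the stabilizer dimension, the set of $z \in Z$ whose $H^\circ$-orbit has dimension $\dim Z$ is open in $Z$; on an integral variety such a top-dimensional orbit is automatically dense, which proves (1).

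For part (2), fix $m \geq 1$ with $mD$ very ample, giving a closed embedding $X \hookrightarrow \mb{P}^N$ where $N + 1 = \dim_k H^0(X, \O_X(mD))$. The relation $(\sigma^n)^*(mD) \sim mD$ means $(\sigma^n)^*$ acts as a linear automorphism on $H^0(X, \O_X(mD))$, determined up to scalar, so $\sigma^n$ extends to an element of $\pgl_{N+1}$ acting on $\mb{P}^N$ and preserving $X$; applying (1) to $(X, \sigma^n)$ and then Lemma~\ref{components-lem}(2) gives that $(X, \sigma)$ is ordinary. For part (3), the bigness of $D$ means that for $m \gg 0$ the rational map $\phi := \phi_{|mD|}: X \dra Y \subseteq \mb{P}^N$ is birational onto a projective variety $Y$; exactly as in (2), $\sigma^n$ extends to a linear automorphism $\tau$ of $\mb{P}^N$ preserving $Y$, with $\phi \circ \sigma^n = \tau \circ \phi$. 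Part (2) applied to $(Y, \tau)$ yields that $(Y, \tau)$ has good dense orbits, which transfer back to $(X, \sigma^n)$ via Lemma~\ref{dom-ord-lem}. Since $X$ is integral, the loci of points with dense $\sigma^n$- and with dense $\sigma$-orbit coincide, so $(X, \sigma)$ has good dense orbits.

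The main subtlety, and the reason (3) only delivers good dense orbits rather than full ordinariness, is that a big divisor on $X$ need not restrict to a big divisor on a proper $\sigma$-invariant subvariety $Z$---since $Z$ could lie in the stable base locus of $|mD|$ or be contracted by $\phi_{|mD|}$---so the argument cannot simply be iterated inside arbitrary $\sigma$-irreducible subsets as it can in the ample case of (2). On the technical side, the main bookkeeping is to check that the Zariski closure $\overline{\langle \sigma \rangle}$ always preserves each $\sigma$-invariant closed subset, and to carry out the reductions between $\sigma$ and its powers and between $X$ and its integral components via Lemma~\ref{components-lem}.
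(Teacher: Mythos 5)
Your proof is correct and follows essentially the same route as the paper's: part (1) is reduced to the action of the algebraic group $H=\overline{\langle g\rangle}$, which preserves every $\sigma$-invariant closed subset, and (2), (3) are obtained by lifting $\sigma^{n}$ to a linear automorphism of the ambient projective space via $|mD|$ and pulling good dense orbits back through Lemma~\ref{dom-ord-lem}. The only cosmetic difference is in (1), where the paper identifies the dense-orbit locus directly as the single open $H$-orbit (orbits of algebraic groups are locally closed, and $H$ is abelian) while you use semicontinuity of orbit dimension; just note that $\overline{Hz}$ is a finite union of translates of $\overline{H^{\circ}z}$ rather than literally equal to it, which does not affect your conclusion that density of the $\sigma$-orbit is equivalent to $\dim H^{\circ}z=\dim Z$.
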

\begin{proof}
(1) By Corollary~\ref{gdo-ord-cor}, we need to show, given any
$\sigma$-irreducible closed subset $Z \subseteq X$, that $(Z, \sigma
\vert_Z)$ has good dense orbits.  Let $H = \bbar{ \{ g^n | n \in
\mb{Z}\}} \subseteq G$. Then $H$ acts on $X$, and the $H$-invariant
subschemes of $X$ are precisely the $\sigma$-invariant subschemes of
$X$. Since $H$ thus also acts on any $\sigma$-invariant $Z$, it is
enough to show that $(X, \sigma)$ has good dense orbits.  If $X$ has
no dense $\sigma$-orbits, this is true by definition.  If $x \in X$
has a dense $\sigma$-orbit, let $U = Hx$. Any orbit of an algebraic
group action is locally closed \cite[Lemma 2.3.3]{Sp}, so $U$ is
open inside $\bbar{U}=X$. As $H$ is abelian, $U$ is precisely the
set of those $y \in X$ with a dense $\sigma$-orbit.

(2) We may replace $\sigma$ by $\sigma^n$ without loss of
generality.  Then some multiple $mD$ of $D$ is very ample and
$\sigma^*(mD) \sim mD$. Taking the projective embedding $\phi: X \to
\mb{P}^d$ corresponding to the complete linear series $|mD|$, we see
that $\sigma$ lifts to an automorphism $\wt{\sigma}: \mb{P}^d \to
\mb{P}^d$.  Since the automorphism group $\operatorname{PGL}(d+1,k)$
of $\mb{P}^d$ is an algebraic group, the result follows from part
(1).

(3)  Again we replace $\sigma$ by $\sigma^n$.  In this case, for
some multiple $mD$, the rational map $\phi: X \dra \mb{P}^d$
associated to the complete linear series $|mD|$ has the property
$\dim Y = \dim X$, where $Y = \overline{\phi(X)}$. As in part (2),
there is a compatible automorphism $\wt{\sigma}: \mb{P}^d \to
\mb{P}^d$, so $(Y, \wt{\sigma} \vert_{Y})$ is ordinary by part (1).
Then $(X, \sigma)$ has good dense orbits by Lemma~\ref{dom-ord-lem}.
\end{proof}
\noindent We remark that an automorphism of a variety of general
type, which satisfies part (3) of the preceding lemma for the
canonical divisor, is ordinary for a more trivial reason: such a
variety has a finite automorphism group.

In general, it is not obvious how to check if an automorphism
$\sigma$ belongs to an algebraic group of automorphisms, so that
part (1) of the preceding proposition applies.  The following
general remarks help to clarify the issue.  Let $X$ be a projective
integral $k$-scheme.  Then $\Aut(X)$ has the natural structure of a
group scheme \cite[Proposition 2.3]{Han}, which is constructed as an
open subscheme of the Hilbert scheme $\operatorname{Hilb}(X \times
X)$. This Hilbert scheme is a countable disjoint union of projective
schemes, so $\Aut(X)$ is a countable disjoint union of finite type
schemes.  Then $\Aut^0(X)$, the irreducible component of $\Aut(X)$
containing the identity, is an algebraic group.  Clearly a sub-group
scheme of $\Aut(X)$ is an algebraic group if and only if it is
contained in finitely many components.

The techniques in the preceding proposition are enough to show that
a number of special kinds of automorphisms of varieties of arbitrary
dimension are ordinary.  We will not attempt to exhaust all of the
possibilities here, but we give in the next result some important
examples.  Note that we say that an automorphism of $X$ is
\emph{numerically trivial} if its induced action on $N^1(X)$ is the
identity map.
\begin{corollary} \label{alg-gp-cor}
Let $X$ be an integral scheme of finite type over $k$ with
$k$-automorphism $\sigma$.  Then $(X, \sigma)$ is ordinary in any of
the following cases:
\begin{enumerate}
\item $X$ is a complete simplicial toric variety.

\item $X$ is projective, $\Pic(X)$ is discrete and $\sigma^n$ is numerically
trivial for some $n \geq 1$.

\item $X$ is an abelian variety and $\sigma^n$ is numerically
trivial for some $n \geq 1$.

\item $X$ is a nonsingular
projective Fano variety.
\end{enumerate}
\end{corollary}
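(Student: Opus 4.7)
The plan is to apply Proposition~\ref{alg-gp-prop} in each of the four cases, using Lemma~\ref{components-lem}(2) whenever it is convenient to pass to a suitable power of $\sigma$.

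For (1), I would invoke the theorem of Cox (extending Demazure's result for the smooth case) that the automorphism group of a complete simplicial toric variety is a linear algebraic group. Thus $\sigma$ itself lies in an algebraic group acting on $X$, and Proposition~\ref{alg-gp-prop}(1) applies directly. Case (4) is equally quick: every automorphism of a smooth projective variety preserves the canonical class, so $\sigma^*(-K_X)\sim -K_X$; since $X$ is Fano, $-K_X$ is ample, and Proposition~\ref{alg-gp-prop}(2) applies with $D=-K_X$ and $n=1$.

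For (2), discreteness of $\Pic(X)$ means that $\Pic^0(X)$ is zero-dimensional, so that $\Pic(X)$ is a finitely generated abelian group and the natural map $\Pic(X)\to N^1(X)$ has finite (torsion) kernel. Since $(\sigma^n)^*$ is the identity on $N^1(X)$, the induced action of $(\sigma^n)^*$ on $\Pic(X)$ has finite order, so some further power $\sigma^{nm}$ satisfies $(\sigma^{nm})^* D\sim D$ for every ample $D$. Proposition~\ref{alg-gp-prop}(2) together with Lemma~\ref{components-lem}(2) then concludes this case.

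The main obstacle is (3). Writing $\sigma=T_a\circ \alpha$, with $T_a$ a translation and $\alpha$ a group automorphism fixing the origin, translations act trivially on $\NS(X)$, which equals $N^1(X)$ since the N\'eron--Severi group of an abelian variety is torsion-free. The numerical triviality of $\sigma^n$ thus forces $\alpha^n$ to act trivially on $\NS(X)$, so $\alpha^n$ preserves every ample class. The essential input is the classical finiteness theorem that the automorphism group of a polarized abelian variety is finite (ultimately because such automorphisms form the integral isometry group of a positive-definite Hermitian form and so lie in a compact group). This supplies an integer $N$ with $\alpha^N=\id$, so $\sigma^N=T_c$ is a pure translation, hence lies in the algebraic group $X$ acting on itself. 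Proposition~\ref{alg-gp-prop}(1) and Lemma~\ref{components-lem}(2) then complete the proof.
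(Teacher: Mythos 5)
Your proof is correct, and parts (1), (2) and (4) follow essentially the same route as the paper: Cox's theorem plus Proposition~\ref{alg-gp-prop}(1) for toric varieties, preservation of the anticanonical class plus Proposition~\ref{alg-gp-prop}(2) for Fano varieties, and, in case (2), the passage from numerical triviality to linear equivalence of some ample divisor after raising $\sigma$ to a further power. (In that case the paper goes through SGA6 --- numerical equivalence implies algebraic equivalence of a multiple --- and then uses triviality of $\Pic^0$, whereas you use finite generation of $\Pic(X)$ together with finiteness of the kernel of $\Pic(X)\to N^1(X)$; these are two phrasings of the same content, and your appeal to Proposition~\ref{alg-gp-prop}(2) is if anything cleaner than the paper's citation of part (3) there.) Where you genuinely diverge is case (3). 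The paper follows Dolgachev: using surjectivity of $\phi_L\colon X\to\Pic^0(X)$ for an ample $L$, it finds a translation $T_x$ with $T_{-x}^*\sigma^*L\cong L$, and then argues that $\sigma$ lies in a finite union of components of $\Aut(X)$ containing the algebraic stabilizer of $L$ and the translations, so that Proposition~\ref{alg-gp-prop}(1) applies to $\sigma$ itself. You instead decompose $\sigma=T_a\circ\alpha$, observe that translations act trivially on $\NS(X)=N^1(X)$ so that $\alpha^n$ preserves a polarization, and invoke the classical finiteness of the automorphism group of a polarized abelian variety (via the Rosati involution) to conclude that some power of $\sigma$ is a pure translation; Proposition~\ref{alg-gp-prop}(1) applied to the algebraic group $X$ acting on itself, together with Lemma~\ref{components-lem}(2), finishes. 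Your version buys a stronger structural conclusion (a power of $\sigma$ is a translation) at the cost of importing the finiteness theorem for $\Aut(X,\lambda)$, while the paper's version stays within the formalism of algebraic subgroups of $\Aut(X)$ and works directly with $\sigma$ rather than a power. Both are complete proofs.
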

\begin{proof}
(1) By \cite[Corollary~4.7]{Cox}, the automorphism group of a
complete simplicial toric variety is an algebraic group.  Now apply
Theorem~\ref{alg-gp-prop}(1).

(2) Let $D$ be an any ample divisor on $X$.  We have that
$(\sigma^n)^*D \equiv D$, where $\equiv$ means numerical
equivalence; this implies that $(\sigma^n)^*(mD)$ is algebraically
equivalent to $mD$ for some $m \geq 1$ \cite[XIII,
Theorem~4.6]{SGA6}.
 Since $\Pic(X)$ is discrete, the Picard
variety $\Pic^0(X) \subseteq \Pic(X)$ of divisor classes
algebraically equivalent to $0$ is a point, so $(\sigma^n)^*(mD)
\sim mD$, and by Proposition~\ref{alg-gp-prop}(3) we obtain that
$(X, \sigma^n)$ is ordinary.  Thus $(X, \sigma)$ is also.

(3) We may assume that $\sigma$ is itself numerically trivial. We
verify that the same proof as in \cite[Proposition~1]{Do} shows that
if $\sigma$ is a numerically trivial automorphism on an abelian
variety $X$, then $\sigma$ is contained in an algebraic group of
automorphisms. For $x \in X$, let $T_x: X \to X$ be the translation
automorphism $y \mapsto y + x$. Let $L$ be a line bundle on $X$, and
let $\phi_L: X \to \Pic(X)$ be the morphism defined by $\phi_L(x) =
T^*_x(L) \otimes L^{-1}$.  As is shown in \cite[II.8]{M}, the image
of any $\phi_L$ is contained in $\Pic^0(X)$.

Now let $L$ be a very ample line bundle on $X$.  Since $\sigma$ is
numerically trivial, we have $\sigma^*L \otimes L^{-1} \in
\Pic^0(X)$, because numerical equivalence and algebraic equivalence
are the same on an Abelian variety \cite[Corollary, p.135]{La1}.
 In addition, \cite[II.8, Theorem~1]{M} shows that for an ample line bundle
$L$, the map $\phi_L: X \to \Pic^0(X)$ is surjective. Thus there is
some $x$ such that
\[ \sigma^*L \cong T^*_{x} L.\]
That is, $T_{-x}^* \sigma^* $ leaves the isomorphism class of $L$
invariant.   But the group of automorphisms of $X$ leaving $L$
invariant is an algebraic group $G \subseteq \Aut(X)$, by the proof
of Theorem~\ref{alg-gp-prop}(2); thus $\sigma$ lies in the right
coset $G T_{x}$.  The group of translation automorphisms of $X$ is
isomorphic to the irreducible algebraic group $X$, so it must be
contained in $\Aut^0(X)$.  Since $G$ is an algebraic group, it is
contained in some finite set of irreducible components of $\Aut(X)$;
the union of these components must be an algebraic group containing
$\sigma$.   Now apply Theorem~\ref{alg-gp-prop}(1).

(4) In this case, by definition the anti-canonical divisor $-K_X$ is
ample, so the result follows from Theorem~\ref{alg-gp-prop}(2).
\end{proof}

Now we turn from techniques that work for varieties of any dimension
and give our main result on the case of surfaces.  We note that the
proof of the next theorem depends in an essential way on a result
about the automorphism groups of surfaces, \cite[Proposition~1]{Do},
which is proved case-by-case by appealing to the classification of
projective surfaces.  We also use the classification of the possible
growth data of an automorphism with finite growth type from
\cite{DF}, as we reviewed in Theorem~\ref{DF-sum-thm}. Thus there is
a lot of highly non-trivial geometry specific to surfaces supporting
this result.
\begin{theorem}
\label{dim2-ord-thm} Let $k$ be an uncountable algebraically closed
field of characteristic $0$.   Let $\sigma: X \to X$ be an
automorphism of an integral quasi-projective $k$-scheme $X$ with
$\dim X = 2$. If $(X, \sigma)$ has finite growth type, then $(X,
\sigma)$ is ordinary.
\end{theorem}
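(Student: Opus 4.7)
The plan is to show that $(X, \sigma)$ has good dense orbits, which combined with Lemma~\ref{curves-lem}(1) suffices for ordinariness: every proper $\sigma$-irreducible closed subset of the integral surface $X$ has dimension $\leq 1$ and is thus already ordinary. First I would apply a Lefschetz-style reduction to $k = \BbC$. Both $X$ and $\sigma$ descend to data $(X_F, \sigma_F)$ over a finitely generated subfield $F \subseteq k$ (chosen large enough that $X_F$ is geometrically integral); choosing any embedding $F \hookrightarrow \BbC$ yields $(X_\BbC, \sigma_\BbC)$. Picking an algebraically closed uncountable field $\Omega$ containing both $k$ and $\BbC$, two applications each of Lemma~\ref{gdo-basechange-lem} and Lemma~\ref{fgt-ext-lem} show that both good dense orbits and finite growth type simultaneously transfer between $(X, \sigma)$ and $(X_\BbC, \sigma_\BbC)$, so one may assume $k = \BbC$.

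Next I would pass to a nonsingular projective surface $\tilde X$ birational to $X$; then $\sigma$ extends to a birational self-map $\tilde\sigma: \tilde X \dra \tilde X$, and since finite growth type depends only on $k(X) = k(\tilde X)$ with its $\sigma$-action, $(\tilde X, \tilde\sigma)$ still has finite growth type. Apply Theorem~\ref{DF-sum-thm} to conjugate $(\tilde X, \tilde\sigma)$ birationally to $(Y, \tau)$ in one of three cases indexed by $j \in \{0,1,2\}$. In the case $j = 0$, $\tau$ is an automorphism acting trivially on $N^1(Y)$ after some power; by a classical result on surface automorphisms (cf.\ \cite[Proposition~1]{Do}), a further power of $\tau$ lies in the algebraic group $\Aut^0(Y)$, so $\tau$ itself lies in an algebraic group of automorphisms of $Y$. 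Proposition~\ref{alg-gp-prop}(1) then yields that $(Y, \tau)$ is ordinary, and Lemma~\ref{dom-ord-lem} (applied to the dominant rational map $X \dra Y$ obtained by restricting the birational conjugation, which intertwines the two automorphisms $\sigma$ and $\tau$) transfers good dense orbits back to $(X, \sigma)$.

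When $j = 1$ or $j = 2$, $\tau$ instead preserves a rational or elliptic fibration $f: Y \to C$, inducing a map $\tau': C \dra C$ on the base. The strategy is to produce, for some $n \geq 1$, a non-constant $\tau^n$-invariant rational function $\phi \in k(Y) = k(X)$; such a $\phi$ is then $\sigma^n$-invariant, and Theorem~\ref{unc-irr-thm} forces $(X, \sigma)$ to have no dense orbit, making good dense orbits trivial. When $\tau'$ has finite order on $C$, $\phi$ is just the pullback via $f$ of any non-constant function on $C$. The main obstacle is the case when $\tau'$ has infinite order, which forces $C$ to be rational or elliptic with $\tau'$ lying in the algebraic group of M\"obius transformations or translations; here one must invoke finer structural results on the fibration---the Mordell-Weil group of sections in the elliptic case, and the Jonqui\`eres structure in the rational case---to either produce a $\tau^n$-invariant function directly, or else embed $\tau$ into a larger algebraic group of automorphisms of $Y$ and conclude via Proposition~\ref{alg-gp-prop}(1) together with Lemma~\ref{dom-ord-lem}. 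This case analysis for non-torsion base actions is where the deepest surface-theoretic input is required.
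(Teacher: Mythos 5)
Your overall skeleton --- reducing ordinariness to good dense orbits via Lemma~\ref{curves-lem}, the Lefschetz descent to $\mathbb{C}$ via Lemmas~\ref{gdo-basechange-lem} and \ref{fgt-ext-lem}, the Diller--Favre trichotomy of Theorem~\ref{DF-sum-thm}, and the algebraic-group argument via \cite[Proposition~1]{Do} and Proposition~\ref{alg-gp-prop}(1) when $j=0$ --- is exactly the paper's. The genuine gap is in your treatment of $j=1,2$ when the induced map on the base curve has infinite order. The dichotomy you propose there (either produce a non-constant $\tau^n$-invariant rational function, or embed $\tau$ in an algebraic group of automorphisms of $Y$) cannot be realized in general. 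First, by Theorem~\ref{unc-irr-thm} a non-constant $\sigma^n$-invariant rational function exists precisely when there is \emph{no} dense orbit, and automorphisms of finite growth type preserving an elliptic or rational fibration can perfectly well have dense orbits; for those no such function exists, by any method. Second, if $\tau$ lay in an algebraic group $G\subseteq \Aut(Y)$ with $Y$ nonsingular projective, then some power of $\tau$ would lie in the connected group $\Aut^0(Y)$ and hence act trivially on the discrete group $N^1(Y)$, forcing $j=0$; so the algebraic-group escape route is unavailable for $j\in\{1,2\}$. (In case $j=1$ there is the further point that the Diller--Favre conjugate $\tau$ is in general only a birational self-map, not an automorphism, so ``embedding $\tau$ in an algebraic group of automorphisms'' is not even well posed without more care.)

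The step that actually closes these cases requires constructing nothing: assume $(Y,\tau)$ has a dense orbit, since otherwise good dense orbits holds vacuously. Then the induced automorphism $\theta$ of the base curve $C$ has infinite order and hence only finitely many periodic points, so every $\tau$-periodic point of $Y$ lies in the finitely many fibres over them; and by Theorem~\ref{inv-divisor-thm} (applicable because a dense orbit rules out infinitely many invariant divisors) there are only finitely many $\tau$-periodic curves in $Y$. The union $W$ of these curves is a proper closed $\tau$-invariant set containing every point whose orbit is not dense, so the dense-orbit locus is the open set $Y\setminus W$. In case $j=1$ one first restricts to the $\sigma$-invariant open subset of $X$ on which the composite rational map to $C$ is defined, so that one is working with an honest automorphism and an honest morphism to $C$ before running this argument.
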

\begin{proof}
By Lemma~\ref{curves-lem}(2), we need only prove that $(X, \sigma)$
has good dense orbits.  Moreover, we can reduce to the case $k =
\mb{C}$, by the following standard application of the Lefschetz
principle.
 There is some subfield $E \subseteq k$ which is a
finitely generated field extension of $\mb{Q}$ and such that $(X,
\sigma)$ is defined over $E$.  We can find an algebraically closed
uncountable field $F$ with $E \subseteq F \subseteq k$ such that $F$
is also isomorphic to a subfield of $\mb{C}$.  Then $(X, \sigma)$ is
also defined over $F$, in other words there is an $F$-scheme $Y$
with $\sigma':Y \to Y$ such that $X = Y \times_{\spec F} \spec k$
and $\sigma'$ induces $\sigma$.  We also have an extended scheme
$\wt{X} = Y \times_{\spec F} \spec \mb{C}$ with induced automorphism
$\wt{\sigma}$.  Since $F$ is algebraically closed, $\wt{X}$ is still
integral by Lemma~\ref{ac-ext-lem}.  Now by
Lemma~\ref{gdo-basechange-lem}, $(X, \sigma)$ has good dense orbits
if and only if $(\wt{X}, \wt{\sigma})$ does, and by
Lemma~\ref{fgt-ext-lem}, $(\wt{X}, \wt{\sigma})$ still has finite
growth type.  Thus, replacing $X$ by $\wt{X}$, from now on we can
assume that $k = \mb{C}$.

By assumption, in the growth data $(\rho, j)$ of $\sigma: k(X) \to
k(X)$ we have $\rho = 1$.  We use the classification in
Theorem~\ref{DF-sum-thm}, which shows in particular that we only
need to consider the cases $j = 0,1,2$.  Using
Lemma~\ref{dom-ord-lem}, if convenient we can replace $\sigma: X \to
X$ with a conjugate automorphism $\tau: Y \to Y$ and it is enough to
prove that $(Y, \tau)$ has good dense orbits.

In case $j = 0$, using Theorem~\ref{DF-sum-thm} we change to a
conjugate automorphism $\tau: Y \to Y$ where $Y$ is nonsingular
projective and some power of $\tau$ is numerically trivial. Without
loss of generality, we may assume that $\tau$ is numerically
trivial.

 Let $\operatorname{O}(\operatorname{N}^1(Y))$
be the group of linear transformations on the lattice
$\operatorname{N}^1(Y)$ that preserve the intersection form.
Considering the action of automorphisms on $\operatorname{N}^1(Y)$
by pushforward, we have group a homomorphism
\[ r:  \Aut(Y) \to \operatorname{O}(\operatorname{N}^1(Y))\]
which induces (since $\Aut^0(Y)$ is irreducible and
$\operatorname{O}(\operatorname{N}^1(Y))$ is discrete) a
homomorphism
\[ \bbar{r}: \Aut(Y)/\Aut^0(Y) \to \operatorname{O}(\operatorname{N}^1(Y)). \]
In the case at hand, $\tau \in \Ker(r)$. By
\cite[Proposition~1]{Do}, $\Ker(\bbar{r})$ is finite; therefore,
$\tau$ is an element of a finite extension of the algebraic group
$\Aut^0(Y)$, namely $\Ker(r)$, which is an algebraic group. Then
$(Y, \tau)$ has good dense orbits by
Proposition~\ref{alg-gp-prop}(1).

The arguments for the remaining cases $j = 1$, $j = 2$ are similar
to each other.  Using Theorem~\ref{DF-sum-thm}, if $j = 1$, then
there is a birational map $g: X \dra Z$ for some nonsingular
projective surface $Z$ such that passing to the conjugate $\tau: Z
\dra Z$ of $\sigma$, there is a $\tau$-invariant rational fibration
$f: Z \to C$ for some  curve $C$. Consider the rational map $h = fg:
X \dra C$. The locus where $h$ is defined is a $\sigma$-invariant
open subset $Y \subseteq X$, and so we can consider the morphism
$h': Y \to C$; then it is enough to prove that $(Y, \sigma \vert_Y)$
has good dense orbits. In case $j = 2$, Theorem~\ref{DF-sum-thm}
immediately gives us a map $\tau: Y \to Y$ conjugate to $\sigma: X
\to X$, where $Y$ is nonsingular projective, and such that there is
a $\tau$-invariant elliptic fibration $f: Y \to C$ for some curve
$C$.

Thus to finish both cases, it is enough to consider an integral
quasi-projective surface $Y$ with automorphism $\tau: Y \to Y$,
together with a $\tau$-invariant fibration $f: Y \to C$ for some
curve $C$, and prove that $(Y, \tau)$ has good dense orbits. By
assumption, there is a compatible automorphism $\theta: C \to C$
with $f \tau = \theta f$.  If $(Y, \tau)$ has no dense orbit, we are
done, so assume this is not the case.  Then $\theta$ must have
infinite order, so $\theta$ has finitely many periodic points on $C$
by Lemma~\ref{curves-lem}.  The union of the fibers in $Y$ over
those points is a proper $\tau$-invariant closed subset of $Y$ which
must contain all of the $\tau$-periodic points of $Y$. Again since
we assume $(Y, \tau)$ has a dense orbit, by
Theorem~\ref{inv-divisor-thm} there are finitely many
$\tau$-periodic curves in $Y$.  Letting $W$ be the union of these
curves, we also know that $W$ contains all of the $\tau$-periodic
points of $Y$, so we conclude that $U = Y \setminus W$ is the set of
points lying on dense $\tau$-orbits, and $U$ is open as required.
\end{proof}
\begin{remark}
The restriction $\cha k = 0$ in Theorem~\ref{dim2-ord-thm} is
probably unnecessary, but removing it would likely take significant
work.  To do so, one would need to verify that
\cite[Proposition~1]{Do} and Theorem~\ref{DF-sum-thm} both work over
algebraically closed fields of positive characteristic.  (In
\cite[Theorem 3.2]{Ro}, most of Theorem~\ref{DF-sum-thm} is shown to
hold over any algebraically closed field, but not the conclusion
that there is a conjugate surface with a $\tau$-invariant elliptic
fibration in case $j = 2$.)
\end{remark}

We conjecture that the converse to the preceding theorem also holds:
namely, that on a quasi-projective integral surface $X$, an
automorphism $\sigma$ with infinite growth type never has good dense
orbits.  It is not too hard to show that such an automorphism cannot
have a $\sigma$-invariant rational function, and thus must have
finitely many $\sigma$-periodic curves by
Theorem~\ref{inv-divisor-thm}. Thus the failure of good dense orbits
is equivalent in this case with having a countably infinite number
of periodic points which do not lie on periodic curves.  The result
\cite[Theorem 0.6]{DF} does show that if $X$ is projective, $\sigma$
has infinite growth type \emph{and no curves of periodic points},
then $\sigma$ has a countably infinite number of periodic points. We
see no obvious argument that works in general, though, and so we do
not pursue this question further here. For the special case of $X =
\mb{A}^2$, however, see Theorem~\ref{A2-case-thm} below.

\section{Summary theorems and examples}
\label{summary-sec}

We assume Hypothesis~\ref{k-hyp2} throughout this section.   We are
now ready to prove Theorem~\ref{main-thm} from the Introduction,
which shows that the Dixmier-Moeglin equivalence often does hold for
the examples in Section~\ref{examples-sec}. At this point, the proof
is simply a matter of piecing together results we have already
proved.   We then give a number of illustrative examples of skew and
skew-Laurent rings which do not satisfy the DM-equivalence. These
examples are well-known, but it is interesting to see how they fit
into our geometric framework.

\begin{theorem}
\label{main-DM-thm} Let $k$ be an algebraically closed uncountable
field, and let $A$ be one of the following $k$-algebras:
\begin{enumerate}
\item A twisted homogeneous coordinate ring $A = B = B(X, \mc{L}, \sigma)$
as in Example~\ref{thcr-ex}, so $X$ is projective with automorphism
$\sigma: X \to X$ and $\mc{L}$ is $\sigma$-ample; or
\item either a skew polynomial ring $A =U = S[t; \sigma]$, or skew-Laurent ring $A =T = S[t, t^{-1}; \sigma]$ as in
Example~\ref{skew-ex}, so $S$ is a finitely generated commutative
$k$-algebra.  In this case, we set $X = \spec S$ and write $\sigma:
X \to X$ for the induced scheme automorphism.
\end{enumerate}
In all cases, if $(X, \sigma)$ is ordinary then $A$ satisfies the
DM-equivalence.  In particular, this always holds when $\cha k = 0$,
$\dim X \leq 2$, and $\GK A < \infty$ (the last condition being
automatic in case (1)).
\end{theorem}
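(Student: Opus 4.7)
The plan is to assemble the theorem from pieces already proved in the paper. For the first assertion, suppose $(X,\sigma)$ is ordinary. By Corollary~\ref{gdo-ord-cor}, $(X,\sigma)$ is also \ord\ in the sense of Definition~\ref{geom-ord-def}, and then Proposition~\ref{geom-primes-prop}, applied case-by-case to $B$, $U$, or $T$, directly delivers the Dixmier-Moeglin equivalence for $A$. So the first half is essentially bookkeeping that links the two flavors of ``ordinary.''

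For the ``in particular'' clause I must verify that under the hypotheses $\cha k = 0$, $\dim X \leq 2$, and $\GK A < \infty$, the pair $(X,\sigma)$ is ordinary. Fix any $\sigma$-irreducible closed $Z \subseteq X$; I need $(Z, \sigma\vert_Z)$ to have good dense orbits. Let $Z_0$ be an integral component of $Z$, and pick the minimal $m \geq 1$ with $\sigma^m(Z_0) = Z_0$. Since $\sigma$ cycles the components of $Z$, it is a direct check that good dense orbits for $(Z, \sigma\vert_Z)$ is equivalent to good dense orbits for $(Z_0, \sigma^m\vert_{Z_0})$. If $\dim Z_0 \leq 1$, Lemma~\ref{curves-lem}(1) finishes the job. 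If $\dim Z_0 = 2$, I invoke Theorem~\ref{dim2-ord-thm}, which requires $\cha k = 0$ and finite growth type of $(Z_0, \sigma^m\vert_{Z_0})$; the characteristic hypothesis is in hand, so the remaining question is finite growth type.

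This is the key technical step. By Proposition~\ref{growth-prop}, finite growth type for $(k(Z_0), \sigma^m)$ follows from the existence of a $\mb{Z}$-graded big subalgebra of $k(Z_0)[t, t^{-1}; \sigma^m]$ of finite GK-dimension. In case~(1), $\sigma$-ampleness of $\mc{L}$ forces $\GK B < \infty$, and the restriction-of-sections map produces a homomorphism $B \to B(Z, \mc{L}\vert_Z, \sigma\vert_Z)$ that is surjective in large degree onto a subring whose graded quotient is $R[t, t^{-1}; \sigma]$ with $R = \prod_i k(\sigma^i Z_0)$; passing to the $m$-th Veronese and projecting $R \twoheadrightarrow k(Z_0)$ then yields the required big subalgebra of finite GK. In case~(2), the analogous construction begins with the quotient $A/P$, where $P$ is the homogeneous prime of $A$ corresponding to $Z$ via Lemma~\ref{skew-primes-lem}; since $\GK(A/P) \leq \GK A < \infty$, the Veronese-and-project step again delivers a big subalgebra of finite GK. In case~(1) this also records that the assumption $\GK A < \infty$ is automatic, matching the parenthetical in the statement.

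The hard part will be the bookkeeping in this last paragraph: moving between the possibly reducible $\sigma$-irreducible $Z$ and its integral component $Z_0$ with the cycle-length power $\sigma^m$, while ensuring that the Veronese-and-project construction genuinely lands as a \emph{big} subalgebra of $k(Z_0)[t, t^{-1}; \sigma^m]$. Once that is in place, Theorem~\ref{dim2-ord-thm} (the deep Diller--Favre-based surface result of Section~\ref{ord-auto-sec}) does the heavy lifting in dimension~$2$, and the rest is formal.
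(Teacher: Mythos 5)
Your proposal is correct and follows essentially the same route as the paper: Corollary~\ref{gdo-ord-cor} plus Proposition~\ref{geom-primes-prop} for the first assertion, then a reduction to integral pieces where finite growth type is verified via Proposition~\ref{growth-prop} so that Lemma~\ref{curves-lem} and Theorem~\ref{dim2-ord-thm} apply. The only notable difference is that the paper avoids your ``Veronese-and-project'' bookkeeping by working with the irreducible components $X_{\alpha}$ of $X$ and a single power $\sigma^n$ fixing them all, producing the finite-GK witness directly as $B(X_{\alpha}, \mc{L}_n\vert_{X_{\alpha}}, \sigma^n)$ (using that $\mc{L}_n\vert_{X_{\alpha}}$ is $\sigma^n$-ample) in case (1), and as the factor $S_{\alpha}[t,t^{-1};\sigma^n]$ of the $n$th Veronese in case (2), so the bigness issue you flag never arises.
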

\begin{proof}
If $(X, \sigma)$ is ordinary, then $(X, \sigma)$ is \ord\ by
Corollary~\ref{gdo-ord-cor}.  Then $A$ satisfies the DM-equivalence
by Proposition~\ref{geom-primes-prop}.

Suppose now that $\GK A < \infty$.  This is always true in case (1),
since $\mc{L}$ is $\sigma$-ample, by \cite[Theorem 1.2]{Ke1}. Choose
an $n \geq 1$ such that $\sigma^n$ restricts to an automorphism of
each irreducible component $X_{\alpha}$ of $X$, and use the same
name $\sigma^n$ for the restriction $\sigma^n \vert_{X_{\alpha}}$.
In case (1), $\mc{L}_n$ is $\sigma^n$ ample on $X$, and so $\mc{L}_n
\vert_{X_{\alpha}}$ is $\sigma^n$-ample on $X_{\alpha}$.  Thus the
ring $A' = B(X_{\alpha}, \mc{L}_n \vert_{X_{\alpha}}, \sigma^n)$
also has $\GK A' < \infty$. In case (2), putting $X_{\alpha} = \spec
S_{\alpha}$, then $A' = S_{\alpha}[t, t^{-1}; \sigma^n]$ is a factor
algebra of the $n$th Veronese ring $A^{(n)} = S[t, t^{-1};
\sigma^n]$ of $A$, so $\GK A' < \infty$ in this case also.   Since
in either case $A'$ is a noetherian domain of finite GK-dimension,
with graded quotient ring $k(X_{\alpha})[t, t^{-1}; \sigma^n]$, by
Proposition~\ref{growth-prop} $(k(X_{\alpha}), \sigma^n)$ has finite
growth type for each $\alpha$. Now if we know in addition that $\cha
k = 0$ and $\dim X \leq 2$, then $(X_{\alpha}, \sigma^n)$ is
ordinary for each $\alpha$ by Lemma~\ref{curves-lem} (in case $\dim
X_{\alpha} \leq 1$) or Theorem~\ref{dim2-ord-thm} (in case $\dim
X_{\alpha} = 2$). Then $(X, \sigma)$ is ordinary by
Lemma~\ref{components-lem}.
\end{proof}

\begin{remark}
Suppose that $A$ is a prime twisted homogeneous coordinate ring as
in case (1) of Theorem~\ref{main-DM-thm}, for which $(X, \sigma)$ is
indeed ordinary.  Suppose that $A$ is primitive.  Then it is also
clear how to construct a faithful simple module for $A$.  Since
$\spec A$ is \ord\ by Proposition~\ref{geom-primes-prop}, it must be
that $A$ has finitely many height one primes, say $P_1, P_2, \dots
P_n$, and these are necessarily homogeneous by the proof of
Proposition~\ref{hord-DM-prop}. Choose any $0 \neq x \in \bigcap
P_i$ which is homogeneous of positive degree, and let $M$ be any
maximal right ideal of $A$ containing $x-1$ (note that $x-1$ is not
a unit, as $A$ is $\mb{N}$-graded.) Then $A/M$ is a faithful simple
right $A$-module.
\end{remark}

We note next that our techniques allow us to give some partial
results on the question of the DM-equivalence for twisted homogenous
coordinate rings of higher-dimensional varieties.
\begin{proposition}\label{cor-threefold}
Assume that $\cha k = 0$, and let $X$ be an integral projective
three-fold with automorphism $\sigma$ and $\sigma$-ample line bundle
$\Lsh$. If $(X, \sigma)$ has good dense orbits, then $(X, \sigma)$
is ordinary and $B = B(X, \Lsh, \sigma)$ satisfies the
Dixmier-Moeglin equivalence.
\end{proposition}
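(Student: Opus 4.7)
The plan is to run everything through Theorem~\ref{main-DM-thm}, which yields the Dixmier-Moeglin equivalence as soon as $(X,\sigma)$ is shown to be ordinary. Ordinariness demands that for every $\sigma$-irreducible closed subset $Z \subseteq X$, the pair $(Z,\sigma|_Z)$ have good dense orbits. Since $X$ is an integral threefold, the only $\sigma$-irreducible subset with $\dim Z = 3$ is $X$ itself, which is covered by hypothesis, so only proper $Z$ with $\dim Z \le 2$ require attention.

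For $\dim Z \le 1$ the conclusion is immediate from Lemma~\ref{curves-lem}(1). The substantive case is $\dim Z = 2$. Here I would pick $n \ge 1$ so that $\sigma^n$ fixes each irreducible component $Z_\alpha$ of $Z$ and argue component-by-component. The essential geometric input, noted just after Lemma~\ref{thcr-primes-lem}, is that $\sigma$-ampleness restricts to $\sigma$-invariant subschemes; combined with the fact that $\mc{L}_n$ is $\sigma^n$-ample on $X$ whenever $\mc{L}$ is $\sigma$-ample, this gives that $\mc{L}_n|_{Z_\alpha}$ is $\sigma^n|_{Z_\alpha}$-ample on the integral projective surface $Z_\alpha$.

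Consequently $B(Z_\alpha, \mc{L}_n|_{Z_\alpha}, \sigma^n|_{Z_\alpha})$ is a big $\mb{Z}$-graded subalgebra of finite GK-dimension inside $k(Z_\alpha)[t,t^{-1};\sigma^n]$, so Proposition~\ref{growth-prop} forces $(k(Z_\alpha),\sigma^n|_{Z_\alpha})$ to have finite growth type. The decisive step is then Theorem~\ref{dim2-ord-thm}, which promotes finite growth type on each surface $Z_\alpha$ to ordinariness. Reassembling the components via Lemma~\ref{components-lem}(3) yields $(Z,\sigma|_Z)$ ordinary, and in particular it has good dense orbits, completing the verification that $(X,\sigma)$ is ordinary.

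I do not foresee any substantial obstacle, since the argument is a straightforward dimensional reduction that funnels every nontrivial case into the surface theorem. The only care required is in replacing $\sigma$ by an appropriate power to fix the components of $Z$, and in tracking preservation of $\sigma$-ampleness under restriction and Veronese twist---both standard facts about $\sigma$-ample sheaves.
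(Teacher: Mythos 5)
Your proposal is correct and follows essentially the same route as the paper: the paper's proof simply notes that $\mc{L}\vert_Z$ is $\sigma\vert_Z$-ample for each proper $\sigma$-irreducible $Z$ (so $\dim Z \leq 2$) and cites the proof of Theorem~\ref{main-DM-thm} to conclude $(Z,\sigma\vert_Z)$ has good dense orbits, which is exactly the chain of reductions (power of $\sigma$, restriction of $\sigma$-ampleness, Proposition~\ref{growth-prop}, Theorem~\ref{dim2-ord-thm}, Lemma~\ref{components-lem}) you spell out. Your version is just a more explicit unpacking of the same argument.
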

\begin{proof}
It is enough to show that for any proper $\sigma$-irreducible subset
$Z$ of $X$, $(Z, \sigma \vert_Z)$ has good dense orbits, and then
apply Corollary~\ref{gdo-ord-cor} and
Proposition~\ref{geom-primes-prop}.  Since $\mc{L} \vert_Z$ is
$\sigma \vert_Z$-ample and $\dim Z \leq 2$, the proof of
Theorem~\ref{main-DM-thm} shows that $(Z, \sigma \vert_Z)$ has good
dense orbits as required.
\end{proof}

If $\sigma: X \to X$ is an automorphism of a projective scheme $X$
of arbitrary dimension, then $\sigma$ is called
\emph{quasi-unipotent} if the induced action $\sigma^*: N^1(X) \to
N^1(X)$ has eigenvalues which are roots of unity; this is the
correct analog of $\sigma$ having finite growth type, as defined in
Section~\ref{growth-type-sec} for a surface, if $X$ is projective of
higher dimension.  Keeler proved that an automorphism $\sigma$ is
quasi-unipotent if and only if $X$ has a $\sigma$-ample invertible
sheaf $\mc{L}$ \cite[Theorem 1.2]{Ke1}.  If this holds, then given
any $\sigma$-invariant subscheme $Z \subseteq X$, since $\mc{L}
\vert_Z$ is $\sigma \vert_Z$-ample, it follows that $\sigma \vert_Z$
is also quasi-unipotent.

\begin{proposition}\label{thcr-DM-prop}
 $(X, \sigma)$ is
ordinary for all projective $k$-schemes $X$ and quasi-unipotent
automorphisms $\sigma$, unless there exists such an $(X, \sigma)$
with the following additional properties:  $X$ is integral, $\sigma$
has a dense orbit, and $X$ has precisely a countably infinite number
of maximal $\sigma$-irreducible closed subsets, all but finitely
many of which have codimension $\geq 2$ in $X$.
\end{proposition}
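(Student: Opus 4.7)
My plan is a minimal-dimension argument: assume some projective $k$-scheme $X$ with quasi-unipotent automorphism $\sigma$ is not ordinary, and from $(X,\sigma)$ manufacture a pair with exactly the listed properties. The work splits into a geometric reduction plus an assembly of three theorems already proved.

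First I would reduce to the case $X$ integral. Given any non-ordinary projective quasi-unipotent pair, Lemma~\ref{components-lem}(1) lets me pass to $X_{red}$, and then, choosing $n$ with $\sigma^n(X_i) = X_i$ for every irreducible component $X_i$, part~(3) of that lemma produces some component on which $(X_i,\sigma^n\vert_{X_i})$ is still non-ordinary. The scheme $X_i$ is projective and integral, and by the remark immediately preceding the proposition $\sigma^n\vert_{X_i}$ is still quasi-unipotent (a $\sigma^n$-ample sheaf restricts to a $\sigma^n\vert_{X_i}$-ample one). So we may replace the original pair by $(X_i,\sigma^n\vert_{X_i})$.

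Next I would choose, among all integral projective quasi-unipotent counterexamples, one of minimal dimension. Because $X$ is integral, every proper $\sigma$-irreducible closed subset $Z\subsetneq X$ satisfies $\dim Z<\dim X$; if $(Z,\sigma\vert_Z)$ were non-ordinary, re-running the Step~1 reduction inside $Z$ would produce an integral projective quasi-unipotent counterexample of strictly smaller dimension, contradicting minimality. Thus every proper $\sigma$-irreducible closed subset of $X$ is already ordinary, which in turn forces the non-ordinariness of $(X,\sigma)$ to come from $(X,\sigma)$ itself failing to have good dense orbits. Now I would assemble the geometric inputs. Since $X$ is integral, hence $\sigma$-irreducible, the reduction performed inside the proof of Corollary~\ref{gdo-ord-cor} shows that $(X,\sigma)$ fails to have good dense orbits exactly when the number of maximal $\sigma$-irreducible closed subsets of $X$ is neither finite nor uncountable; in our situation this number must therefore be countably infinite. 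Theorem~\ref{unc-irr-thm} then guarantees that $\sigma$ admits a dense orbit on $X$, for otherwise that number would be uncountable. Theorem~\ref{inv-divisor-thm} further guarantees that $X$ has only finitely many codimension-$1$ $\sigma$-irreducible closed subsets, since infinitely many would once more produce uncountably many maximal ones; consequently all but finitely many of the maximal $\sigma$-irreducible closed subsets of $X$ have codimension $\geq 2$, so $(X,\sigma)$ realizes the advertised configuration.

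The only spot that needs care is the minimality step: a proper $\sigma$-irreducible $Z\subsetneq X$ witnessing non-ordinariness need not be integral, so one must recycle the Step~1 reduction inside $Z$ before the induction hypothesis can be invoked. Beyond this bookkeeping the proposition is simply the clean packaging of Corollary~\ref{gdo-ord-cor} together with Theorems~\ref{unc-irr-thm} and~\ref{inv-divisor-thm}, with quasi-unipotence entering only to ensure that every $\sigma$-invariant closed subscheme remains in the class to which those theorems apply.
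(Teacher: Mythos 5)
Your argument is correct and follows essentially the same route as the paper: a minimal-counterexample reduction (the paper phrases it as noetherian induction plus passing to a power of $\sigma$ and an irreducible component) to an integral projective $X$ on which $(X,\sigma)$ itself fails to have good dense orbits, after which the listed properties follow from Theorem~\ref{unc-irr-thm} and Theorem~\ref{inv-divisor-thm}. The only cosmetic difference is that you organize the reduction as ``integral first, then minimal dimension'' rather than the paper's single noetherian induction, and your care about quasi-unipotence restricting to invariant subschemes matches the remark the paper makes just before the proposition.
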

\begin{proof}
Suppose that $(X, \sigma)$ is not ordinary, where $\sigma: X \to X$
is quasi-unipotent.  Since quasi-unipotence restricts to
$\sigma$-invariant subschemes as we observed above, by noetherian
induction we may assume that $X$ is $\sigma$-irreducible and that
$(X, \sigma)$ does not have good dense orbits.  Replacing $\sigma$
by a power and passing to an irreducible component, we may assume
that $X$ is integral.  Then the remaining claimed properties of
$\sigma$ are implied immediately by Theorem~\ref{unc-irr-thm} and
Theorem~\ref{inv-divisor-thm}.
\end{proof}

Since if $(X, \sigma)$ is ordinary, then $B(X, \mc{L}, \sigma)$
satisfies the DM-equivalence for any $\sigma$-ample $\mc{L}$
(Theorem~\ref{main-DM-thm}), the previous proposition shows that any
counterexamples to the DM-equivalence for twisted homogeneous
coordinate rings must come from automorphisms of a very restrictive
and probably nonexistent kind.  Thus we venture the following.
\begin{conjecture}
Given a projective $k$-scheme $X$ of any dimension and automorphism
$\sigma: X \to X$, with $\sigma$-ample sheaf $\mc{L}$, we conjecture
that $B(X, \mc{L}, \sigma)$ satisfies the DM-equivalence.
\end{conjecture}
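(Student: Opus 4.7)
The plan is to reduce the conjecture, via Proposition~\ref{thcr-DM-prop} together with Theorem~\ref{main-DM-thm}, to a purely geometric claim: if $X$ is an integral projective $k$-scheme and $\sigma:X\to X$ is quasi-unipotent, then $(X,\sigma)$ must be ordinary. Proposition~\ref{thcr-DM-prop} already localizes the only possible obstruction to an integral projective $(X,\sigma)$ on which $\sigma$ has a dense orbit and which carries a countably infinite collection of maximal $\sigma$-irreducible closed subsets, almost all of codimension $\geq 2$ in $X$. So the core task is to rule out the simultaneous occurrence of a dense $\sigma$-orbit and infinitely many $\sigma$-periodic subvarieties of codimension $\geq 2$.

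The first approach I would pursue is a higher-codimension analog of Theorem~\ref{inv-divisor-thm}. That proof descends $(X,\sigma)$ to a finitely generated field of definition $F$, invokes finite generation of $\cl Y$ (Lemma~\ref{pic-fg-lem}), and extracts a nontrivial linear relation among the classes $[Y_i]$ of infinitely many $\sigma$-periodic divisors to build a $\sigma$-invariant rational function. In higher codimension $c \geq 2$, one would replace $\cl Y$ by the numerical group $N^c(Y)$, which for $Y$ projective over a finitely generated field is still finitely generated. The difficulty is that there is no principal-divisor map in codimension $c \geq 2$: a numerically trivial $c$-cycle is not the divisor of a rational function. One would instead hope to use the quasi-unipotence of $\sigma^\ast$ on $N^c(Y)$ to turn a $\sigma$-stable integral combination of periodic codimension-$c$ cycles into either a $\sigma$-invariant fibration $X \dra Z$ with $\dim Z < \dim X$, or a $\sigma$-stable coherent sheaf whose sections define such a fibration. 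Either outcome contradicts the dense-orbit assumption via Lemma~\ref{dom-ord-lem}.

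In parallel, I would try a dynamical classification route mirroring Theorem~\ref{DF-sum-thm}. One stratifies projective $(X,\sigma)$ with $\sigma$ quasi-unipotent by the polynomial growth degree of $(\sigma^n)^\ast$ on $N^1(X)$. In the numerically trivial case one would ask whether the kernel of the representation $\bar r:\Aut(X)/\Aut^0(X)\to \operatorname{O}(N^1(X))$ is finite for arbitrary projective $X$, extending the surface result used in the proof of Theorem~\ref{dim2-ord-thm}; if so, Proposition~\ref{alg-gp-prop}(1) settles that case immediately. For higher polynomial growth one would seek $\sigma$-invariant fibrations with positive-dimensional base, in analogy with cases (2) and (3) of Theorem~\ref{DF-sum-thm}, and induct on $\dim X$, applying Theorem~\ref{main-DM-thm} (or the inductive hypothesis) to the lower-dimensional fibers and base after restricting $\sigma$-ampleness, as already noted in Section~\ref{examples-sec}.

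The hard part will be the absence of a Diller--Favre-style classification in dimension $\geq 3$. Higher-dimensional birational dynamics admits genuinely richer behavior: automorphisms of projective Calabi--Yau or hyperk\"ahler manifolds can exhibit dense orbits alongside rich collections of periodic subvarieties, and it is not at present even clear that the hypothetical bad configuration isolated by Proposition~\ref{thcr-DM-prop} is empty. The extension of the numerically-trivial-kernel finiteness result to all projective varieties appears to be open in general, and the higher-codimension descent argument above runs aground precisely at the step where one tries to manufacture a geometric invariant out of a purely numerical relation. For this reason the strategy outlined here should be regarded as a reduction of the conjecture to serious open questions in higher-dimensional algebraic dynamics, rather than as a self-contained proof.
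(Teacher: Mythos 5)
You have correctly recognized that this statement is a conjecture which the paper does not prove: the authors' only ``evidence'' for it is Proposition~\ref{thcr-DM-prop}, which (together with Theorem~\ref{main-DM-thm} and Corollary~\ref{gdo-ord-cor}) reduces any possible counterexample to an integral projective $(X,\sigma)$ with $\sigma$ quasi-unipotent, possessing a dense orbit and a countably infinite family of maximal $\sigma$-irreducible closed subsets, all but finitely many of codimension at least $2$. Your first paragraph reproduces exactly this reduction, so up to that point you are on the same ground as the paper. Everything after that is speculation beyond what the paper attempts, and you are appropriately explicit that neither route closes the gap. On the first route, you have put your finger on precisely the obstruction: Theorem~\ref{inv-divisor-thm} works because a numerically (in fact linearly) trivial relation among codimension-one cycles is witnessed by a rational function, and there is no analogous principal-cycle map in codimension $\geq 2$, so finite generation of a numerical group $N^c(Y)$ over a finitely generated field does not by itself produce a $\sigma$-invariant function or fibration. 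On the second route, the missing ingredients are a higher-dimensional analogue of the Diller--Favre classification (Theorem~\ref{DF-sum-thm}) and of the finiteness of the numerically trivial part of $\Aut(X)/\Aut^0(X)$ used in Theorem~\ref{dim2-ord-thm}; the paper itself flags its dependence on these surface-specific inputs. In short: your proposal is not a proof, but it is an accurate account of why the paper leaves this as a conjecture, and it adds two plausible (currently blocked) lines of attack that the paper does not discuss.
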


In the remainder of the paper, we discuss the case of skew and
skew-Laurent extensions further.  Of course, in case (2) of
Theorem~\ref{main-DM-thm}, $\GK A < \infty$ does not always hold,
and in this case one recovers some the standard examples which do
not satisfy the DM-equivalence. In fact, we suspect that the skew
and skew-Laurent extensions $T$ and $U$ never satisfy the
DM-equivalence in the case they have infinite GK-dimension; see
Theorem~\ref{A2-case-thm} below for some evidence for this
contention. We now review some interesting examples.

\begin{example}
\label{T-prim-char} Let $T = S[t, t^{-1}; \sigma]$ be as in
Theorem~\ref{main-DM-thm}(2), where we assume for simplicity that
$S$ is a domain.  Then the primitivity of the ring $T$ is
characterized by Jordan in \cite[Theorem 5.10, Proposition 2.9]{Jo}.
Namely, $T$ is primitive if and only if $(X, \sigma)$ has a dense
orbit.  In fact, Jordan's results allow non-finitely-generated
commutative algebras $S$; in the special case at hand, we can easily
see why this characterization of primitivity holds using our
previous results, as follows. If $p \in X$ lies on a dense orbit,
then letting $\mf{m}$ be the maximal ideal of $S$ corresponding to
the point $p$, it is easy to check that $\bigoplus_{n \in \mb{Z}}
(S/\mf{m})t^n$ is a faithful simple right $T$-module.  On the other
hand, if $(X, \sigma)$ has no dense orbit, then
Proposition~\ref{unc-irr-thm} implies that $X$ has a non-constant
$\sigma$-invariant rational function $f$, so $f$ is in the center of
$Q_{\rm gr}(T) = k(X)[t, t^{-1}; \sigma]$ and $(0)$ is not a
rational ideal.  Then $T$ is not primitive, as we saw in the proof
of Lemma~\ref{height-one-lem}.
\end{example}

\begin{example}
Consider the situation of Theorem~\ref{main-DM-thm}(2) with $S =
k[u,v, u^{-1}, v^{-1}]$, so $X \cong (\mb{A}^1 - \{0 \})^2$, and where
$\sigma: S \to S$ is defined by $\sigma(u) = u^2v, \sigma(v) = uv$.
This was the original counterexample given by Lorenz in
\cite[4.3]{Lo} which showed that $T = S[t, t^{-1}; \sigma]$ fails the Dixmier-Moeglin
equivalence, since the prime ideal $(0)$ is primitive but not locally
closed. It is easy to check directly that $T$ has exponential
growth.  Theorem~\ref{main-DM-thm} shows that the lack of finite
GK-dimension for $T$ is a crucial property of this example.
\end{example}

In the case $X = \mb{A}^2$, we can characterize completely which
automorphisms are ordinary, extending the analysis of
\cite[Proposition 7.8]{Jo}, which showed which automorphisms have a
dense orbit.  This gives a large number of examples that do not
satisfy the DM-equivalence.
\begin{theorem}
\label{A2-case-thm} Let $k$ be uncountable and algebraically closed
with  $\cha k = 0$, and consider the situation of
Theorem~\ref{main-DM-thm}(2) with $S =k[u,v]$ and $X = \spec S =
\mb{A}^2$, so $T = S[t, t^{-1}; \sigma]$. Then the following are
equivalent:
\begin{enumerate}
\item $\GK T < \infty$;
\item  $(\mb{A}^2, \sigma)$ is ordinary;
\item $T$ satisfies the DM-equivalence.
\end{enumerate}
\end{theorem}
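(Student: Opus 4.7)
The plan is to dispatch $(1)\Rightarrow(2)\Rightarrow(3)$ quickly from earlier results and then establish $(3)\Rightarrow(1)$ by contrapositive, exhibiting $(0)\in\spec T$ as primitive but not locally closed whenever $\GK T=\infty$.

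For $(1)\Rightarrow(2)$, note $T$ is a $\BbZ$-graded noetherian domain that is big in $Q_{\mathrm{gr}}(T)=k(u,v)[t,t^{-1};\sigma]$, so Proposition~\ref{growth-prop} converts $\GK T<\infty$ into finite growth type of $(k(\mathbb{A}^2),\sigma)$; since $\cha k=0$ and $\dim\mathbb{A}^2=2$, Theorem~\ref{dim2-ord-thm} then yields that $(\mathbb{A}^2,\sigma)$ is ordinary. The implication $(2)\Rightarrow(3)$ is precisely Theorem~\ref{main-DM-thm}(2) applied to $T$.

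For $(3)\Rightarrow(1)$ I argue contrapositively: assuming $\GK T=\infty$, Proposition~\ref{growth-prop} gives that $\sigma$ has infinite growth type on $\mathbb{A}^2$, and the Diller--Favre classification (Theorem~\ref{DF-sum-thm}) then excludes any non-constant $\sigma^m$-invariant rational function. Proposition~\ref{unc-irr-thm} produces a dense $\sigma$-orbit, and for any closed point $p$ on it the module $\bigoplus_{n\in\BbZ}(S/\mathfrak{m}_p)t^n$ is a faithful simple right $T$-module (as in Example~\ref{T-prim-char}), so $(0)$ is primitive. Theorem~\ref{inv-divisor-thm} further bounds $\sigma$ to finitely many $\sigma$-periodic curves. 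To finish I need to show $(0)$ is not locally closed, i.e., that the intersection of all nonzero primes of $T$ vanishes; I plan to do so by exhibiting countably many nonzero homogeneous primes $P_i\subset T$ with $\bigcap_i P_i=0$.

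The key input, which is the main obstacle, is the dynamical statement that a polynomial automorphism of $\mathbb{A}^2$ of infinite growth type admits infinitely many $\sigma$-periodic orbits not lying on any periodic curve. I plan to deduce this from the Friedland--Milnor classification of polynomial automorphisms of $\mathbb{A}^2$---any such $\sigma$ is, up to conjugacy, a composition of generalized H\'enon maps---combined with classical results on H\'enon dynamics (Bedford--Lyubich--Smillie and others) giving exponential growth of the number of isolated periodic points; this ingredient is non-algebraic in nature and must be imported from outside the paper's framework. Granted such a family $\{\mc{O}_i\}_{i\ge 1}$, Lemma~\ref{skew-primes-lem} associates to each $\mc{O}_i$ a nonzero homogeneous prime $P_i=\bigoplus_n I_i t^n$, where $I_i\subset S$ is the radical ideal of $\mc{O}_i$, and each $\mc{O}_i$ is maximal $\sigma$-irreducible because any strictly larger proper $\sigma$-irreducible subset of $\mathbb{A}^2$ would be a cyclic union of $\sigma$-periodic curves, which by choice excludes $\mc{O}_i$. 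The Zariski closure $\overline{\bigcup_i\mc{O}_i}$ is $\sigma$-invariant; if it were proper, its decomposition into $\sigma$-irreducible pieces would consist of finitely many cyclic unions of $\sigma$-periodic curves and finitely many single periodic orbits, forcing $\{\mc{O}_i\}$ to be finite---a contradiction. Hence $\overline{\bigcup_i\mc{O}_i}=\mathbb{A}^2$, so $\bigcap_i I_i=0$ and $\bigcap_i P_i=0$, which forces $(0)$ not to be locally closed and completes the proof.
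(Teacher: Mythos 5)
Your proposal is correct in outline and takes essentially the same route as the paper: $(1)\Rightarrow(2)$ via Proposition~\ref{growth-prop} and Theorem~\ref{dim2-ord-thm}, $(2)\Rightarrow(3)$ from Theorem~\ref{main-DM-thm}, and $(3)\Rightarrow(1)$ by showing $(0)$ is primitive but not locally closed, with the same external dynamical input (Friedland--Milnor reduction to H\'enon maps plus the count of their isolated periodic points, which the paper takes from Jordan rather than Bedford--Lyubich--Smillie). Two steps need repair. First, Theorem~\ref{DF-sum-thm} as stated classifies only the case $\rho = 1$, so it cannot be cited to exclude non-constant $\sigma^m$-invariant rational functions when the growth type is \emph{infinite}; the correct route (and the one implicit in your own later argument) is that a H\'enon map has no $\sigma$-periodic curves, whence by Theorem~\ref{unc-irr-thm} (the implication from an invariant rational function to uncountably many invariant codimension-one subsets) there is no invariant rational function, and hence a dense orbit and primitivity via Example~\ref{T-prim-char}. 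Second, the dynamical facts you import live over $\mb{C}$, while the theorem is stated over an arbitrary uncountable algebraically closed $k$ of characteristic zero; you must insert the Lefschetz/base-change step (as in the proof of Theorem~\ref{dim2-ord-thm}, using Lemmas~\ref{ac-ext-lem} and \ref{gdo-basechange-lem}) to conclude that $(\mb{A}^2,\sigma)$ fails to have good dense orbits over $k$ itself, and then deduce over $k$ (via Theorem~\ref{unc-irr-thm} and Lemma~\ref{skew-primes-lem}) that there are countably infinitely many maximal $\sigma$-irreducibles and hence countably infinitely many height-one primes with zero intersection. Your closing argument that the closure of the union of the periodic orbits must be all of $\mb{A}^2$ is fine, and is a slightly more hands-on version of the paper's appeal to the count of h-height-one primes; neither buys anything over the other.
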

\begin{proof}
$(1) \implies (2) \implies (3)$ is part of
Theorem~\ref{main-DM-thm}, so we need only prove $(3) \implies (1)$,
for which we prove the contrapositive. Suppose that $\GK T =
\infty$; then $(k(u,v), \sigma)$ has infinite growth type, by
Proposition~\ref{growth-prop}. We will show that $T$ is primitive
but that the ideal $(0)$ is not locally closed. (By the first
paragraph of Lemma~\ref{height-one-lem}, $(0)$ will, however, be
rational.)

We claim that $(\mb{A}^2, \sigma)$ is not ordinary, or equivalently
by Lemma~\ref{curves-lem} that $(\mb{A}^2, \sigma)$ does not have
good dense orbits. By the same Lefschetz principle argument as in
the proof of Theorem~\ref{dim2-ord-thm}, using
Lemma~\ref{gdo-basechange-lem} we may reduce to the case $k =
\mb{C}$.  But the dynamics of polynomial automorphisms of $\mb{C}^2$
are well-understood.  Every such automorphism is conjugate in the
group of polynomial automorphisms either to an \emph{elementary
automorphism} of the form $\tau(z,w) = (\alpha z + p(w), \beta w +
\gamma)$ for some polynomial $p(w)$ and constants $\alpha, \beta$;
or else to a \emph{Henon map}, which is a composition of Henon
automorphisms of the form $\tau(z,w) = (p(z) - aw, z)$ with $\deg
p(z) \geq 2$ \cite[Theorem 2.6]{FM}.  Passing to a conjugate clearly
does not affect the property of having good dense orbits. Now if
$\tau$ is an elementary automorphism, then the degrees of the
polynomials in the coordinates in the formula for $\tau^n$ stay
bounded for all $n$, and it easily follows that such an automorphism
has growth data $\rho = 1, j = 1$ in Definition~\ref{bir-data-def}.
In particular, such automorphisms have finite growth type.  Thus we
may assume without loss of generality that $\sigma$ is a Henon map.
Now the first paragraph of the proof of \cite[Proposition 7.8]{Jo}
shows that $(\mb{A}^2, \sigma)$ has a countably infinite set of
periodic points, but no periodic curves, so clearly $(\mb{A}^2,
\sigma)$ does not have good dense orbits, and the claim is proved.

Working again over our original field $k$, by
Theorem~\ref{unc-irr-thm} $\sigma$ must have a dense orbit, and the
number of its maximal $\sigma$-irreducible subsets is countably
infinite.  Thus $T$ is primitive by Example~\ref{T-prim-char}, and
$T$ must have a countably infinite number of h-height one primes by
Lemma~\ref{skew-primes-lem}; in fact, these are height one primes by
the proof of Lemma~\ref{bc-dichotomy-lem}, and thus  $(0)$ is not
locally closed.
\end{proof}

All of the examples above concerned the skew-Laurent extension $T = S[t, t^{-1}; \sigma]$.  The
skew extension case $U = S[t; \sigma]$ behaves differently in certain ways.  We close by discussing
an example of Jordan in the context of our results.
\begin{example}
\label{U-ex} Consider Theorem~\ref{main-DM-thm}(2), with $S$ a
domain for simplicity. The primitivity of the ring $U = S[t;
\sigma]$ is characterized in \cite{LM}, as follows.  The
automorphism $\sigma: X \to X$ is called \emph{special} if there
exists a proper closed subset $Y \subsetneq X$ such that every
proper irreducible $\sigma$-periodic subset $Z$ of $X$ satisfies $Z
\subseteq \sigma^n(Y)$ for some $n \in \mb{Z}$. By \cite[Theorem
3.10]{LM}, $U$ is primitive if and only if $\sigma$ is special.
\end{example}

Suppose in Example~\ref{U-ex} that $(X, \sigma)$ has good dense
orbits.  If $\sigma$ has no dense orbit, then the automorphism
$\sigma: X \to X$ is non-special (using
Lemma~\ref{amitsur-trick-lem}). If $\sigma$ has a dense orbit, then
clearly $\sigma$ is special (take $Y$ to be the union of the
finitely many maximal $\sigma$-irreducibles.) Thus in this case, $U
= S[t; \sigma]$ is primitive if and only if $\sigma$ has a dense
orbit, just as in the case of the skew-Laurent ring $T$. However, in
the cases where $(X, \sigma)$ does not have good dense orbits (for
example, the Henon maps of $X = \mb{A}^2$ as in
Theorem~\ref{A2-case-thm}), it seems to be very difficult to verify
whether or not $\sigma$ is special. Due to the work of Jordan, we do
have one interesting example.
\begin{example}
Let $k = \mb{C}$ and $S = \mb{C}[u,v, u^{-1}, v^{-1}]$, with
$\sigma: S \to S$ defined by $\sigma(u) = v, \sigma(v) = uv^{-1}$.
Then \cite[Propositions 7.11-13]{Jo} shows that $X$ has no
$\sigma$-periodic curves, a countably infinite set of
$\sigma$-periodic points, and that $\sigma$ is \emph{not} special.
Thus, as is also pointed out in \cite[Example 3.11]{LM}, for this
automorphism we have that $U = S[t ; \sigma]$ is not primitive,
whereas $T = S[t, t^{-1}; \sigma]$ is primitive.  Clearly $(\spec S,
\sigma)$ does not have good dense orbits, and so necessarily
$\sigma$ has infinite growth type (Theorem~\ref{dim2-ord-thm}) and
$T$ and $U$ have exponential growth  by
Proposition~\ref{growth-prop}.

We note that the Dixmier-Moeglin equivalence fails for both rings
$T$ and $U$, but in different ways.  Since the two rings have the
same Goldie quotient ring, and we saw that $(0)$ is a rational ideal
of $T$ in Theorem~\ref{A2-case-thm}, $(0)$ is a rational ideal of
$U$. Also, a similar argument as in the last sentence of
Theorem~\ref{A2-case-thm} shows that since $\sigma$ has a countably
infinite number of maximal $\sigma$-irreducible subsets, then $(0)$
is not locally closed in $\spec U$ either.  To summarize, in $T$,
$(0)$ is primitive, rational, but not locally closed; whereas in
$U$, $(0)$ is not primitive and not locally closed, but it is
rational.
\end{example}

We wonder if every automorphism with infinite growth type on an
affine surface over $k$ must fail to be special.  Even in the case
of $\mb{A}^2$, nothing seems to be known about this question beyond
Jordan's example. The theory of the dynamics of maps of the affine
plane has developed much in recent years; perhaps there are
applications of that rich theory to the question of speciality.

We close with an intriguing question.
\begin{question}
\label{GK-DM-ques}
Theorem~\ref{main-DM-thm} is quite suggestive
that there may be a deeper relationship between the growth of an
algebra and the Dixmier-Moeglin equivalence. Is there some more
general ring-theoretic argument that shows that algebras of finite
GK-dimension (perhaps with certain extra hypotheses) have an \ord\
prime spectrum?
\end{question}
This seems to be a deep question; a positive answer would imply, for
example, that a finitely generated noetherian k-algebra $A$ over an
uncountable field $k$, which is a domain of GK-dimension $2$, is
either PI or primitive.  (This is the well-known Dichotomy
Conjecture of Small.)  For, if $A$ has finitely many height one
primes, it is primitive by Lemma~\ref{height-one-lem}.  On the other
hand, if $A$ has uncountably height one primes, then there are
infinitely many such, say  $P_1, P_2, \dots$, such that the
PI-degrees of the GK-1 factor rings $A/P_i$ are all equal to a fixed
integer $d$. Then as $\bigcap P_i = 0$, $A$ embeds in $\prod A/P_i$
and thus is PI of degree $d$ as well.

\providecommand{\bysame}{\leavevmode\hbox
to3em{\hrulefill}\thinspace}
\providecommand{\MR}{\relax\ifhmode\unskip\space\fi MR }
\providecommand{\MRhref}[2]{%
  \href{http://www.ams.org/mathscinet-getitem?mr=#1}{#2}
} \providecommand{\href}[2]{#2}

\end{document}